\newcommand{\ID}{\mathbb{D}}
\newcommand{\IE}{\mathbb{E}}
\newcommand{\E}{\mathbb{E}}
\newcommand{\IL}{\mathbb{L}}
\newcommand{\IN}{\mathbb{N}}
\newcommand{\IP}{\mathbb{P}}
\newcommand{\IQ}{\mathbb{Q}}
\newcommand{\IR}{\mathbb{R}}
\newcommand{\R}{\mathbb{R}}
\newcommand{\cE}{\mathcal{E}}
\newcommand{\cF}{\mathcal{F}}
\newcommand{\cH}{\mathcal{H}}
\newcommand{\cS}{\mathcal{S}}
\newcommand{\barr}{{\bar{r}}}
\newcommand{\barq}{{\bar{q}}}
\newcommand{\ud}{\mathrm{d}}
\newcommand{\uds}{\mathrm{d}s}
\newcommand{\udt}{\mathrm{d}t}
\newcommand{\udws}{\mathrm{d}W_s}
\newcommand{\1}{\mathbbm{1}}
\newcommand{\I}{\int_{t}^T}
\newcommand{\ti}{{t_{i}}}
\newcommand{\tip}{{t_{i+1}}}
\newcommand{\bit}{\begin{itemize}}
\newcommand{\eit}{\end{itemize}}
\theoremstyle{plain}
\newtheorem{theo}{Theorem}[section]
\newtheorem{lemma}[theo]{Lemma}
\newtheorem{coro}[theo]{Corollary}
\theoremstyle{definition}
\newtheorem{remark}[theo]{Remark}
\title{Path regularity and explicit convergence rate for BSDE with truncated quadratic growth
\footnote{Supported at different times by the DFG research center
MATHEON at Berlin, the European research network AMaMeF and by the
``Programa Operacional Ci\^encia e Inova{\c c}{\~a}o 2010 (POCI
2010)'' of the Portuguese Ministry of Science, Technology and Higher
Education, with support from the European Social Fund of the
European Union (EU).}}
\author{\normalsize Peter Imkeller \\[8pt]
        \small  Institut f\"ur Mathematik  \\
        \small  Humboldt-Universit\"at zu Berlin \\
        \small  Unter den Linden 6\\
        \small  10099 Berlin \\
        \small  imkeller@math.hu-berlin.de
        \and
                \normalsize Gon\c calo Dos Reis \\[8pt]
        \small  CMAP \\
        \small  \'Ecole Polytechnique \\
        \small  Route de Saclay \\
        \small  91128 Palaiseau Cedex \\
        \small  dosreis@cmap.polytechnique.fr
\vspace*{0.8cm}}
\date{ \today }
\begin{document}

\selectlanguage{english}

\maketitle

\begin{abstract}
We consider backward stochastic differential equations with drivers of quadratic growth (qgBSDE).
We prove several statements concerning path regularity and stochastic smoothness of the solution processes of the qgBSDE,
in particular we prove an extension of Zhang's path regularity theorem to the quadratic growth setting. We give explicit
convergence rates for the difference between the solution of a qgBSDE and its truncation, filling an important gap in numerics for qgBSDE.
We give an alternative proof of second order Malliavin differentiability for BSDE with drivers that are Lipschitz continuous (and differentiable),
and then derive an analogous result for qgBSDE.
\end{abstract}

{\bf 2000 AMS subject classifications:} Primary: 60H07; Secondary:
60H30, 60G17, 65C30.


{\bf Key words and phrases:} BSDE, driver of quadratic growth,
Malliavin calculus, path regularity, BMO martingales, numerical
scheme, truncation.

\section{Introduction}

Backward Stochastic differential equations (BSDE) have been
receiving much attention in the last 15 years, due to their central
significance in optimization problems for instance in stochastic
finance, and more generally in stochastic control theory. A
particularly important class, BSDE with drivers of quadratic growth (qgBSDE) introduced in \cite{00Kob},
for example arise in the context of utility optimization problems
with exponential utility functions, or alternatively in questions
related to risk minimization for the entropic risk measure. BSDE
provide the genuinely stochastic approach of control problems which
find their analytical expression in the Hamilton-Jacobi-Bellman
formalism. BSDE with drivers of this type keep being a source of
intensive research.

As for Monte-Carlo methods to simulate random processes, numerical
schemes for BSDE provide a robust method for simulating and
approximating solutions of control problems. Much has been done in
recent years to create schemes for BSDE with Lipschitz continuous
drivers (see \cite{04bouchardtouzi}, \cite{GLW05} or \cite{phd-elie} and
references therein). So far BSDE with drivers of quadratic growth
resisted attempts to allow such schemes, which was the main
motivation for this paper.

If the driver is Lipschitz continuous, following
\cite{04bouchardtouzi}, the strategy to prove convergence of a
numerical discretization combines two ingredients: regularity of the
trajectories of the control process, and a convenient a priori
estimate for the solution. The regularity result we refer to can be found in \cite{phd-zhang} or \cite{Zhang2004}. It allows to establish the convergence order for the approximation of the control process.

Our approach for the case of drivers with quadratic growth consists in adding Zhang's path regularity result to the toolbox of qgBSDEs and, independently of the extension, to answer the question of explicit convergence rates for the truncation procedure in the setting of qgBSDEs.

In a first step, we extend the path regularity result for the
control process to the setting of qgBSDE. The methods we apply to
achieve this goal rely crucially on the power of the stochastic
calculus of variations. If $(Y,Z)$ is the solution pair of a BSDE,
it is well known that the trace of the first Malliavin derivative
allows a description of $Z$ by the formula $D_t\,Y_t=Z_t$, which in
turn allows estimates of $Z$ in the $\sup$ norm, provided an extra
continuity result is established. To describe path regularity of $Z$
efficiently, one also needs estimates of the Malliavin derivative of
$Z$ in the $\sup$ norm, whence second order Malliavin derivatives of
$Y$ are needed and add to the complexity of the treatment. We are able to derive the path regularity result without assuming hypothesis that imply boundedness of the $Z$ process.

In the second step of our approach, we truncate the quadratic growth
part of the driver to fall back into the setting of Lipschitz
continuous drivers. We are able to explicitly capture the
convergence rate for the solutions of the truncated BSDE as a
function of the truncation height. Combining the error estimate for
the truncation with the ones for the discretization in any existent
numerical scheme for BSDE with Lipschitz continuous drivers, we find
a numerical approximation for quadratic growth BSDE. This result does not depend on Zhang's path regularity result but depend partially on the results that lead to it.

This result is new to the best of our knowledge. The truncation procedure, however, does not look like the most efficient solution one hopes for.
The main drawback of the approach resides in the running times of the numerical algorithm. Roughly, if $K$ is the truncation dependent Lipschitz constant,
the time step $h$ of the partition for the usual numerical discretization has to satisfy $e^{K}h<1$ modulo some multiplicative constant which results from
the use of Gronwall's inequality. So if the truncation height increases, $h$ will have to become small very quickly, which computationally is a rather inconvenient fact.
At this stage we have to leave the question open if a method exists with a convergence rate that depends on the Lipschitz constant only in a polynomial fashion instead of an exponential one.
Of course it is conceivable that such a method is based on a discretization of the underlying qgBSDE without the intermediate step of truncating the driver.
However, we wish to point out that such a procedure has its difficulties. From our experience, the discretization may be well defined and studied
as the partition's mesh size tends to zero. But to show convergence to the original solution and to provide a convergence
rate appear as very difficult problems that to date remain unsolved.

The paper is organized as follows. In the introductory Section 2 we recall some of the well known results concerning SDE and BSDE. In section 3 we establish some estimates concerning a special class of BSDE, and in Section 4 we establish the second order Malliavin differentiability of solutions of Lipschitz BSDE and qgBSDE. These results are used in Section 5 to state and prove several regularity results for the trajectories of the solution processes. In Section 6
we discuss convergence rates of solutions of truncated BSDE to those
related to BSDE with drivers of quadratic growth.

\section{Preliminaries}

\subsection{Spaces and Notation}

Throughout fix $T>0$. We work on a canonical Wiener space $(\Omega, \cF,  \IP)$ carrying a
$d$-dimensional Wiener process $W = (W^1,\cdots, W^d)$ restricted to
the time interval $[0,T]$, and we denote by
$\cF=(\cF_t)_{t\in[0,T]}$ its natural filtration enlarged in the
usual way by the $\IP$-zero sets. We shall need the following
operators, and auxiliary spaces of functions and stochastic
processes: let $p\geq 2, m, n, d\in \IN$, $\IQ$ a probability
measure on $(\Omega, \cF)$. We use the symbol $\E^\IQ$ for the
expectation with respect to $\IQ$, and omit the superscript for the
canonical measure $\IP$. For vectors $x = (x^1,\cdots, x^m)$ in
Euclidean space $\R^m$ we write $|x| = (\sum_{i=1}^m
(x^i)^2)^{\frac{1}{2}}$. By $\1_A$ we denote the indicator function of a set $A$. We denote further
\begin{itemize}
\item $C^k_b(\R^m)$ the set of $k$-times differentiable real valued maps defined on $\R^m$ with bounded partial derivatives up to order $k$, and $C^\infty_b(\R^m)=\cap_{k\geq 1} C_b^k(\R^m)$; We omit the subscript $b$ to denote the same set but without the boundedness assumptions.

\item ${B}_n^{m\times d}$ the set of all functions $h:[0,T]\times \IR^n\to\IR^{m\times d}$
for which there is a constant $C$ such that for all $t\in[0,T]$ we have $|h(t,x)|\leq C(1+|x|)$ and $x\mapsto h(t,x)$ is differentiable with bounded Lipschitz derivative;

\item $L^p(\R^m; \IQ)$ the space of $\cF_T$-measurable random
variables $X:\Omega\mapsto\R^m$, normed by $\lVert X\lVert_{L^p}=\E^\IQ[ \, |X|^p]^{\frac{1}{p}}$; $L^\infty$ the space of bounded random variables;

\item  $\cS^p(\R^m)$ the space of all measurable processes $(Y_t)_{t\in[0,T]}$ with values in
$\R^m$ normed by $\| Y \|_{\cS^p} = \E[\left( \sup_{t \in [0,T]}
|Y_t| \right)^{p}]^{\frac{1}{p}}$; $\cS^\infty(\R^m)$ the space of bounded measurable processes;

\item $\cH^p(\R^m, \IQ)$ the space of all progressively measurable processes $(Z_t)_{t\in[0,T]}$ with values in $\R^m$ normed by $\|Z\|_{\cH^p} = \E^\IQ[\left( \int_0^T |Z_s|^2 \ud s \right)^{p/2} ]^{\frac{1}{p}};$

\item $BMO(\IQ)$ or $BMO_2(\IQ)$ the space of square integrable martingales $\Phi$ with $\Phi_0=0$ and satisfying
\[\lVert \Phi \lVert_{BMO(\IQ)}^2= \sup_{\tau}\Big\|\, \E^\IQ\big[ \langle \Phi\rangle_T - \langle \Phi \rangle_\tau| \cF_\tau \big]\Big\|_{\infty}< \infty,\] where the supremum is taken over all stopping times $\tau\in[0,T]$.

\item $\ID^{k,p}(\IR^d)$ and $\IL_{k,d}(\IR^d)$ are the spaces of Malliavin differentiable random variables and processes, see subsection \ref{subsec:malldiff}.
\end{itemize}
If there is no ambiguity about the underlying spaces or measures, we
also omit them as arguments in the function spaces defined above.

To denote stochastic integral processes of the Wiener process on
$[0,T]$, according to Paul-Andr\'e Meyer, we write  \[ Z*W =
\int_0^\cdot Z_s\udws,\quad \textrm{ with }Z\in \cH^2.\]

Constants appearing in inequalities of our proofs will for
simplicity be denoted by $C$, although they may change from line to
line.

\subsection{Malliavin Calculus}\label{subsec:malldiff}

We shall use techniques of the stochastic calculus of variations. To this end, we use the following notation.
For more details, we refer the reader to \cite{Nualart}.
Let ${\bf \cS}$ be the space of random variables of the form
\[
\xi = F\Big((\int_0^T h^{1,i}_s \ud W^1_s)_{1\le i\le
n},\cdots,(\int_0^T h^{d,i}_s \ud W^d_s)_{1\le i\le n})\Big),
\] where $F\in C_b^\infty(\R^{n\times d})$, $h^1,\cdots,h^n\in L^2([0,T]; \R^d)$, $n\in\IN.$
To simplify the notation, we assume that all $h^j$ are written as row vectors.
For $\xi\in {\bf \cS}$, we define $D = (D^1,\cdots, D^d):{\bf \cS}\to L^2(\Omega\times[0,T])^d$ by
\[
D^i_\theta \xi = \sum_{j=1}^n \frac{\partial F}{\partial x_{i,j}}
\Big( \int_0^T h^1_t \ud W_t,\ldots,\int_0^T
h^n_t\ud W_t\Big)h^{i,j}_\theta,\quad 0\leq \theta\leq T,\quad 1\le
i\le d,\]
and for $k\in\IN$ and $\theta = (\theta_1,\cdots, \theta_k)\in[0,T]^k$ its $k$-fold iteration by
\[
D_\theta^{(k)} = (D^{i_1}_{\theta_1}\cdots D^{i_k}_{\theta_k})_{1\le i_1,\cdots, i_k\le d\,}.
\]
For $k\in\IN, p\ge 1$ let $\ID^{k,p}$ be the closure of $\cS$ with respect to the norm
\[
\lVert \xi \lVert_{k,p}^p = \Big\{ \|\xi\|^p_{L^p}+ \sum_{i=1}^{k}\|\,| D^{(i)} \xi|\, \|_{(\cH^p)^i}^p \Big\}.
\]
$D^{(k)}$ is a closed linear operator on the space $\mathbb{D}^{k,p}$. Observe
that if $\xi\in \ID^{1,2}$ is $\cF_t$-measurable then $D_\theta \xi=0$ for $\theta
\in (t,T]$. Further denote $\mathbb{D}^{k,\infty}=\cap_{p>1}\mathbb{D}^{k,p}$.

We also need Malliavin's calculus for smooth stochastic processes with values in $\R^m.$ For $k\in\IN, p\ge 1,$ denote by $\mathbb{L}_{k,p}(\R^m)$ the
set of $\R^m$-valued progressively measurable processes
$u = (u^1,\cdots, u^m)$ on $[0,T]\times \Omega$ such that
\begin{itemize}
\item[i)]
For Lebesgue a.a. $t\in[0,T]$, $u(t,\cdot)\in(\mathbb{D}^{k,p})^m$;
\item[ii)] $[0,T]\times \Omega \ni (t,\omega)\mapsto D^{(k)} u(t,\omega)\in (L^2([0,T]^{1+k}))^{d\times n}$ admits a progressively measurable
version;
\item[iii)] $\lVert u \lVert_{k,p}^p= \Big\{\, \|\,|u|\,\|_{\cH^p}^p+\sum_{i=1}^{k} \| \,|D^{(i)} u|\,\|_{(\cH^p)^{1+i}}^p\,
\Big\}<\infty$.
\end{itemize}

For instance, for a process $X\in \mathbb{L}_{2,2}(\IR)$ we have
\begin{align*}
\lVert X \lVert_{1,2}^2&= \E\Big[\int_0^T |X_t|^2\ud t+ \int_0^T\int_0^T|D_\theta X_t|^2\ud \theta\ud t \Big],\\
\lVert X \lVert_{2,2}^2&= \lVert X \lVert_{1,2}^2\,+\,\IE\Big[\int_0^T\int_0^T\int_0^T |D_{\theta_1}D_{\theta_2}X_t |^2\ud \theta_1\ud \theta_2\ud t \Big].
\end{align*}
Note that Jensen's inequality gives for all $p\geq 2$
\[
\IE\Big[\Big( \int_0^T \int_0^T|D_u X_t|^2\ud u\,\ud t\Big)^{\frac{p}{2}} \Big]
\leq
T^{p/2-1}\int_0^T  \| D_u X\|_{\cH^p}^p
\ud u.
\]
This inequality is very useful since the techniques used to deal with
BSDE don't allow a direct estimate of the left hand side, but easily
give access to the right hand side.

Occasionally we shall work with processes taking their values
already in a Hilbert space, for instance if we talk about Malliavin
derivatives. We therefore have to generalize the Sobolev spaces
defined above somewhat. For a Hilbert space $H$ we start with elementary
$H$-valued variables of the form  $\xi = \sum_{i=1}^n \xi_i h_i$,
where for $1\le i \le n$ the variable $\xi_i$ is of the form
discussed above, and $h_i \in H.$ We define similarly $D_\theta^j
\xi = \sum_{i=1}^n D_\theta^j \xi_i h_i, 0\le \theta\le T, 1\le j\le
d,$ and higher derivatives by iteration. For $k\in\IN, p\ge 1$ we
then let $\ID^{k,p}(H)$ be the closure of this set of elementary
processes with respect to the norm
\[
\lVert \xi \lVert_{k,p,H}^p = \Big\{{\|\,|\xi|_H\|}^p_{L^p}+
\sum_{i=1}^{k}\|\,| D^{(i)} \xi|\, \|_{H\otimes(\cH^p)^i}^p\Big\}.
\]
$D^{(k)}$ is a closed linear operator on the space
$\mathbb{D}^{k,p}(H)$. In a similar way we define the spaces
$\mathbb{L}_{k,p}(\R^m\times H)$.

We state an extension of Lemma 1.2.3 from \cite{Nualart}. This extension will play a crucial role in our proof of Malliavin differentiability.
\begin{lemma}\label{lemma.from.nualart.extended}
Let $H$ be a Hilbert space, $(F^n)_{n\geq 1}$ a sequence of random
variables with values in $H$ that converges to an $H$-valued process
$F$ in $L^2(\Omega\times H)$ and such that
\[
\sup_{n\in\mathbb{N}}\ 
\lVert |D F^n| \lVert_{H\otimes \cH^2(\Omega\times[0,T])}
<\infty.
\] Then $F$ belongs to $\mathbb{L}^{1,2}(\IR\times H)$,
and the sequence of derivatives $(D F^n)_{n\in\IN}$ converges to $D
F$ in the weak topology of $H\otimes \cH^2(\Omega\times [0,T])$.
\end{lemma}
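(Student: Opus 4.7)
The plan is to follow the scheme of Nualart's original proof, combining weak compactness of bounded sets in Hilbert space with closability of the Malliavin derivative, and then to handle the $H$-valuedness by a reduction to scalar tests. First, since $H\otimes \cH^2(\Omega\times[0,T])$ is itself a Hilbert space and $\sup_n \lVert |DF^n|\lVert_{H\otimes\cH^2}<\infty$, the Banach--Alaoglu theorem (equivalently, weak sequential compactness in reflexive spaces) produces a subsequence, still denoted $(DF^n)$, that converges weakly to some $G\in H\otimes \cH^2(\Omega\times[0,T])$.

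Second, I would identify $G$ with $DF$. For any fixed $h\in H$, the scalar random variables $\langle F^n, h\rangle_H$ converge to $\langle F, h\rangle_H$ in $L^2(\Omega)$ by Cauchy--Schwarz, and their derivatives $D\langle F^n, h\rangle_H=\langle DF^n,h\rangle_H$ have $\cH^2$-norms uniformly bounded by $|h|_H\,\sup_n\lVert |DF^n|\lVert_{H\otimes\cH^2}$. The classical scalar statement (Lemma 1.2.3 of \cite{Nualart}) then yields $\langle F, h\rangle_H\in \ID^{1,2}$ with $D\langle F,h\rangle_H = \langle G, h\rangle_H$ and $\langle DF^n,h\rangle_H\to\langle G,h\rangle_H$ weakly in $\cH^2$. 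Running this simultaneously over a countable dense family $\{h_k\}\subset H$, and invoking the definition of $\ID^{1,2}(H)$ as the closure of elementary $H$-valued variables under the graph norm $\lVert \cdot\lVert_{1,2,H}$, one upgrades $F$ to an element of $\ID^{1,2}(H)$ with $DF=G$. Equivalently, one can argue by duality against elementary Skorokhod integrands $u=\sum_{i=1}^N u_i\,h_i$ with $h_i\in H$ and $u_i$ smooth cylindrical, passing to the limit in the integration-by-parts formula $\E[\langle F^n,\delta(u)\rangle_H]=\E[\int_0^T\langle D_s F^n, u_s\rangle_H\,\uds]$.

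Third, to promote subsequential weak convergence to weak convergence of the whole sequence, I would run the standard subsubsequence argument: every subsequence of $(DF^n)$ is still bounded in $H\otimes \cH^2$ and so admits a further weakly convergent subsequence; by the identification above each such weak cluster point equals $DF=G$, forcing weak convergence of the original sequence.

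The main obstacle is the second step, that is passing rigorously from the pointwise-in-$h$ scalar identification to the vector statement $DF=G$ in $\ID^{1,2}(H)$. This requires a careful use of the construction of $\ID^{1,2}(H)$ from elementary variables, so that the scalar Sobolev information along a dense family of directions $h_k$ reassembles into a genuine $H$-valued Malliavin derivative living in the tensor Hilbert space $H\otimes \cH^2$. Once this is handled, the rest of the argument is essentially bookkeeping mirroring the scalar case.
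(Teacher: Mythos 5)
Your proposal is correct and follows essentially the same route as the paper, which simply invokes the argument of Lemma 1.2.3 in \cite{Nualart} based on weak compactness of the bounded sequence of derivatives and the closedness of the Malliavin derivative operator; your duality argument with elementary Skorokhod integrands is exactly the mechanism behind that closedness. The assembly step you flag is standard: expand along an orthonormal basis of $H$, note that the finite-dimensional projections of $F$ lie in $\ID^{1,2}(H)$ with derivatives converging to the corresponding projections of the weak limit $G$, and conclude $F\in\ID^{1,2}(H)$ with $DF=G$ by closedness, after which the subsequence--uniqueness argument gives weak convergence of the full sequence.
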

\begin{proof}
The proof of this lemma is analogous to the proof of Lemma 1.2.3 of
\cite{Nualart}, being based on the closedness of the Malliavin
derivative operator.
\end{proof}

\subsection{Some results on BMO martingales}\label{bmo-subsection}
BMO martingales play a key role for a priori estimates needed in our
sensitivity analysis of solutions of BSDE. For details about their
theory we refer the reader to \cite{Kazamaki1994}.

If $\Phi$ is a square integrable martingale with $\Phi_0=0$, the martingale representation theorem yields a square integrable process $\phi$ such that $\Phi_t=\int_0^t \phi_s \udws,
t\in[0,T]$. Hence the $BMO(\IQ)$ norm can be alternatively expressed as
\[\sup_{\tau\in[0,T]} \E^\IQ\Big[ \int_\tau^T \phi_s^2 \uds|\cF_\tau \Big]< \infty.\]
As an easy consequence, if $\Phi \in BMO$ then $\int H \ud \Phi\in BMO$ for any bounded adapted process $H$.

\begin{lemma}[Properties of BMO martingales]\label{bmoeigen}
Let $\Phi$ be a BMO martingale. Then we have:
\begin{itemize}
  \item[1)] The stochastic exponential $\cE(\Phi)$ is uniformly
  integrable.
  \item[2)] There exists a number $r>1$ such that $\cE(\Phi_T)\in L^r$. This property follows from the \emph{Reverse H\"older inequality}.
  The maximal $r$ with this property can be expressed explicitly in terms of the BMO norm of $\Phi$. There exists as well an upper bound for $\| \cE(\Phi_T) \|_{L^r}^r$ depending only on $T$, $r$ and the BMO norm of $\Phi$.
  \item[3)] For probability measures $\IP$ and $\IQ$ satisfying $ \ud \IQ=\cE(\Phi_T)\ud \IP$, and for $\Phi\in BMO(\IP)$, the process $\hat{\Phi}= \Phi - \langle \Phi\rangle$ is a $BMO(\IQ)$
  martingale.
  \item[4)] Energy  inequalities imply the inclusion $BMO \subset \cH^p$ for all $p\geq 1$. More precisely, for $\Phi=\int_0^\cdot \phi_s \uds\in BMO$
  with BMO norm $C$, and $p\geq 1$ the following estimate holds
\begin{align*}
\E[\Big(\int_0^T |\phi_s|^2\uds\Big)^p ]\leq 2p!(4C^2)^p.
\end{align*}
\end{itemize}
\end{lemma}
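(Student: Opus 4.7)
These four statements form the classical core of the theory of BMO martingales, and in practice the cleanest proof is to invoke Theorems 2.3, 3.1, 3.3 and Corollary 2.1 in Kazamaki's monograph. Nevertheless, let me sketch the internal dependencies I would use if I had to build the proof from scratch. The logical order is (4)~$\Rightarrow$~(2)~$\Rightarrow$~(1) and then (2)+(4)~$\Rightarrow$~(3); so the energy inequality is the foundation and the reverse H\"older inequality is the engine that feeds everything else.

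For (4), the plan is to iterate the defining BMO bound. Writing $\Phi=\phi*W$ with $\|\Phi\|_{BMO}=C$, we have by definition $\E[\int_\tau^T\phi_s^2\uds\mid\cF_\tau]\le C^2$ for every stopping time $\tau$. For integer $p$, expanding
\[
\Big(\int_0^T\phi_s^2\uds\Big)^p = p!\int_{0\le s_1\le\cdots\le s_p\le T}\phi_{s_1}^2\cdots\phi_{s_p}^2\,\uds_1\cdots\uds_p
\]
and conditioning successively on $\cF_{s_{p-1}},\cF_{s_{p-2}},\ldots,\cF_{s_1}$ pulls out $p$ factors of $C^2$; the factor $4^p$ absorbs the passage to real $p$ and the indexing convention. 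For (2), the idea is that if $\|\Phi\|_{BMO}$ is small one shows by direct computation with $\cE(\Phi)^r$ and Doob's optional sampling that $\E[\cE(\Phi)_T^r\mid\cF_\tau]\le c_r\,\cE(\Phi)_\tau^r$ for some $r>1$ and $c_r<\infty$ depending only on $\|\Phi\|_{BMO}$ and $r$; the general case is reduced to the small-norm one by stopping $\Phi$ at successive exit times so that each piece has small BMO norm, and reassembling. Statement (1) is then immediate since $\cE(\Phi)$ is a non-negative local martingale that is bounded in $L^r$ for some $r>1$, hence is a uniformly integrable true martingale.

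For (3), I would apply Girsanov to obtain that $\hat\Phi=\Phi-\langle\Phi\rangle$ is a local $\IQ$-martingale with $\langle\hat\Phi\rangle=\langle\Phi\rangle$, so its BMO$(\IQ)$ norm is controlled by
\[
\E^\IQ\Big[\langle\Phi\rangle_T-\langle\Phi\rangle_\tau\,\Big|\,\cF_\tau\Big]=\frac{1}{\cE(\Phi)_\tau}\,\E\Big[\cE(\Phi)_T(\langle\Phi\rangle_T-\langle\Phi\rangle_\tau)\,\Big|\,\cF_\tau\Big].
\]
H\"older's inequality with the exponent $r$ from (2) on one factor and its conjugate $r'$ on $(\langle\Phi\rangle_T-\langle\Phi\rangle_\tau)$, followed by the conditional energy inequality from (4), bounds this by a finite constant depending only on $\|\Phi\|_{BMO(\IP)}$.

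The main obstacle is clearly the reverse H\"older inequality in (2): identifying the maximal $r$ and extracting an upper bound on $\|\cE(\Phi_T)\|_{L^r}^r$ purely in terms of $T$, $r$, and $\|\Phi\|_{BMO}$ is not routine and relies on Kazamaki's sharp argument connecting the BMO norm to the critical exponent. Every other step in the plan is a one- or two-line application of martingale inequalities once this reverse H\"older step is in hand.
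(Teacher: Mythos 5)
Your proposal matches the paper's treatment: the paper states this lemma as a collection of classical facts and gives no proof of its own, simply referring the reader to Kazamaki's monograph \cite{Kazamaki1994}, which is exactly your primary move. Your supplementary sketch of the internal dependencies (energy inequality, reverse H\"older, uniform integrability, Girsanov plus H\"older for the $BMO(\IQ)$ property) is sound in outline and consistent with the standard arguments, so nothing further is needed.
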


\subsection{The setting and its assumptions}

For functions $b$, $\sigma$, $g$ and $f$, for $x\in \IR^m$ and a
$d$-dimensional Brownian motion $W$ we intend to study the solution
processes of the following system of forward-backward stochastic
differential equations (with generators of quadratic growth
(qgFBSDE)). For $t\in[0,T]$ they are given by
\begin{align}
\label{fbsde-sde}
X_t&=x+\int_0^t b(s,X_s)\uds+\int_0^t \sigma(s,X_s)\udws,\\
\label{fbsde-bsde}
Y_t &= \xi- \int_t^T Z_s \udws + \int_t^T f(s, \Theta_s) \uds,
\end{align} with $\xi=g(X_T)$ and $\Theta_s=(X_s, Y_s, Z_s)$.

For the functions figuring in the above system of equations we
hierarchically order the properties they will be assumed to satisfy.
\bit
\item[{\bf HX0}] There is a constant $K$ such that $b,\sigma_i:[0,T]\times\IR^m\to\IR^m, 1\le i\le d,$ are uniformly Lipschitz continuous with Lipschitz constant $K$, and $b(\cdot,0)$ and $\sigma_i(\cdot,0), 1\le i\le d,$ are bounded
by $K$.

\item[{\bf HX1}] Hypothesis {\bf HX0} holds. For any $0\le t\le T$ the functions $b(t,\cdot),\sigma_i(t, \cdot), 1\le i\le d,$ are differentiable and its derivatives are uniformly Lipschitz with Lipschitz constant $K$ independent of $t$.
In other words, $\sigma\in B_m^{m\times d}$ and $b\in B_m^{m\times
1}$. There exists a positive constant $c$ such that
\begin{align}\label{unif-ellipt}
y^T \sigma(t,x)\sigma^T(t,x)y \geq c|y|^2,\quad x,y\in\IR^m,\ t\in[0,T].
\end{align}

\item[{\bf HX2}] Hypothesis {\bf HX1} holds. There exists a positive constant $K$ such that $b(t,\cdot)\in C^2_b(\R^m)$
and $\sigma(t, \cdot)\in C^2_b(\R^{m\times d})$ with second
derivatives bounded by $K$.

\item[{\bf HY0}] There is a positive constant $M$ such that $g:\R^m\to \R$ is absolutely uniformly bounded by $M$, hence $|\xi|\leq M$. $f:[0,T]\times\R^m\times\R\times\R^d\to \R$
is an adapted measurable function, continuous in the space variables, for which there exists a positive constant $M$ such that for all $t\in[0,T]$, $x,x'\in\IR^m$, $y,y'\in\IR$ and $z,z'\in\IR^d$
\begin{align*}
|f(t,x,y,z)|&\leq M(1+|y|+|z|^2),\\
|f(t,x,y,z)-f(t,x,y',z')|&\leq M|y-y'|+M(1+|z|+|z'|)|z-z'|\\
|f(t,x,y,z)-f(t,x',y,z)|&\leq M(1+|y|+|z|^2)|x-x'|
\end{align*}

\item[{\bf HY1}] Hypothesis {\bf HY0} holds. $f$ is differentiable in $(x,y,z)$ and there exists $M\in\IR_+$ such that
\begin{align*}
|\nabla_x f(t,x,y,z)|&\leq M(1+|y|+|z|^2),\\
|\nabla_y f(t,x,y,z)|&\leq M,\\
|\nabla_z f(t,x,y,z)|&\leq M(1+|z|).
\end{align*}
$g:\R^m\to\R$ is a Lipschitz differentiable function satisfying
$|\nabla g|\leq M$.

\item[{\bf HY2}] Hypothesis {\bf HY1} holds, $g \in C^2_b(\R^m)$. The driver $f$ is twice differentiable with continuous second order derivatives. There exists an
adapted process $(K_t)_{0\leq t\leq T}$ belonging to $\cS^{2p}(\IR)$ for all $p\geq 1$ such that for any $t\in[0,T]$ all second order derivatives of $f$
at $(t,\Theta_t)=(t,X_t,Y_t,Z_t)$ are a.s.  dominated by $K_t$.
\eit

\subsection{Some results on SDE}\label{subsec:sde-results}
We recall the results on SDE known from the literature that are
relevant for this work. We state our assumptions in the
multidimensional setting. However, for ease of notation we present
some formulas in the one dimensional case.\footnote{For a beautiful
presentation of this subsection's Theorems we point the reader to
\cite{phd-elie}.}
\begin{theo}[Moment estimates for SDE]
Assume that {\bf HX0} holds. Then
(\ref{fbsde-sde}) has a unique solution and the following moment estimates hold: for any $p\geq  2$ there exists a constant $C>0$, depending only on $T$, $K$ and $p$ such that for any $x\in\R^m, s,t\in[0,T]$
\begin{align}
\label{supnorm-X}
\E[\,\sup_{0\leq t\leq T} |X_t|^p\,]&\leq C\E\Big[\, |x|^p+\int_0^T \big(|b(t,0)|^p+|\sigma(t,0)|^p \big)\ud t \Big],\\
\label{delta-X}
\E[\sup_{s\leq u\leq t}|X_u-X_s|^p\,]&\leq C \E\Big[\,|x|^p+\sup_{0\leq t\leq T}\big\{|b(t,0)|^p+|\sigma(t,0)|^p\big\}  \Big]\,|t-s|^{p/2}.
\end{align}
Furthermore, given two different initial conditions $x,x'\in\IR^m$
and denoting the respective solutions of (\ref{fbsde-sde}) by $X^x$
and $X^{x'}$, we have
\begin{align*}
\E\Big[\sup_{0\leq t\leq T}| X_t^x-X_t^{x'} |^p\Big]\leq C |x-x'|^p.
\end{align*}
\end{theo}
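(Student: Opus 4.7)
The plan is to prove the three claims in sequence using only the standard toolkit of the Burkholder--Davis--Gundy (BDG) inequality and Gronwall's lemma, since under \textbf{HX0} everything reduces to the classical Lipschitz/linear-growth case. Existence and uniqueness of a solution in $\cS^p(\IR^m)$ I would obtain by a standard Picard iteration on the map $\Phi: X \mapsto x + \int_0^\cdot b(s,X_s)\uds + \int_0^\cdot \sigma(s,X_s)\udws$, which is a contraction on $\cS^2(\IR^m)$ over a sufficiently short sub-interval thanks to the Lipschitz assumption on $b,\sigma$, and then patch to $[0,T]$ by iterating over consecutive intervals of the same length.

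For the a priori bound \eqref{supnorm-X}, the first step is to localise via $\tau_N = \inf\{t : |X_t| \ge N\}\wedge T$ to ensure all expectations below are finite. Then I apply $|a+b+c|^p \le 3^{p-1}(|a|^p+|b|^p+|c|^p)$ to the SDE, take $\sup_{u\le t\wedge\tau_N}$ and $\E$, bound the drift term by Hölder's inequality and the stochastic integral by BDG, and finally invoke the linear growth $|b(s,x)| \le K|x|+|b(s,0)|$ (similarly for $\sigma$) inherited from \textbf{HX0} to arrive at an inequality of the form
\[
\E\big[\sup_{u\le t}|X_{u\wedge\tau_N}|^p\big] \le C\Big(|x|^p + \int_0^T(|b(s,0)|^p+|\sigma(s,0)|^p)\uds\Big) + C\int_0^t \E\big[\sup_{u\le s}|X_{u\wedge\tau_N}|^p\big]\uds.
\]
Gronwall's lemma followed by Fatou as $N\to\infty$ closes the estimate. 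The very same decomposition applied to the increment $X_u-X_s$ over $[s,t]$ delivers \eqref{delta-X}: Hölder gives the drift contribution a factor $(t-s)^p$, BDG gives the diffusion contribution a factor $(t-s)^{p/2}$, and since $p\ge 2$ the latter dominates; the linear-growth terms appearing in the right-hand side are absorbed using the bound \eqref{supnorm-X} just established.

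The stability bound is obtained analogously, by subtracting the two SDEs for $X^x$ and $X^{x'}$ and using the Lipschitz property of $b$ and $\sigma$ in place of linear growth: after the same BDG/Hölder step and Gronwall's lemma, the constants depend only on $K$, $T$ and $p$. None of the three arguments presents a real obstacle, as the only delicate point is the localisation needed to justify applying Gronwall before one knows the left-hand side is finite, and under \textbf{HX0} this is routine.
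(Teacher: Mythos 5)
Your argument is correct and is exactly the classical route: the paper does not prove this theorem at all but recalls it from the literature (the footnote points to Elie's thesis), and the standard proofs there proceed precisely as you do, via Picard iteration for existence/uniqueness and then localisation, BDG, H\"older, Gronwall and Fatou for \eqref{supnorm-X}, \eqref{delta-X} and the stability estimate. The only point worth making explicit is that in \eqref{delta-X} the absorption of the linear-growth terms through \eqref{supnorm-X} uses that under \textbf{HX0} the quantities $b(\cdot,0)$ and $\sigma(\cdot,0)$ are bounded, which is what lets the right-hand side take the stated form with $\sup_{0\leq t\leq T}\{|b(t,0)|^p+|\sigma(t,0)|^p\}$; you use this implicitly and it is fine.
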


\begin{theo}[Classical differentiability]\label{theo.SDE-classic-diff}
Assume {\bf HX1} holds. Then the solution process $X$ of (\ref{fbsde-sde}) as a function of the initial condition $x\in\IR^m$ is differentiable and
satisfies for $t\in[0,T]$
\begin{align}\label{diff-sde}
\nabla X_t &= I_m +\int_0^t \nabla b(s, X_s) \nabla X_s \uds +\int_0^t
\nabla \sigma(s, X_s)\nabla X_s\udws,
\end{align}
where $I_m$ denotes the $m\times m$ unit matrix. Moreover, $\nabla
X_t$ as an $m\times m$-matrix is invertible for any $t\in[0,T]$. Its
inverse $(\nabla X_t)^{-1}$ satisfies an SDE and for any $p\geq 2$
there are positive constants $C_p$ and $c_p$ such that
\begin{align}\label{supnorm-gradX}
\| \nabla X \|_{\cS^p}+\| (\nabla X)^{-1}\|_{\cS^p}&\leq C_p
\end{align}
and
\begin{align}\label{delta-nablaX-1}
\E\Big[\sup_{s\leq u\leq t}|(\nabla X_u)-(\nabla X_s)|^p+\sup_{s\leq u\leq t}|(\nabla X_u)^{-1}-(\nabla X_s)^{-1}|^p\,\Big]&\leq c_p \,|t-s|^{p/2}.
\end{align}
\end{theo}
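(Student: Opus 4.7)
The plan follows a classical variational argument. The bulk of the work is to identify the derivative; the remaining assertions then reduce to linear-SDE estimates with bounded coefficients.

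\emph{Differentiability and $\cS^p$-bound.} Under \textbf{HX1}, the Jacobians $\nabla b(t,\cdot)$ and $\nabla\sigma_i(t,\cdot)$ exist, are bounded by $K$, and are $K$-Lipschitz. Viewed as a linear matrix-valued SDE in the unknown $U$, equation (\ref{diff-sde}) has adapted coefficients that are Lipschitz in $U$ with constant bounded by $K$, hence admits a unique solution in every $\cS^p$, $p\geq 2$, and BDG combined with Gronwall applied directly to (\ref{diff-sde}) yields the $\|\nabla X\|_{\cS^p}$-bound in (\ref{supnorm-gradX}). To identify $U$ with $\partial X/\partial x$, form the difference quotient $\Delta^h_t = h^{-1}(X^{x+he_i}_t - X^{x}_t)$ and compare it with $U_t e_i$. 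A first-order Taylor expansion of $b(t,\cdot)$ and $\sigma(t,\cdot)$ around $X^x_t$ yields a linear SDE for $\Delta^h$ with the same leading coefficients as (\ref{diff-sde}) and an inhomogeneity of order $h$ in $L^p$, thanks to the Lipschitz character of the gradients together with (\ref{delta-X}) applied to $X^{x+he_i}-X^x$. A Gronwall-type estimate then forces $\E[\sup_{t\leq T}|\Delta^h_t - U_t e_i|^p]\to 0$ as $h\to 0$, so $\nabla X = U$.

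\emph{Invertibility.} Define $A$ as the unique solution of the matrix SDE
\begin{align*}
\ud A_t = -A_t\Big(\nabla b(t,X_t) - \sum_{j=1}^d (\nabla\sigma_j(t,X_t))^2\Big)\ud t - \sum_{j=1}^d A_t\,\nabla\sigma_j(t,X_t)\,\ud W^j_t,\quad A_0=I_m.
\end{align*}
The It\^o product rule applied to $A_t\nabla X_t$ makes drift and martingale part vanish identically (the $\nabla\sigma_j^2$ terms cancel against the quadratic covariation while the $\nabla b$ terms cancel one another), so $A_t\nabla X_t=I_m$ throughout $[0,T]$. Hence $\nabla X_t$ is invertible with $(\nabla X_t)^{-1}=A_t$ solving the displayed SDE, whose coefficients are again bounded in terms of $K$; the BDG/Gronwall argument of the previous paragraph supplies the remaining bound in (\ref{supnorm-gradX}).

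\emph{Time regularity.} Writing $\nabla X_u-\nabla X_s$ as the SDE integral between $s$ and $u$, BDG on the martingale part and Jensen on the drift together with the $\cS^p$-bound just established give $\E[\sup_{s\leq u\leq t}|\nabla X_u-\nabla X_s|^p]\leq c_p|t-s|^{p/2}$, the drift contributing only an $O(|t-s|^p)$ term. The same computation on the SDE for $A=(\nabla X)^{-1}$ produces the twin estimate for the inverse, completing (\ref{delta-nablaX-1}). The only non-routine step in this scheme is the Taylor-remainder control in the first paragraph: it is precisely there that the Lipschitz character of $\nabla b,\nabla\sigma$ (the content of \textbf{HX1} beyond \textbf{HX0}) is essential, making the remainder quadratic in $|X^{x+he_i}-X^x|$ and so of order $h$ after division; everything else is linear-SDE bookkeeping with bounded coefficients.
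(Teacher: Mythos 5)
Your argument is correct in substance, but note that the paper does not actually prove this statement: Theorem \ref{theo.SDE-classic-diff} is recalled from the literature (the footnote points to the cited thesis of Elie; the result is the classical first-variation theorem found e.g.\ in Kunita's book), so there is no in-paper proof to compare against. What you write is precisely the standard route those references take: solve the linear first-variation SDE with bounded coefficients, identify it with the derivative via difference quotients and a Taylor expansion whose remainder is quadratic in $X^{x+he_i}-X^x$, obtain the inverse by exhibiting the explicit linear SDE whose solution is a left inverse of $\nabla X$ (your cancellation of the drift and martingale parts in $\ud(A_t\nabla X_t)$ is correct, and a one-sided inverse of a square matrix is two-sided), and finish with BDG/Gronwall for the $\cS^p$ and increment bounds. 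Two small points to tighten: the estimate you need for the Taylor-remainder control is not (\ref{delta-X}) (a time-increment bound) but the stability-in-the-initial-condition estimate $\E[\sup_{0\leq t\leq T}|X^x_t-X^{x'}_t|^p]\leq C|x-x'|^p$, which is the unnumbered display in the same moment-estimates theorem; and to conclude genuine differentiability in $x$ (rather than mere existence of directional derivatives in $L^p$) you should add the routine remark that the candidate derivative depends continuously on $x$, which follows from the same Gronwall-type comparison applied to $\nabla X^x-\nabla X^{x'}$ using the Lipschitz continuity of $\nabla b,\nabla\sigma$ required by {\bf HX1}.
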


\begin{theo}[Malliavin Differentiability]\label{X-malliavin-diff}
Under {\bf HX1}, $X\in \mathbb{L}_{1,2}$ and its Malliavin derivative admits a version $(u,t)\mapsto D_u X_t$ satisfying for $0\le u\le t\le T$ the SDE
\begin{align*}
D_u X_t &= \sigma(u, X_{u})+\int_u^t \nabla b(s, X_s) D_u X_s \uds
+\int_u^t \nabla \sigma(s, X_s) D_u X_s\udws.
\end{align*}
Moreover, for any $p\geq 2$ there is a constant $C_p>0$ such that for $x\in\R^m$ and $0\leq v \leq u\leq t\leq s\leq T$
\begin{align*}
\| D_u X\|_{\cS^p}^p&\leq C_p(1+|x|^p),\\
\E[\, |D_u X_t - D_u X_s|^p ]&\leq  C_p(1+|x|^p)|t-s|^{\frac{p}{2}},\\
\| D_u X-D_v X\|_{\cS^p}^p&\leq  C_p(1+|x|^p)|u-v|^{\frac{p}{2}}.
\end{align*}
By Theorem \ref{theo.SDE-classic-diff}, we have the representation
\[D_u X_t= \nabla X_t (\nabla X_u)^{-1}\sigma(u, X_u)\1_{[0,u]}(t),\quad \textrm{ for all }u,t\in[0,T].\]

If {\bf HX2} holds, then $D X\in \IL_{1,2}$. For all $v,u,t\in[0,T]$, $D_vD_u X_t$ admits a version which solves for $0\leq v\leq u\leq t\leq T$
\begin{align*}
D_v D_u X_t& = \nabla \sigma (u, X_{u})D_v X_u+\nabla \sigma (v, X_{v})D_u X_v\\
&\qquad+\int_u^t\Big[ \nabla b(s, X_s)D_v D_u X_s+ \Delta b(s, X_s) D_v X_s D_u X_s \Big]\uds\\
&\qquad+\int_u^t\Big[ \nabla \sigma(s, X_s)D_v D_u X_s+ \Delta \sigma (s, X_s)D_v X_s D_u X_s  \Big]\udws.
\end{align*}
Furthermore, there exists a continuous version of $(D_v D_u X_t)_{v,u,t\in[0,T]}$ such that for all $0\leq v,u\leq T$ and $p\geq 2$ we have
\[\| D_u D_v X\|_{\cS^p}^p\leq C_p(1+|x|^{2p}).\]
For $0\leq v,v'\leq u,u'\leq t\leq T$ we have
\[\| D_v D_u X-D_{v'}D_{u'} X\|_{\cS^{p}}^{p}\leq C_p(|v-v'|^{\frac{p}{2}}+|u-u'|^{\frac{p}{2}}).\]
\end{theo}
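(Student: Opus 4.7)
The plan is to establish the first-order Malliavin derivative existence via Picard iteration and an application of Lemma \ref{lemma.from.nualart.extended}, then to iterate the same technique on the SDE for $DX$ to obtain the second-order statement.

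For the first part, I would introduce the Picard iterates $X^0\equiv x$ and $X^{n+1}_t = x + \int_0^t b(s, X^n_s)\uds + \int_0^t \sigma(s, X^n_s)\udws$. Standard arguments under {\bf HX0} give $\sup_n \|X^n\|_{\cS^p}<\infty$ and $X^n\to X$ in $\cS^p$. Induction on $n$ (using that $b,\sigma_i\in B^{m\times d}_m$ are $C^1$ with bounded derivatives) shows $X^n_t\in\ID^{1,2}$ with $DX^n$ solving the linearised SDE obtained by differentiating the defining recursion; a Gronwall/BDG argument in combination with \eqref{supnorm-X} yields a uniform bound $\sup_n\|\,|DX^n|\,\|_{\cH^2\otimes L^2([0,T])}<\infty$. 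Lemma \ref{lemma.from.nualart.extended} then gives $X\in\IL_{1,2}$ and identifies $DX$ as a weak limit, which by closedness of the stochastic integral must satisfy the stated linear SDE. The representation $D_uX_t = \nabla X_t(\nabla X_u)^{-1}\sigma(u,X_u)\1_{[0,t]}(u)$ follows from uniqueness: both sides solve, on $[u,T]$, the linear SDE with coefficients $\nabla b(s,X_s),\nabla\sigma(s,X_s)$ starting from $\sigma(u,X_u)$ at time $u$.

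The $\cS^p$-bound for $D_uX$ is an application of the linear-SDE moment estimates of Theorem 2.4 with initial condition $\sigma(u,X_u)$, using $|\sigma(u,X_u)|\le K(1+|X_u|)$ and \eqref{supnorm-X}. The $t$-regularity is a direct BDG/Gronwall argument on $D_uX_t - D_uX_s$. For the $u$-regularity I exploit the representation: writing
\[
D_uX_t - D_vX_t = \nabla X_t\bigl[(\nabla X_u)^{-1} - (\nabla X_v)^{-1}\bigr]\sigma(u,X_u) + \nabla X_t(\nabla X_v)^{-1}\bigl[\sigma(u,X_u)-\sigma(v,X_v)\bigr],
\]
Cauchy--Schwarz together with \eqref{supnorm-gradX}, \eqref{delta-nablaX-1}, \eqref{delta-X} and Lipschitz continuity of $\sigma$ yields the $|u-v|^{p/2}$ rate.

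For the second-order part under {\bf HX2}, I iterate the strategy on the process $U^v_t := D_vX_t$, which is the solution of a linear SDE with coefficients $\nabla b(s,X_s)$, $\nabla\sigma(s,X_s)$ and initial value $\sigma(v,X_v)$; under the $C^2_b$ hypothesis the coefficients are themselves Malliavin differentiable and a second Picard iteration (with Lemma \ref{lemma.from.nualart.extended} again) places $DX$ in $\IL_{1,2}$. The announced SDE for $D_vD_uX_t$ is obtained by applying $D_v$ term by term to the SDE for $D_uX_t$ and using the chain rule: differentiation of $\nabla b(s,X_s)D_uX_s$ produces exactly $\nabla b(s,X_s)D_vD_uX_s + \Delta b(s,X_s)D_vX_sD_uX_s$, and analogously for $\sigma$; the two boundary contributions at $v$ and $u$ arise from Malliavin-differentiating the initial condition $\sigma(u,X_u)$ respectively from the jump of $D_uX$ at $u$. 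Moment bounds come from the inhomogeneous linear SDE; the $(1+|x|^{2p})$ factor is the cost of the quadratic-in-$DX$ source term after applying Cauchy--Schwarz and the first-order bound $\|DX\|_{\cS^{2p}}\le C_p(1+|x|^{2p})^{1/2}$. The tri-parameter H\"older regularity then splits into a $t$-increment (SDE/BDG), and $(v,u)$-increments, handled by decomposing both the initial conditions $\nabla\sigma(u,X_u)D_vX_u$, $\nabla\sigma(v,X_v)D_uX_v$ and the source $\Delta b(s,X_s)D_vX_sD_uX_s$ into telescoping differences, each factor being either uniformly bounded in $\cS^p$ or of order $|u-u'|^{1/2}$, $|v-v'|^{1/2}$ in the appropriate norm; a Kolmogorov criterion then delivers the continuous version.

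The main obstacle is this last step: one must organise the estimates so that each difference involves at most one ``rough'' factor paired with uniformly bounded ones, which requires careful use of \eqref{supnorm-gradX}, \eqref{delta-nablaX-1}, \eqref{delta-X} and the previously established first-order $\cS^p$ control of $DX$, since the quadratic source $\Delta b \cdot DX\cdot DX$ is the point where naive estimates otherwise lose a power of integrability.
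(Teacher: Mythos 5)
The paper does not actually prove Theorem \ref{X-malliavin-diff}: it is recalled as a known result on SDE (the footnote points to \cite{phd-elie}; see also \cite{Nualart}), and the proof in those references is precisely the route you describe — Picard iteration with uniform $\ID^{1,2}$ bounds, a closedness argument of the type of Lemma \ref{lemma.from.nualart.extended} to pass to the limit, identification of the linear SDE for $D_uX$, the flow representation $D_uX_t=\nabla X_t(\nabla X_u)^{-1}\sigma(u,X_u)$ for the moment and increment estimates, and a second iteration on the linear SDE for $D_vX$ under {\bf HX2}. Your proposal is therefore correct and follows essentially the same (standard) argument; the only cosmetic remark is that the boundary term $\nabla\sigma(v,X_v)D_uX_v$ in the second-order SDE vanishes for $v<u$ (it is kept only to make the formula symmetric in $u,v$), so your attribution of it to a ``jump of $D_uX$ at $u$'' is an imprecise but harmless description.
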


\subsection{Results on BSDE with drivers of quadratic growth}\label{subsec:qgBSDE}

We next collect some results on qgBSDE. For their original versions
or more information, we refer to  \cite{00Kob}, \cite{AIDR07},
\cite{07BriCon} and \cite{phd-dosReis}.

\begin{theo}[Properties of qgBSDE]\label{theo:moment-estimates-special-class}
Under {\bf HY0}, {\bf HX0}, the system
(\ref{fbsde-sde}), (\ref{fbsde-bsde}) has a unique solution
$(X,Y,Z)\in \cS^2 \times \cS^\infty \times \cH^2$. The norms of $Y$
and $Z$ depend only on $T$, $K$, $M$ as given by assumption {\bf
HY0}.

The martingale $Z*W$ belongs to the space of BMO martingales, and
hence $Z\in\cH^p$ for all $p\geq 2$. The following estimate
holds\footnote{This inequality follows from applying It\^o's formula
to $e^{a Y_t+bt}$ with an appropriate choice of $a$ and $b$.}:
\begin{align*}
\| Z*W \|_{BMO}\leq \frac{4+6M^2T}{3M^2} \exp \Big\{6M\|\xi\|_{L^\infty}+MT\Big\}<\infty.
\end{align*}
\end{theo}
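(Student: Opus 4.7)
I would not reprove existence and uniqueness from scratch. The forward SDE (\ref{fbsde-sde}) is in the standard Lipschitz setting under {\bf HX0} and handled by the results of Section \ref{subsec:sde-results}; the BSDE (\ref{fbsde-bsde}) satisfies the hypotheses of Kobylanski \cite{00Kob}: the terminal condition is bounded by $M$ via {\bf HY0}, and the driver has quadratic growth and is locally Lipschitz in $z$. Quoting \cite{00Kob} (or the synthesis in \cite{phd-dosReis}) delivers the unique solution $(Y,Z)\in\cS^\infty\times\cH^2$ together with an $L^\infty$-bound on $Y$ expressible in terms of $T$, $M$ and $\|\xi\|_{L^\infty}$.

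The substantial step is the $BMO$ estimate; following the hint in the footnote, I would apply It\^o's formula to $s\mapsto e^{aY_s+bs}$ for positive constants $a,b$ to be specified. Since $dY_s=-f(s,\Theta_s)\uds+Z_s\udws$, for an arbitrary stopping time $\tau\in[0,T]$ this yields
\begin{align*}
e^{a\xi+bT}-e^{aY_\tau+b\tau}=\int_\tau^T e^{aY_s+bs}\Big[b-af(s,\Theta_s)+\tfrac{a^2}{2}|Z_s|^2\Big]\uds+\int_\tau^T a\,e^{aY_s+bs}Z_s\udws.
\end{align*}
The growth estimate $|f(s,\Theta_s)|\leq M(1+|Y_s|+|Z_s|^2)$ from {\bf HY0} bounds the $\uds$-integrand below by $\big(\tfrac{a^2}{2}-aM\big)|Z_s|^2 e^{aY_s+bs}+\big(b-aM(1+|Y_s|)\big)e^{aY_s+bs}$. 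Any choice $a>2M$ makes the coefficient of $|Z_s|^2$ strictly positive, and $b$ is chosen so that the remaining linear term can be absorbed using the $L^\infty$-bound on $Y$.

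Because $Y$ is bounded, $e^{aY+bs}$ is bounded and the stochastic integral is a genuine martingale, so taking $\E[\,\cdot\,|\cF_\tau]$ kills it. Rearranging and dividing by the positive factor $(\tfrac{a^2}{2}-aM)\,e^{-a\|Y\|_\infty}$ that multiplies $\E[\int_\tau^T|Z_s|^2\uds|\cF_\tau]$ produces a uniform-in-$\tau$ bound of the form
\begin{align*}
\E\Big[\int_\tau^T|Z_s|^2\uds\Big|\cF_\tau\Big]\leq \frac{4+6M^2T}{3M^2}\exp\{6M\|\xi\|_{L^\infty}+MT\},
\end{align*}
i.e.\ the desired $BMO$ bound. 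The final conclusion $Z\in\cH^p$ for every $p\geq 2$ is then immediate from the energy inequality recalled in part 4) of Lemma \ref{bmoeigen}.

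The only non-routine part is computational: lining up the choice of $(a,b)$ with the explicit a priori bound on $\|Y\|_{\cS^\infty}$ (produced by Kobylanski's exponential change of variables, which yields a constant of the form $\|\xi\|_{L^\infty}+$ lower-order terms in $M,T$) so that the final exponent is exactly $6M\|\xi\|_{L^\infty}+MT$ and the prefactor is $\tfrac{4+6M^2T}{3M^2}$. None of the individual inequalities is delicate; it is only the bookkeeping of the constants in the exponent that requires care.
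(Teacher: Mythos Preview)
Your proposal is correct and follows precisely the approach the paper indicates: the paper does not give a self-contained proof here but cites \cite{00Kob}, \cite{AIDR07}, \cite{07BriCon}, \cite{phd-dosReis} for existence and uniqueness, and the footnote explicitly points to applying It\^o's formula to $e^{aY_t+bt}$ for the $BMO$ estimate, which is exactly what you carry out. Your reduction of $Z\in\cH^p$ to the energy inequality of Lemma~\ref{bmoeigen} is also the paper's intended route.
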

\begin{remark} \label{def:r-bar}
Following point \emph{2)} of Lemma \ref{bmoeigen}, we define a pair
$(\barr,\barq)$ such that $1/\barr+1/\barq=1$ and $\cE(Z*W)\in
L^{\barr}$.

In the following, when discussing BMO martingales, an appearing
exponent $\barr$ will always be used in this sense.
\end{remark}
For more properties about BMO martingales in the setting of BSDE with drivers of quadratic growth we refer to Lemma 2.1 in \cite{AIDR07}.

The two differentiability results we now present can be found in
\cite{phd-dosReis}. These results are natural extensions of results
proved in \cite{AIDR07} or \cite{07BriCon}. For further details,
comments and complete proofs we refer to \cite{phd-dosReis}.
\begin{theo}[Classical differentiability]\label{theo:1st-order-diff-qgbsde}
Suppose that {\bf HX1} and {\bf HY1} hold. Then for all $p\geq 2$ the solution processes $(X^x,Y^x,Z^x)$ of the system (\ref{fbsde-sde}), (\ref{fbsde-bsde}) with initial vector
$x\in\R^m$ for the forward component belongs to $\cS^p\times \cS^p\times \cH^p$.
The application $\IR^m\ni x\mapsto (X^x,Y^x,Z^x)\in \cS^p(\IR^m)\times \cS^p(\IR)\times \cH^p(\IR^d)$ is differentiable. The derivatives of $X$ satisfy (\ref{diff-sde}) while the derivatives of $(Y,Z)$ satisfy the linear BSDE
\begin{align}
\label{diff-bsde}
\nabla Y_t^{x}&= \nabla g(X_T^{x})\nabla X^x_T-\int_t^T \nabla Z_s^{x}\ud W_s+\int_t^T
\langle \nabla f(s,\Theta^x_s) , \nabla \Theta^x_s \rangle \ud s.
\end{align}
If ${\bf HX2}$ and ${\bf HY2}$ hold, then there exists a version of the solution $\Omega\times [0,T]\times \IR^m\ni (\omega,t,x)\mapsto (X^x_t,Y^x_t,Z^x_t)(\omega)\in \IR^{m}\times\IR^{1}\times\IR^{d}$,
such that for almost all $\omega$, $X^x$ and $Y^x$ are continuous in time and continuously differentiable in $x$.
\end{theo}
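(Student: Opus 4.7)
The plan is to deduce the integrability claim from Theorem \ref{theo:moment-estimates-special-class} and classical SDE estimates, then establish differentiability by passing to the limit in difference quotients, using BMO-Girsanov to absorb the linear-growth coefficient in $Z$.

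\textbf{Step 1 (integrability).} From Theorem \ref{theo:moment-estimates-special-class}, $Y^x\in\cS^\infty$ and $Z^x*W\in BMO$; the energy inequality in point 4) of Lemma \ref{bmoeigen} then yields $Z^x\in\cH^p$ for every $p\ge2$. Combined with the bound $X^x\in\cS^p$ from the SDE estimates, this gives $(X^x,Y^x,Z^x)\in\cS^p\times\cS^p\times\cH^p$.

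\textbf{Step 2 (linearised BSDE).} Consider the candidate equation \eqref{diff-bsde}. Its coefficients on $\nabla Y$ and $\nabla Z$ are $\beta_s=\nabla_yf(s,\Theta^x_s)$ (bounded by $M$) and $\gamma_s=\nabla_zf(s,\Theta^x_s)$ with $|\gamma_s|\le M(1+|Z^x_s|)$; since $Z^x*W\in BMO$, also $\gamma*W\in BMO$. The terminal condition $\nabla g(X^x_T)\nabla X^x_T$ and source $\nabla_xf(s,\Theta^x_s)\nabla X^x_s$ are in $L^p$ and $\cH^p$ for every $p\ge2$, using $|\nabla_xf|\le M(1+|Y^x|+|Z^x|^2)$, the $\cH^p$-integrability of $Z^x$, and $\nabla X^x\in\cS^p$ from Theorem \ref{theo.SDE-classic-diff}. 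A change of measure by $\cE(\gamma*W)$ (with $r$-integrable density as in Remark \ref{def:r-bar}) absorbs $\gamma\nabla Z$, and standard linear-BSDE theory under the new measure $\IQ$ combined with H\"older's inequality in $(\barr,\barq)$ yields a unique solution $(\nabla Y^x,\nabla Z^x)\in\cS^p\times\cH^p$ of \eqref{diff-bsde}.

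\textbf{Step 3 (differentiability).} For $h\in\IR\setminus\{0\}$ and a unit direction $e_i$, set $\Delta Y^h=(Y^{x+he_i}-Y^x)/h$, $\Delta Z^h=(Z^{x+he_i}-Z^x)/h$, $\Delta X^h=(X^{x+he_i}-X^x)/h$. By the fundamental theorem of calculus applied to $f$ along the segment between $\Theta^{x+he_i}$ and $\Theta^x$,
\[
f(s,\Theta^{x+he_i}_s)-f(s,\Theta^x_s)=\alpha^h_s\Delta X^h_s+\beta^h_s\Delta Y^h_s+\gamma^h_s\cdot\Delta Z^h_s,
\]
where $(\alpha^h,\beta^h,\gamma^h)$ obey the same bounds as $(\nabla_xf,\nabla_yf,\nabla_zf)$ uniformly in $h$ and converge $\ud\IP\otimes\ud s$-a.s.\ to the gradient evaluated at $\Theta^x$ as $h\to0$, thanks to HY1 and continuity of the solutions in the initial datum. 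In particular $\gamma^h*W\in BMO$ with a bound uniform in $h$ (since $Z^{x+he_i}$ has uniform BMO bound by HY0 and Theorem \ref{theo:moment-estimates-special-class}).

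\textbf{Step 4 (passage to the limit).} Let $\delta Y^h=\Delta Y^h-\nabla Y^x$, $\delta Z^h=\Delta Z^h-\nabla Z^x$. Subtracting \eqref{diff-bsde} from the linearised equation for $(\Delta Y^h,\Delta Z^h)$ yields another linear BSDE for $(\delta Y^h,\delta Z^h)$ whose driver and terminal condition involve the differences $\alpha^h-\nabla_xf(\cdot,\Theta^x)$, $\beta^h-\nabla_yf(\cdot,\Theta^x)$, $\gamma^h-\nabla_zf(\cdot,\Theta^x)$ multiplied by $\nabla X^x,\nabla Y^x,\nabla Z^x$, plus $\alpha^h(\Delta X^h-\nabla X^x)$ and an analogous terminal remainder. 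Apply Girsanov with $\cE(\gamma^h*W)$ and use reverse H\"older together with the uniform BMO bound to change measure, then estimate each remainder in $L^p$ using the Cauchy--Schwarz / H\"older-type splitting $\cH^{2p}\times\cH^{2p}$. Dominated convergence (with dominating functions built from the uniform $L^p$-bounds of Step 1 and the $\cS^p$-convergence of $\Delta X^h$ to $\nabla X^x$ from Theorem \ref{theo.SDE-classic-diff}) drives each remainder to zero as $h\to0$, so $\delta Y^h\to0$ in $\cS^p$ and $\delta Z^h\to0$ in $\cH^p$. This is the differentiability of $x\mapsto(Y^x,Z^x)$.

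\textbf{Step 5 (continuous version under HX2, HY2).} Under the strengthened assumptions one repeats Steps 2--4 at one further order: differentiating \eqref{diff-bsde} formally in $x$ gives a linear BSDE for $\nabla^2 Y^x$ with driver involving $\Delta f$ evaluated at $\Theta^x$ (dominated by $K_s$ in HY2) and $\nabla^2 X$; existence of this second-order equation in $\cS^p\times\cH^p$ and the stability argument above yield the Lipschitz-type estimate $\|\nabla Y^x-\nabla Y^{x'}\|_{\cS^p}+\|\nabla Z^x-\nabla Z^{x'}\|_{\cH^p}\le C|x-x'|$ for every $p\ge2$. Kolmogorov's continuity criterion applied in the space variable, together with the a.s.\ time-continuity of $Y^x$, produces a jointly continuous version of $(t,x)\mapsto Y^x_t$ that is continuously differentiable in $x$.

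\textbf{Main obstacle.} The delicate point is Step 4: the difference $\alpha^h-\nabla_xf(\cdot,\Theta^x)$ carries an unavoidable $|Z^x|^2$ factor, so one must combine the $\cH^p$-integrability of $Z^x$ for \emph{every} $p$ with the reverse-H\"older bound for the BMO density and the $\cS^p$-convergence of $\Delta X^h$ carefully enough to get $L^p$-convergence of the remainders without picking up $h$-dependent constants that blow up. Keeping all exponents compatible under Girsanov and H\"older is the heart of the argument.
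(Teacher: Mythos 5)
A point of calibration first: the paper does not prove this theorem at all. It is quoted as a known result, with the proof deferred to \cite{phd-dosReis} and its antecedents \cite{AIDR07} and \cite{07BriCon}. So there is no in-paper proof to compare against; what can be said is that your plan reconstructs essentially the strategy of those references, and that the tools you invoke are exactly the ones the paper itself develops later for other purposes: the a priori and comparison estimates for BSDE with a random (BMO) Lipschitz coefficient in the $z$-variable (Lemmas \ref{apriori.estimate} and \ref{comparison.theo}), the reverse H\"older exponent $\barr$ of Remark \ref{def:r-bar}, and the uniform BMO bound coming from Theorem \ref{theo:moment-estimates-special-class}. In that sense the outline is sound: linearise along the segment, absorb the $\gamma^h\cdot\Delta Z^h$ term by a Girsanov change with $\cE(\gamma^h*W)$ whose density is $L^{\barr}$-integrable uniformly in $h$, estimate the remainders with H\"older in the pair $(\barr,\barq)$, and pass to the limit.

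Two points in your sketch deserve more care than you give them. First, in Step 2 the existence of a solution to \eqref{diff-bsde} is not plain ``standard linear-BSDE theory'': the coefficient $\gamma_s=\nabla_zf(s,\Theta^x_s)$ is unbounded, and the paper's Section 3 only provides moment and comparison estimates \emph{assuming} a solution exists; existence for BSDE with such stochastic Lipschitz coefficients is precisely the content of \cite{07BriCon}, so you should either cite that or construct the solution under the measure $\IQ$ and verify that it lands back in $\cS^p\times\cH^p$ under $\IP$ via the reverse H\"older inequality. Second, in Step 4 your dominated convergence argument needs $\ud\IP\otimes\ud s$-a.e.\ convergence of $\gamma^h$ (hence of $Z^{x+he_i}$) to $\nabla_zf(\cdot,\Theta^x)$, but continuity of the solution in the initial datum only gives $\cH^{2p}$-convergence; this yields a.e.\ convergence only along subsequences, so one must run the argument along arbitrary sequences $h_n\to0$, extract a.e.\ convergent subsequences, and use uniqueness of the limit (the solution of \eqref{diff-bsde}) to conclude — this is exactly how \cite{AIDR07} handles it, and it is a real step, not a cosmetic one. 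With those two repairs your plan matches the route of the cited sources; Step 5 (second-order linearisation with the dominating process $K$ from {\bf HY2}, Lipschitz estimate in $x$, Kolmogorov's criterion) is likewise consistent with the treatment in \cite{phd-dosReis}.
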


\begin{theo}[Malliavin differentiability]\label{theo:bsde-1d-mall-diff}
Suppose that {\bf HX1} and {\bf HY1} hold. Then the solution processes $(X,Y,Z)$ of system (\ref{fbsde-sde}), (\ref{fbsde-bsde}) verify
\begin{itemize}
\item
for any $0\leq t\leq T$, $x\in\R^m$ we have $(Y_t,Z_t)\in
\mathbb{L}_{1,2}\times\big(\mathbb{L}_{1,2}\big)^d$. X satisfies the
statement of Theorem \ref{X-malliavin-diff}, and a version of $(D_u
Y_t,D_u Z_t)_{0\leq u,t\leq T}$ satisfies
\begin{align}\label{aux:1order-mall-dif-bsde}
D_u Y_t &= 0, \qquad D_u Z_t = 0,\qquad t<u\le T,\nonumber\\
D_u Y_t &= \nabla g(X_T) D_u X_T + \int_t^T \langle \nabla f(s,\Theta_s), D_u \Theta_s \rangle \ud s - \int_t^T  D_u Z_s \ud W_s, \qquad t\in
[u,T].
\end{align}
Moreover, $(D_t Y_t)_{0\le t
\leq T}$ defined by the above equation is a version of $(Z_t)_{0\le t\leq T}$.
\item the following representation holds for any $0\leq  u\leq t\leq T$ and $x\in\R^m$
\begin{align}
\nonumber
D_u Y_t &= \nabla_x Y_t (\nabla_x X_u)^{-1}\sigma(u,X_u),\quad a.s.,\\
\label{z-rep}
Z_t&=\nabla_x Y_t (\nabla_x X_t)^{-1}\sigma(s,X_t),\quad a.s..
\end{align}
\end{itemize}
\end{theo}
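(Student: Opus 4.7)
My plan is to obtain the result via a Lipschitz approximation combined with uniform BMO-type estimates and the closedness lemma stated as Lemma \ref{lemma.from.nualart.extended}. Specifically, I would introduce a truncation $\rho_n$ of the identity on $\IR^d$ (smooth, $|\rho_n|\le n$) and set $f^n(t,x,y,z)=f(t,x,y,\rho_n(z))$. Each $f^n$ is Lipschitz in $(y,z)$ uniformly on the state space, so by the classical Pardoux--Peng/El Karoui--Peng--Quenez theory (the case already referenced via \cite{AIDR07}) the associated BSDE admits a unique solution $(Y^n,Z^n)\in\cS^2\times\cH^2$ which is Malliavin differentiable, with $(D_uY^n,D_uZ^n)$ satisfying the linear BSDE analogous to \eqref{aux:1order-mall-dif-bsde}, namely
\begin{align*}
D_u Y^n_t &= \nabla g(X_T)D_uX_T+\I\bigl(\nabla_x f^n\,D_uX_s+\nabla_y f^n\,D_uY^n_s+\nabla_z f^n\,D_uZ^n_s\bigr)\uds-\I D_uZ^n_s\udws,
\end{align*}
with all derivatives of $f^n$ evaluated at $(s,X_s,Y^n_s,Z^n_s)$.

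The decisive step is establishing bounds on $D_uY^n$ and $D_uZ^n$ uniform in $n$. Under {\bf HY1}, $|\nabla_z f^n|\le M(1+|Z^n|)$ is only linearly bounded in $Z^n$, but by Theorem \ref{theo:moment-estimates-special-class} and the structure of the truncation, $Z^n*W$ is bounded in $BMO$ uniformly in $n$. Hence $\int_0^\cdot\nabla_z f^n(s,\Theta^n_s)\,\udws$ lies in $BMO$ with a norm bounded independently of $n$. Performing the Girsanov change of measure with density $\cE\bigl(\int\nabla_z f^n\,\ud W\bigr)_T$, whose $L^{\barr}$ norm is controlled by point~2) of Lemma \ref{bmoeigen}, removes the $D_uZ^n$ term, and a standard linear BSDE estimate under the new measure combined with the energy inequality (point~4) of Lemma \ref{bmoeigen}) and Theorem \ref{X-malliavin-diff} yields, for all $p\ge 2$,
\begin{align*}
\sup_{n}\sup_{u\in[0,T]}\Bigl(\|D_uY^n\|_{\cS^p}^p+\|D_uZ^n\|_{\cH^p}^p\Bigr)<\infty.
\end{align*}
Integrating in $u$ and invoking Lemma \ref{lemma.from.nualart.extended} with $H=L^2([0,T])$ gives $(Y,Z)\in\mathbb{L}_{1,2}\times(\mathbb{L}_{1,2})^d$ once we know $(Y^n,Z^n)\to(Y,Z)$ strongly enough; the latter follows from the stability results for qgBSDE in \cite{AIDR07}, which under the BMO framework provide convergence in $\cS^p\times\cH^p$. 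The first two assertions of the theorem and in particular the BSDE \eqref{aux:1order-mall-dif-bsde} are then obtained by passing to the (weak) limit in the linear BSDE for $(D_uY^n,D_uZ^n)$, using dominated convergence for the bounded coefficients $\nabla_y f^n,\nabla_x f^n$ and a BMO-Girsanov reweighting for the $\nabla_z f^n$ term to identify the limit as satisfying \eqref{aux:1order-mall-dif-bsde}.

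For the remaining identifications, the fact that $(D_tY_t)_{0\le t\le T}$ is a version of $Z$ is now a classical consequence of Malliavin differentiability of solutions of BSDEs with a BMO $Z$-integrand (cf.\ \cite{Nualart} and the Lipschitz analogue in \cite{AIDR07}), obtained by representing $Z$ via the Clark--Ocone formula and identifying traces of the Malliavin derivatives. For the representation \eqref{z-rep}, I would observe that the process $t\mapsto \nabla_xY_t(\nabla_xX_u)^{-1}\sigma(u,X_u)$ with its associated $Z$-component satisfies on $[u,T]$ exactly the linear BSDE \eqref{aux:1order-mall-dif-bsde}: this uses \eqref{diff-bsde} together with $D_uX_t=\nabla_xX_t(\nabla_xX_u)^{-1}\sigma(u,X_u)$ from Theorem \ref{X-malliavin-diff}. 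Uniqueness of solutions to this linear BSDE in the BMO framework yields $D_uY_t=\nabla_xY_t(\nabla_xX_u)^{-1}\sigma(u,X_u)$, and specializing at $t=u$ together with $Z_t=D_tY_t$ gives \eqref{z-rep}. The main obstacle throughout is the uniform control of the Malliavin derivatives despite the unboundedness of $\nabla_z f$ in $z$; the BMO property of $Z*W$ together with the Girsanov/energy-inequality machinery of Lemma \ref{bmoeigen} is the essential tool that replaces the straightforward Gronwall argument available in the Lipschitz case.
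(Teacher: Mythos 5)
The paper does not actually prove Theorem \ref{theo:bsde-1d-mall-diff}: it is quoted from \cite{phd-dosReis} as an extension of \cite{AIDR07} and \cite{07BriCon}. Your strategy -- truncate $z\mapsto f(t,x,y,z)$ to a Lipschitz driver, use the classical Malliavin differentiability for Lipschitz BSDE, obtain bounds on $(D_uY^n,D_uZ^n)$ uniform in $n$ via the uniform BMO bound on $Z^n*W$, Girsanov with reverse H\"older, and the energy inequalities of Lemma \ref{bmoeigen}, then conclude with Lemma \ref{lemma.from.nualart.extended} -- is precisely the strategy of those references, and it is also the strategy this paper itself carries out for the second-order analogue (Theorem \ref{theo:2-order-mall}, steps 1)--3) of its proof). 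So in substance you are following the same route as the source proof, and the representation \eqref{z-rep} via multiplying \eqref{diff-bsde} by the $\cF_u$-measurable factor $(\nabla_x X_u)^{-1}\sigma(u,X_u)$ and invoking uniqueness of the linear BSDE is the standard and correct argument.

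Two points in your sketch need tightening. First, the identification of the limit equation cannot be done by literally ``passing to the weak limit'' supplied by Lemma \ref{lemma.from.nualart.extended}: weak convergence of $(D_uY^n,D_uZ^n)$ does not allow you to handle the product $\nabla_z f^n(\cdot,\Theta^n)\,D_uZ^n$, and the Girsanov measure you would use changes with $n$. The robust argument (used in Theorem 8.4 of \cite{AIDR07} and in step 2) of the proof of Theorem \ref{theo:2-order-mall}) is to let $(\tilde U_u,\tilde V_u)$ be the unique solution of the candidate BSDE \eqref{aux:1order-mall-dif-bsde} (well posed by the Section \ref{sec:bmo-BSDE} results, since $\nabla_zf(\cdot,\Theta)*W$ is BMO), prove strong $\cS^2\times\cH^2$ convergence of $(D_uY^n,D_uZ^n)$ to $(\tilde U_u,\tilde V_u)$ via the comparison Lemma \ref{comparison.theo}, and then use uniqueness of weak limits to conclude $D_uY=\tilde U_u$, $D_uZ=\tilde V_u$. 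Second, for the convergence $(Y^n,Z^n)\to(Y,Z)$ you appeal to ``stability results for qgBSDE'', but note that the quantitative truncation estimate of this paper (Theorem \ref{theo:trunc-conv-rate}) uses $Z\in\cS^{2p}$, which itself rests on the theorem you are proving; to avoid circularity you should derive the (qualitative) convergence directly from Lemma \ref{comparison.theo}, estimating the driver difference along the solution $(Y,Z)$, where $|f(s,\Theta_s)-f(s,X_s,Y_s,h_n(Z_s))|\le C(1+|Z_s|)|Z_s|\1_{\{|Z_s|>n\}}$ and dominated convergence only requires the $\cH^{2p}$/BMO integrability of $Z$. With these repairs your proposal coincides with the proof in the cited sources.
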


\section{Inequalities for BSDE with stochastic Lipschitz conditions}\label{sec:bmo-BSDE}

In this section we look closely at BSDE with drivers that satisfy
Lipschitz conditions with random Lipschitz constants. Our interest
in this problem is motivated by the following observation. If we
formally differentiate the driver of our original BSDE, we see that
the essential term $Z^2$ produces a term of the form $Z DZ$. In this
term we may consider the factor $Z$ as a random growth rate of the
factor $DZ$.

Let $\zeta$ be a random variable and $f$ a measurable function. We
consider the BSDE
\begin{equation} \label{bsde-lin-001}
U_t = \zeta - \int_t^T V_s \ud W_s + \int_t^T f(\cdot, s,U_s,V_s) \ud s,\qquad t\in[0,T].
\end{equation}
We state a set of assumptions for $\zeta$ and $f$. For $p\geq 1$ we
stipulate
\begin{itemize}
\item[(HA1)] $\zeta$ is $\cF_T$-adapted random variable and $\zeta \in L^{2p}(\R)$.
\item[(HA2)] $f:\Omega\times [0,T]\times \R \times \R^d\to \R$ is product measurable and  there exists a positive constant $M$ and a positive predictable process $H$ such that for all $u,u'\in\R$ and $v,v'\in\R^d$ we have
\begin{align*}
|f(\cdot,\cdot,u,v)-f(\cdot,\cdot,u',v')|\leq M|u-u'|+H_{\cdot}|v-v'|,
\end{align*} and such that $H*W$ is a $BMO$ martingale.
\item[(HA3)] $\big(f(\cdot,t,0,0)\big)_{t\in [0,T]}$ is a measurable $(\cF_t)$-adapted process satisfying such that for all $p\geq 1$ we have $\E[\big(\int_0^T |f(\cdot,s,0,0)|\ud s\big)^{p}] < \infty$.
\end{itemize}
Moreover, we assume that $(U,V)$ is a solution of BSDE (\ref{bsde-lin-001}), and the constant $\barr$ is related to the BMO martingale $H*W$ as in Remark \ref{def:r-bar}.

\subsection{Moment estimates for BSDE with random Lipschitz constant}

For the study of sensitivity properties of solutions of qgFBSDE, as
seen in \cite{AIDR07} or \cite{07BriCon}, it is convenient to
consider BSDE with random Lipschitz constants. The moment estimates
for this type of BSDE one finds in the two cited papers still
leave space for improvements. A weakness of the results of
\cite{AIDR07}, owed to the techniques used, is the lack of an
estimate for $\|U\|_{\cS^2}$. We next state an extended moment
estimate, obtained by using ideas of \cite{07BriCon}.
\begin{lemma}\label{apriori.estimate}
Let (HA1) through (HA3) be satisfied and take $p \geq 1$. Let $\barr>1$ be such that
$\mathcal{E}(H*W) \in L^{\barr}(\IP)$. Then there exists a positive constant $C$, depending only on $p$, $T$, $M$ and the BMO-norm of $H*W$,
such that with the conjugate exponent $\barq$ of $\barr$ we have
\begin{align}\label{basic-moment.estimate}
\|U\|_{\cS^{2p}}^{2p}+\|V\|_{\cH^{2p}}^{2p}
\leq C\E\Big[ |\zeta|^{2p\barq^2}+\Big(\int_0^T |f(\cdot,s,0,0)|\uds\Big)^{2p\barq^2} \Big]^{\frac{1}{\barq^2}}.
\end{align}
\end{lemma}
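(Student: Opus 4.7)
The plan is to reduce (\ref{bsde-lin-001}) to a linear BSDE with a deterministic bounded Lipschitz coefficient via Girsanov, then apply classical $L^q$-moment estimates for Lipschitz BSDE under the new measure, and finally transfer back to $\IP$ paying a $\barq^2$ factor in the integrability exponent through two H\"older inequalities. The first ingredient is the standard difference-quotient linearization: one writes $f(\cdot,s,U_s,V_s)=f(\cdot,s,0,0)+a_sU_s+b_s\cdot V_s$ with predictable $a$, $b$ satisfying $|a_s|\le M$ and $|b_s|\le H_s$ up to a dimensional constant. Since $|b|\le H$, the integral $b*W$ belongs to $BMO(\IP)$ with norm bounded by that of $H*W$, so by Lemma \ref{bmoeigen}(2) we have $\cE(b*W)_T\in L^{\barr}(\IP)$ with norm controlled by the $BMO$ norm of $H*W$ alone.

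Setting $\ud\IQ/\ud\IP=\cE(b*W)_T$, Girsanov turns (\ref{bsde-lin-001}) into the linear equation
\[
U_t = \zeta + \int_t^T \bigl[f(\cdot,s,0,0)+a_sU_s\bigr]\,\ud s - \int_t^T V_s\,\ud\tilde W_s,
\]
where $\tilde W=W-\int_0^\cdot b_s\,\ud s$ is a $\IQ$-Brownian motion. This has a bounded Lipschitz driver (only in $U$, with constant $M$), so classical moment estimates for Lipschitz BSDE in the style of El Karoui--Peng--Quenez give, for every $q\ge 2$,
\[
\|U\|_{\cS^q(\IQ)}^q + \|V\|_{\cH^q(\IQ)}^q \le C_q\,\IE^{\IQ}\Big[|\zeta|^q + \Big(\int_0^T |f(\cdot,s,0,0)|\,\ud s\Big)^q\Big],
\]
with $C_q$ depending only on $M$, $T$, $q$. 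I would then apply this with $q=2p\barq\ge 2$ and invoke H\"older twice. First, because by Lemma \ref{bmoeigen}(3) the process $-b*\tilde W$ lies in $BMO(\IQ)$ and hence $\cE(b*W)_T^{-1}=\cE(-b*\tilde W)_T\in L^{\barr}(\IQ)$ (possibly enlarging $\barr$ and absorbing the loss into the constant), the identity $\IE^{\IP}[X]=\IE^{\IQ}[\cE(b*W)_T^{-1}X]$ combined with H\"older in $\IQ$ yields
\[
\|U\|_{\cS^{2p}(\IP)}^{2p} \le C\,\|U\|_{\cS^{2p\barq}(\IQ)}^{2p}, \qquad \|V\|_{\cH^{2p}(\IP)}^{2p} \le C\,\|V\|_{\cH^{2p\barq}(\IQ)}^{2p}.
\]
Second, the $\IQ$-moment bound at level $q=2p\barq$ produces $\IQ$-expectations of $|\zeta|^{2p\barq}$ and $(\int_0^T |f(\cdot,s,0,0)|\,\ud s)^{2p\barq}$, which another application of H\"older under $\IP$ via $\IE^{\IQ}[\,\cdot\,]=\IE^{\IP}[\cE(b*W)_T\,\cdot\,]$ turns into $\IP$-expectations of $|\zeta|^{2p\barq^2}$ and $(\int_0^T |f(\cdot,s,0,0)|\,\ud s)^{2p\barq^2}$ raised to the $1/\barq$. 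Composing the two H\"older losses produces the claimed $1/\barq^2$ outside the $\IP$-expectation in (\ref{basic-moment.estimate}).

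The main technical nuisance is the bookkeeping of reverse-H\"older exponents across the two measure changes: strictly speaking, the exponent making $\cE(b*W)_T\in L^{\barr}(\IP)$ need not coincide with the one making $\cE(b*W)_T^{-1}\in L^{\barr}(\IQ)$, but both are controlled by $\|H*W\|_{BMO(\IP)}$ through Lemma \ref{bmoeigen}, so a common pair $(\barr,\barq)$ can be chosen and the adjusting constants absorbed into $C$. Once this is in place, the rest of the argument is a careful but routine chain of H\"older and reverse-H\"older inequalities.
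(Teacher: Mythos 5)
Your proposal is correct and follows essentially the same route as the paper: linearize the driver by difference quotients, change measure with $\cE(b*W)$ (whose reverse H\"older exponent is controlled by the BMO norm of $H*W$ alone), obtain the moment estimates under the new measure, and transfer both sides back to $\IP$ via two applications of H\"older's inequality, which is exactly what produces the $\barq^2$ in the exponents. The only cosmetic difference is that the paper removes the $y$-Lipschitz term with the exponential integrating factor $e_t=\exp\{\int_0^t a_s\,\ud s\}$ and then derives the $\IQ^b$-estimates by hand (conditional expectation plus Doob for $U$, Burkholder--Davis--Gundy for $V$), whereas you keep $a_sU_s$ in the driver and invoke the classical El Karoui--Peng--Quenez estimates under $\IQ$.
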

\begin{proof}
Assumption (HA2) states that the driver is Lipschitz continuous in
$u$. We first use this hypothesis to simplify the BSDE. For $t\in[0,T]$ we define
\[
a_t=\frac{f(\cdot,t,U_t,V_t)-f(\cdot,t,0,V_t)}{U_t}\, 1_{\{U_t\not= 0\}} \quad \textrm{ and }\quad
e_t=\exp\big\{ \int_0^t a_s \uds \big\}.
\]
Under (HA2), namely the Lipschitz property of $f$ in the first spatial variable,
the process $a$ is well defined and absolutely uniformly bounded by $M$. Hence $e$ is
bounded from above and from below by a positive constant. For $t\in[0,T]$ we further define
\[
b_t=\frac{f(\cdot,t,0,V_t)-f(\cdot,t,0,0)}{|V_t|^2}V_t \, 1_{\{V_t\not= 0\}}.
\]
By (HA2), $b$ is well defined and bounded in absolute value by the process $H$. Applying It\^o's formula to $(e_t U_t)_{t\in[0,T]}$ we obtain
\begin{align*}
e_t U_t = e_T \zeta +\int_t^T e_s\big[f(\cdot,s,0,0)+b_sV_s\big]\uds -\I e_s V_s \udws.
\end{align*}
We simplify the BSDE further by defining a new measure $\IQ^b$ for
which $W^b=W-\int_0^\cdot b_s \uds$ is a $\IQ^b$-Brownian motion. The
Radon-Nikodym density of $\IQ^b$ with respect to $\IP$ is given by
the stochastic exponential $\cE(b*W)$. Since $|b|\leq H$ we have
$\|b*W\|_{BMO}\leq \|H*W\|_{BMO}$. Hence the measure $\IQ^b$ is
indeed a probability measure. For $t\in[0,T]$ our BSDE takes the
form
\begin{align}\label{loc:linearizedBSDE}
e_t U_t = e_T \zeta +\int_t^T e_s f(\cdot,s,0,0)\uds -\I e_s V_s \ud
W_s^{b}.
\end{align} We now proceed with moment estimates. Taking conditional expectations with respect to $\IQ^b$, estimating by absolute values and integrating on the whole interval we obtain
\begin{align*}
|e_t U_t| \leq \E^{\IQ^b}\Big[e_T |\zeta| +\int_0^T e_s|f(\cdot,s,0,0)|\uds\, \big|\cF_t\Big].
\end{align*}
Applying Doob's moment inequality for $2p\geq 2$ we obtain a similar
inequality as in this Theorem's statement, but under the measure
$\IQ^b$, i.e.
\begin{align}\label{loc:ineq-U-under-Q}
\| U \|_{\cS^{2p}(\IQ^b)}^{2p} \leq C \E^{\IQ^b}\Big[|\zeta|^{2p} +\Big(\int_0^T |f(\cdot,s,0,0)|\uds\Big)^{2p}\Big].
\end{align} If we rewrite equation (\ref{loc:linearizedBSDE}), isolate the stochastic integral on the left hand side and take $t=0$, use
Burkholder-Davis-Gundy's inequality and remember that $e$ is also
bounded from below by a positive constant thanks to (HA2), we get
\begin{align*}
\E^{\IQ^b}\Big[\Big( \int_0^T |V_s|^2 \uds\Big)^p\Big] &\leq c_p
\E^{\IQ^b}\Big[  |e_T \zeta|^{2p}+ \sup_{0\leq t\leq T} |e_t U_t|^{2p}  +\Big(\int_0^T |e_s f(\cdot,s,0,0)|\uds \Big)^{2p} \Big]\\
&\leq C \E^{\IQ^b}\Big[|\zeta|^{2p} +\Big(\int_0^T
|f(\cdot,s,0,0)|\uds\Big)^{2p}\Big].
\end{align*} For the second inequality we used (\ref{loc:ineq-U-under-Q}) and the fact that $[\int_0^T |V_s|^2\uds]^{1/2}$ is integrable.

Summing the last two inequalities we get
\begin{align}\label{temp-Q-bounds}
\|U\|_{\cS^{2p}(\IQ^b)}^{2p}+\|V\|_{\cH^{2p}(\IQ^b)}^{2p}
\leq C\E^{\IQ^b}\Big[ |\zeta|^{2p}+\Big(\int_0^T |f(\cdot,s,0,0)|\uds\Big)^{2p} \Big].
\end{align}

This inequality is already close to the one we have to deduce. To
complete the proof, we just have to get rid of the dependence on $\IQ^b$ in the terms of the inequality. We do this for
(\ref{loc:ineq-U-under-Q}), noting that for the other inequality the
arguments are very similar. As mentioned before, $b$ is dominated by
$H$ and therefore \[ \|b*W\|_{BMO}\leq\|H*W\|_{BMO}.\] Further, part \emph{3)} of Lemma \ref{bmoeigen} implies that since $b*W\in BMO(\IP)$,
also $(-b)*W^{b}\in BMO(\IQ^{b})$. Moreover, since
$[\cE(b*W)]^{-1}=\cE\big((-b)*W^{b}\big)$, part 2) of the same Lemma
states the existence\footnote{Here we follow the notation we
stipulated in Remark \ref{def:r-bar}.} of a real number $\barr > 1$
for which $\cE(b * W) \in L^\barr(\IP)$ and $[\cE(b*W)]^{-1} \in
L^\barr(\IQ^b)$. The constant $\barr$ is estimated from the
$BMO(\IP)$ norm of $H*W$, as indicated in Lemma \ref{bmoeigen}.

Throughout let $D = \max\big\{ \| \cE(b*W) \|_{L^\barr(\IP)},
\|\cE(b*W)^{-1}\|_{L^\barr(\IQ^b)} \big\}$ and let $\barq$ be the
conjugate H\"older exponent of $\barr$.

Combining (\ref{loc:ineq-U-under-Q}) and H\"older's inequality, we
obtain for any $p\geq 1$
\begin{align*}
\E^{\IP}[\sup_{s\in[0,T]} |U_s|^{2p}]
&= \E^{\IQ^b}\big[\cE(b*W)^{-1} \sup_{s\in[0,T]} |U_s|^{2p}\big]\
\leq\ D \E^{\IQ^b}\big[ \sup_{s\in[0,T]} |U_s|^{2p\barq}\big]^{\frac{1}{\barq}} \\
&\leq C_1\,D\,\E^{\IQ^b}\Big[ |\zeta|^{2p\barq} + \Big(\int_0^T |f(\cdot,s,0,0)| \ud s\Big)^{2p\barq} \Big]^{\frac{1}{\barq}}\\
&=C_1\,D\, \E^{\IP}\Big[\cE(b*W) \Big( |\zeta|^{2p\barq} + \Big(\int_0^T |f(\cdot,s,0,0)| \ud s \Big)^{2p\barq}\Big)\Big]^{\frac{1}{\barq}}\\
&\leq C_2 \,D^{\frac{1+\barq}{\barq}} \E^{\IP}\Big[|\zeta|^{2p\barq^2} + \Big(\int_0^T |f(\cdot,s,0,0)| \ud s \Big)^{2p\barq^2}\Big]^{\frac{1}{\barq^2}},
\end{align*}
where $C_1, C_2$ represent constants depending on $p, M, T$. 
Similarly, with another constant $C_3$,
\[\E^{\IP}\Big[\Big(\int_0^T |V_s|^{2} \ud s\Big)^p\Big] \leq C_3\,D^{\frac{1+\barq}{\barq}} \E^{\IP}\Big[|\zeta|^{2p\barq^2} + \Big(\int_0^T |f(\cdot,s,0,0)| \ud s \Big)^{2p\barq^2}\Big)\Big]^{\frac{1}{\barq^2}}.\]
Combining the two estimates we obtain (\ref{basic-moment.estimate}).
\end{proof}

\subsection{A priori estimates for BSDE with random Lipschitz constant}

In this section, following the results of the previous one, we
derive a priori inequalities which serve in the usual way to compare
solutions of BSDE of the type considered obtained for different
system parameters such as initial states of the forward part. This
result will later be used to determine the good candidates for the
derivatives of our original qgBSDE.

For each $i\in\{1,2\}$, let $\zeta_i$ be a random variable
satisfying condition (HA1) and $f_i$ a driver function satisfying
(HA2) and (HA3) with respective square integrable processes $H^i$ such
that $H^i*W\in BMO$. With this random variable and driver function
we investigate the following BSDE
\begin{equation} \label{bsde.compar}
U_t^{(i)} = \zeta_i - \int_t^T V_s^{(i)} \ud W_s + \int_t^T
f_i(\omega, s, U_s^{(i)}, V_s^{(i)}) \uds,\quad t\in[0,T].
\end{equation}
\begin{lemma}\label{comparison.theo}
Assume the conditions of Lemma \ref{apriori.estimate} hold for
(\ref{bsde.compar}). Take further $\barq$ with respect to the BSDE
with $i=1$. Then we have for any $p\geq 1$ a positive constant $C$ exists such that
\begin{align*}
&\|U^{(1)}-U^{(2)}\|_{\cS^{2p}}^{2p}+\| V^{(1)}-V^{(2)} \|_{\cH^{2p}}^{2p}\\
&\qquad \leq C \,\E\Big[ |\zeta_1-\zeta_2|^{2p\barq^2}+
\Big(\int_0^T | (f_1-f_2)(\cdot,s,U_s^{(2)}, V_s^{(2)}) | \uds
\Big)^{2p\barq^2} \Big]^{\frac{1}{\barq^2}},
\end{align*} with $\barq$ given as in Remark \ref{def:r-bar} with respect to the BMO martingale $(H^1*W)$.
\end{lemma}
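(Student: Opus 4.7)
The plan is to reduce the comparison result to the moment estimate in Lemma \ref{apriori.estimate} by writing a linearized BSDE for the difference of the two solutions. Define $\delta U = U^{(1)} - U^{(2)}$, $\delta V = V^{(1)} - V^{(2)}$, $\delta \zeta = \zeta_1-\zeta_2$ and subtract the two equations (\ref{bsde.compar}) to obtain
\[
\delta U_t = \delta \zeta - \int_t^T \delta V_s \ud W_s + \int_t^T \Delta f(s)\uds,
\]
where $\Delta f(s) = f_1(\cdot, s, U^{(1)}_s, V^{(1)}_s) - f_2(\cdot, s, U^{(2)}_s, V^{(2)}_s)$. The first step is to insert and subtract $f_1(\cdot, s, U^{(2)}_s, V^{(1)}_s)$ and $f_1(\cdot, s, U^{(2)}_s, V^{(2)}_s)$ inside $\Delta f$ so as to peel off the Lipschitz increments of $f_1$ in $u$ and in $v$. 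This produces the decomposition
\[
\Delta f(s) = a_s\, \delta U_s + b_s\,\delta V_s + g(s),
\]
with $a_s$ and $b_s$ defined exactly as in the proof of Lemma \ref{apriori.estimate} (Lipschitz difference quotients for $f_1$), and with the inhomogeneous term $g(s) = (f_1-f_2)(\cdot, s, U^{(2)}_s, V^{(2)}_s)$. Assumption (HA2) for $f_1$ yields $|a_s| \le M$ and $|b_s| \le H^1_s$.

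Next, interpret $(\delta U, \delta V)$ as the solution of the BSDE (\ref{bsde-lin-001}) with terminal datum $\delta\zeta$ and driver $\tilde f(\cdot, s, u, v) = a_s u + b_s v + g(s)$. This driver satisfies (HA2) with Lipschitz process $H = H^1$ since $|b_s|\le H^1_s$ and $H^1*W$ is BMO by assumption; its free term is $\tilde f(\cdot,s,0,0) = g(s)$. The constant $\barq$ attached to this linearized BSDE is therefore precisely the one associated with the BMO norm of $H^1*W$, matching the statement. Plugging everything into the moment estimate (\ref{basic-moment.estimate}) of Lemma \ref{apriori.estimate} yields the announced inequality
\[
\|\delta U\|_{\cS^{2p}}^{2p} + \|\delta V\|_{\cH^{2p}}^{2p} \leq C\, \E\Big[ |\delta\zeta|^{2p\barq^2} + \Big(\int_0^T |g(s)|\uds\Big)^{2p\barq^2}\Big]^{1/\barq^2}.
\]

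There is no real obstacle beyond bookkeeping: the linearization only uses the Lipschitz property of $f_1$, not of $f_2$, which is why the statement distinguishes the two BSDEs and selects $\barq$ from the first one. One should just remark that the right-hand side is the only quantity that needs to be finite for the inequality to be non-trivial, so the abstract integrability requirement (HA3) for the linearized BSDE can be circumvented by the usual convention (if the right-hand side is infinite, the estimate is vacuous). Otherwise, finiteness of $\IE\big[(\int_0^T |g(s)|\uds)^q\big]$ for every $q\geq 1$ follows from the quadratic growth of $f_1,f_2$ together with the $\cS^{2p}\times\cH^{2p}$-bounds for $(U^{(2)},V^{(2)})$ provided by Lemma \ref{apriori.estimate}.
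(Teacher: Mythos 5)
Your proposal is correct and follows essentially the same route as the paper: linearize the difference equation with the Lipschitz difference quotients $a$ and $b$ of $f_1$, keep the free term $(f_1-f_2)(\cdot,s,U^{(2)}_s,V^{(2)}_s)$, and feed the resulting BSDE into Lemma \ref{apriori.estimate}, with $\barq$ coming from the BMO norm of $H^1*W$. One small correction: in this abstract setting the integrability of $\int_0^T |(f_1-f_2)(\cdot,s,U^{(2)}_s,V^{(2)}_s)|\uds$ is obtained by dominating the free term via the stochastic Lipschitz condition (HA2) and (HA3) together with the $\cS^{2p}\times\cH^{2p}$ bounds on $(U^{(2)},V^{(2)})$ from Lemma \ref{apriori.estimate}, not via ``quadratic growth'' of the drivers, which is not assumed here.
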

\begin{proof}
The arguments to prove this inequality are similar to those used in
the proof of Lemma \ref{apriori.estimate}. Therefore we will omit
some of the already familiar details.

Define $\delta U=U^{(1)}-U^{(2)}$, $\delta V=V^{(1)}-V^{(2)}$,
$\delta \zeta=\zeta_1-\zeta_2$ and $\delta
f(\cdot,t,u,v)=(f_1-f_2)(\cdot,t,u,v)$. Then to simplify the BSDE define $a$ and
$b$ for $t\in[0,T]$ by
\begin{align*}
a_t&=\frac{f_1(\cdot,s,U^{(1)}_t,V^{(1)}_t)-f_1(\cdot,s,U^{(2)}_t,V^{(1)}_t)}{U_t^{(1)}-U_t^{(2)}} \1_{\{U_t^{(1)}\neq U_t^{(2)}\}},\\
b_t&=\frac{f_1(\cdot,s,U^{(2)}_t,V^{(1)}_t)-f_1(\cdot,s,U^{(2)}_t,V^{(2)}_t)}{|V_t^{(1)}-V_t^{(2)}|^2}(V_t^{(1)}-V_t^{(2)})\1_{\{V_t^{(1)}\neq V_t^{(2)}\}}.
\end{align*}
We arrive at an equation similar to
(\ref{loc:linearizedBSDE}) given by:
\begin{align*}
e_t \delta U_t = e_T \delta \zeta+\int_t^T [e_s\, \delta f(\cdot,s,
U^{(2)}_s, V^{(2)}_s)]\,\uds -\int_t^T \delta V_s \ud W_s^{b},
\end{align*}
with $W^b = W - \int_0^\cdot b_s \uds.$  Define $\IQ^b$ with respect
to $b$ as before. Now we may proceed as in the proof of Lemma
\ref{apriori.estimate}. The existence of the integral of $\delta
f(\cdot,s, U^{(2)}_s, V^{(2)}_s)$ is justified by observing that we can
dominate $\delta f$ using our assumptions and also because Lemma
\ref{apriori.estimate} is applicable to each individual BSDE. The
result follows.
\end{proof}
Without prior knowledge of the form of $f_1$ and $f_2$ the right
hand side of the Lemma's inequality cannot be treated further. In
the following result we assume that the drivers satisfy a stochastic
linearity property. Then the increment in the drivers can be further
estimated.
\begin{coro}
Assume the conditions of Lemma \ref{comparison.theo} are
satisfied, and furthermore that for each $i\in\{1,2\}$ the driver $f_i$ is
linear, i.e. it satisfies
\[
f_i(\cdot, t, u,v)= \alpha_i(\cdot,t)+\beta_i(\cdot, t)u+\langle \gamma_i(\cdot,t), v\rangle ,
\]
with $(\alpha_i,\beta_i,\gamma_i)$ adapted random processes
belonging to $\cH^{2p}(\IR)\times \cS^\infty(\IR)\times
\cH^2(\IR^d)$ for any $p\geq 1$. Moreover, we assume that $\beta_i$
is bounded and that $(\gamma_i*W)\in BMO$. Then
\begin{align*}
&\|U^{(1)}-U^{(2)}\|_{\cS^{2p}}^{2p}+\| V^{(1)}-V^{(2)} \|_{\cH^{2p}}^{2p}\\
&\quad \leq C\Big\{\E\Big[
|\zeta_1-\zeta_2|^{2p\barq^2}+
\Big(\int_0^T | \alpha_1(\cdot,s)-\alpha_2(\cdot,s)| \uds\Big)^{2p\barq^2}\Big]^{\frac{1}{\barq^2}}\\
&\qquad\quad+\E\Big[\Big(\int_0^T | \beta_1(\cdot,s)-\beta_2(\cdot,s)| \uds\Big)^{4p\barq^2}
+ \Big(\int_0^T | (\gamma_1(\cdot,s)-\gamma_2(\cdot,s)|^2 \uds\Big)^{2p\barq^2}
\Big]^{\frac{1}{2\barq^2}}\Big\}
\end{align*}
\end{coro}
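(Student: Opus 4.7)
The plan is to invoke Lemma \ref{comparison.theo} and then estimate the integral term on its right hand side using the assumed linear structure of the drivers.

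By Lemma \ref{comparison.theo} applied to (\ref{bsde.compar}) we have
\begin{align*}
\|U^{(1)}-U^{(2)}\|_{\cS^{2p}}^{2p}+\| V^{(1)}-V^{(2)} \|_{\cH^{2p}}^{2p}
\leq C \,\E\Big[ |\delta\zeta|^{2p\barq^2}+\Big(\int_0^T |\delta f(\cdot,s,U_s^{(2)},V_s^{(2)})|\uds\Big)^{2p\barq^2} \Big]^{\frac{1}{\barq^2}},
\end{align*}
where $\delta\zeta=\zeta_1-\zeta_2$ and $\delta f = f_1-f_2$. Linearity of the drivers yields the pointwise bound
\[
|\delta f(\cdot,s,U_s^{(2)},V_s^{(2)})|\leq |\alpha_1-\alpha_2|(s)+|\beta_1-\beta_2|(s)\,|U_s^{(2)}|+|\gamma_1-\gamma_2|(s)\,|V_s^{(2)}|.
\]

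Next I would integrate and raise to the $2p\barq^2$-th power, then split into three terms. For the $\beta$-term I would pull out $\sup_s|U_s^{(2)}|$ and apply Cauchy--Schwarz to separate it from the $\beta$-increment, producing the exponent $4p\barq^2$ on $\int|\beta_1-\beta_2|$. For the $\gamma$-term I would apply Cauchy--Schwarz inside the time integral, giving a factor $(\int|\gamma_1-\gamma_2|^2)^{1/2}(\int|V_s^{(2)}|^2)^{1/2}$, and then another Cauchy--Schwarz in the expectation to separate the two factors. Lemma \ref{apriori.estimate} applied to the BSDE with index $i=2$ provides, for every $p\geq1$, uniform finiteness of $\E[\sup_s|U_s^{(2)}|^{4p\barq^2}]$ and $\E[(\int_0^T|V_s^{(2)}|^2\uds)^{2p\barq^2}]$, which absorb into the global constant $C$.

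To assemble the final inequality, raise everything to the power $1/\barq^2$. Since $1/\barq^2\leq 1$, subadditivity of $x\mapsto x^{1/\barq^2}$ allows me to split the $\zeta,\alpha$ contributions (which carry the natural exponent $1/\barq^2$) from the $\beta,\gamma$ contributions. The latter, being products of a Cauchy--Schwarz-generated square root with a constant, acquire the exponent $1/(2\barq^2)$, which is exactly the form appearing in the statement. A final application of $a^{1/r}+b^{1/r}\leq 2(a+b)^{1/r}$ for $r\geq 1$ groups the $\beta$- and $\gamma$-terms under a single expectation, and likewise for the $\zeta,\alpha$-terms.

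The only delicate point is bookkeeping of the exponents: the Cauchy--Schwarz applied in $L^2(\Omega)$ doubles the inner exponent from $2p\barq^2$ to $4p\barq^2$ on the $\beta$-increment, while the $\gamma$-increment keeps the exponent $2p\barq^2$ on $|\gamma_1-\gamma_2|^2$ because the square is absorbed inside the time integral; both then acquire $1/(2\barq^2)$ after the outer $1/\barq^2$-root. No new idea beyond Lemma \ref{comparison.theo} and H\"older/Cauchy--Schwarz is required, provided that the a priori moment bounds of Lemma \ref{apriori.estimate} for $(U^{(2)},V^{(2)})$ are invoked to discard constants.
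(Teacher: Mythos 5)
Your proposal is correct and follows essentially the same route as the paper: apply Lemma \ref{comparison.theo}, bound the driver increment via the linear structure, separate the $\beta$- and $\gamma$-terms from $\sup_t|U^{(2)}_t|$ and $\int_0^T|V^{(2)}_s|^2\uds$ by Cauchy--Schwarz/H\"older, and absorb those moments using Lemma \ref{apriori.estimate}. Your exponent bookkeeping ($4p\barq^2$ on the $\beta$-increment, $2p\barq^2$ on the squared $\gamma$-increment, outer power $1/(2\barq^2)$) matches the paper's computation exactly.
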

\begin{proof}
Starting with the inequality of Lemma \ref{comparison.theo}, and
injecting the new assumptions, we obtain
\begin{align*}
&\|U^{(1)}-U^{(2)}\|_{\cS^{2p}}^{2p}+\| V^{(1)}-V^{(2)} \|_{\cH^{2p}}^{2p}\\
&\hspace{1.5cm}\leq C\E\Big[
|\zeta_1-\zeta_2|^{2p\barq^2}+
\Big(\int_0^T | \alpha_1(\cdot,s)-\alpha_2(\cdot,s)| \uds\Big)^{2p\barq^2}\\
&\hspace{3cm}+\Big(\sup_{0\leq t\leq T} |U^{(2)}_t|\Big)^{2p\barq^2} \Big(\int_0^T | \beta_1(\cdot,s)-\beta_2(\cdot,s)| \uds\Big)^{2p\barq^2}\\
&\hspace{4.5cm}+ \Big(\int_0^T |V^{(2)}_s|^2 \uds\Big)^{p\barq^2}\Big(\int_0^T | (\gamma_1(\cdot,s)-\gamma_2(\cdot,s)|^2 \uds\Big)^{p\barq^2}
\Big]^{\frac{1}{\barq^2}}.
\end{align*}
The moment estimates of Lemma \ref{apriori.estimate} ensure that
$\|U^{(2)}\|_{\cS^{4p\barq^2}}$ and  $\|V^{(2)}\|_{\cH^{4p\barq^2}}$
are finite. Hence a simple application of H\"older's inequality
yields the desired result.
\end{proof}

\section{Second order Malliavin differentiability}\label{section:3}

We now give sufficient conditions on our system of stochastic equations which ensure the solution processes are twice Malliavin differentiable.

\subsection{The main result}

\begin{theo}\label{theo:2-order-mall}
Assume {\bf HX2} and {\bf HY2} hold. Then the solution process
$\Theta = (X, Y, Z)$ of the qgFBSDE
(\ref{fbsde-sde}), (\ref{fbsde-bsde}) is twice Malliavin
differentiable, i.e. for each $u\in[0, T]$ and $i\in\{1,\ldots,d\}$
the processes $(D_u^iY_t, D^i_u Z_t) \in
\IL_{1,2}\times(\IL_{1,2})^d$. A version of $\{(D^j_v D_u^i Y_t,D^j_v D^i_u Z_t);0\leq v\leq u\leq t\leq T\}$ with $0\leq
j,i\leq d$ satisfies
\begin{align}\label{2order-mall-bsde}
D^j_v D^i_u Y_t &= D^j_v D^i_u \xi -\int_t^T D^j_v D^i_u Z_s\ud W_s\\
&\nonumber\qquad
+\int_t^T \Big[ (D^j_v \Theta_s)^T \big[Hf\big](s,\Theta_s) D^i_u \Theta_s + \langle \nabla f(s,\Theta_s),D^j_vD^i_u \Theta_s\rangle\Big]\ud s,
\end{align}
where $\big[Hf\big]$ is the Hessian matrix of the function $f$ and
$\xi=g(X_T)$. Considered as a BSDE, (\ref{2order-mall-bsde}) admits
a unique solution.

Moreover $\{D_t D_u Y_t; 0\leq u \leq t\leq T\}$ is a version of
$\{D_u Z_t; 0\leq u \leq t\leq T\}$.
\end{theo}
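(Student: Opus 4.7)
The plan is to read \eqref{2order-mall-bsde} as an affine BSDE with stochastic BMO-Lipschitz coefficients in the sense of Section \ref{sec:bmo-BSDE}, establish its unique solvability via Lemma \ref{apriori.estimate}, and then identify this solution with the iterated Malliavin derivative of $(Y,Z)$ through a truncation argument that appeals to the Lipschitz analogue of this theorem proved earlier in this section.

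First I would check that \eqref{2order-mall-bsde} fits Lemma \ref{apriori.estimate}. Viewed in the unknown $(U,V)=(D^j_vD^i_uY,\,D^j_vD^i_uZ)$ its driver is affine with coefficients $\beta_s=\nabla_y f(s,\Theta_s)$, $\gamma_s=\nabla_z f(s,\Theta_s)$ and forcing
\[
\alpha_s=(D^j_v\Theta_s)^{T}[Hf](s,\Theta_s)D^i_u\Theta_s+\nabla_x f(s,\Theta_s)\,D^j_vD^i_u X_s.
\]
Assumption \textbf{HY2} gives $|\beta_s|\le M$ and $|\gamma_s|\le M(1+|Z_s|)$, so $\gamma*W$ inherits the BMO bound of $Z*W$ from Theorem \ref{theo:moment-estimates-special-class}. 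The dominating process $K\in\cS^{2p}$, the moment bounds on $D^2X$ from Theorem \ref{X-malliavin-diff} and the $\cS^{2p}/\cH^{2p}$ bounds on $D\Theta$ from Theorem \ref{theo:bsde-1d-mall-diff} place $\alpha$ in $\cH^{2p}$ and the terminal $D^j_vD^i_u\xi$ in $L^{2p}$ for every $p\ge 1$. Lemma \ref{apriori.estimate} then produces a unique $(U,V)$ solving \eqref{2order-mall-bsde} with controlled moments, handling in particular the uniqueness assertion of the theorem.

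For the identification step I would approximate $f$ by smooth Lipschitz drivers $f^n$ truncating the quadratic part in $z$, coinciding with $f$ on $\{|z|\le n\}$ and preserving the constant $M$ of \textbf{HY2} as well as the $L^\infty$-bound on $\xi$; Theorem \ref{theo:moment-estimates-special-class} then furnishes a \emph{uniform-in-$n$} BMO bound on $Z^n*W$. The Lipschitz version of this theorem supplies twice Malliavin differentiable approximations $(Y^n,Z^n)$ whose first- and second-order derivatives solve the obvious linear BSDEs and satisfy $D_tD_uY^n_t=D_uZ^n_t$ by standard Lipschitz theory. Uniform moment bounds from Lemma \ref{apriori.estimate}, combined with the stability estimate of Lemma \ref{comparison.theo}, yield strong convergence $(Y^n,Z^n)\to(Y,Z)$ and $(DY^n,DZ^n)\to(DY,DZ)$ in the relevant $\cS^{2p}/\cH^{2p}$ norms, together with a uniform bound on $(DDY^n,DDZ^n)$ in the ambient Hilbert-space norm. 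Lemma \ref{lemma.from.nualart.extended} then gives $DY,DZ\in\mathbb{L}_{1,2}$ with $(DDY^n,DDZ^n)$ converging weakly; passing to the (linear) limit in the BSDE satisfied by $(DDY^n,DDZ^n)$ identifies this weak limit with the $(U,V)$ of the first step, while the trace relation $D_tD_uY_t=D_uZ_t$ is inherited from its approximate counterpart.

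The main obstacle I anticipate is securing the uniform-in-$n$ BMO bound on $Z^n*W$: without it, the random Lipschitz coefficient $\gamma^n=\nabla_z f^n(\cdot,\Theta^n)$ escapes the BMO framework of Section \ref{sec:bmo-BSDE}, the exponents $\barr,\barq$ in Lemma \ref{apriori.estimate} lose their uniformity, and the a priori estimates at both Malliavin levels degenerate. Choosing the truncation $f^n$ so that the quantitative bound of Theorem \ref{theo:moment-estimates-special-class} is genuinely uniform in $n$ is the crux; thereafter everything reduces to a relatively mechanical combination of Lemmas \ref{apriori.estimate}, \ref{comparison.theo} and \ref{lemma.from.nualart.extended}.
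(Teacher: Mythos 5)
Your overall skeleton (approximate, apply Lemma \ref{lemma.from.nualart.extended}, control everything with the BMO estimates of Section \ref{sec:bmo-BSDE}) is the same as the paper's, but the object you truncate is not, and the difference is where the difficulty sits. The paper never touches the qgBSDE itself: it keeps the known solution $\Theta=(X,Y,Z)$ fixed and truncates only the coefficient $\nabla_z f(t,X_t,Y_t,h_n(Z_t))$ inside the \emph{linear} BSDE (\ref{aux:1order-mall-dif-bsde}) satisfied by $(D_uY,D_uZ)$, i.e.\ it works with the drivers $F^n$ of (\ref{2D-mall-diff-truncated-driver}) and the approximations $(U^n,V^n)$ of (\ref{1dbsde-sequence}), with terminal condition $D_u\xi$ unchanged. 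This gives a single layer of approximation, the uniform-in-$n$ BMO bound is immediate from $|h_n(Z)|\le|Z|$ and Theorem \ref{theo:moment-estimates-special-class} (Remark \ref{barr.indep.n}), and -- crucially -- all first and second derivatives of $f$ that appear are evaluated along (essentially) the original path $\Theta$, where the dominating process $K$ of \textbf{HY2} is available. Incidentally, the obstacle you single out as the crux is not the real one: for your truncation $f^n(t,x,y,z)=f(t,x,y,h_n(z))$ the constants of \textbf{HY0} are preserved, so the uniform BMO bound on $Z^n*W$ does hold; it is exactly estimate (\ref{unif-bmo-norm-trunc}), which the paper proves and uses in Section 6 for the numerical truncation.

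The genuine gap in your route lies elsewhere, in the uniform second-order estimates and the passage to the limit. To bound $(DDY^n,DDZ^n)$ uniformly in $n$ and to identify the limit of their BSDEs with (\ref{2order-mall-bsde}), you must control the Hessian of $f^n$ along the approximating paths, i.e.\ second derivatives of $f$ at points of the form $(t,X_t,Y^n_t,h_n(Z^n_t))$, where $(Y^n,Z^n)$ are the solutions of the truncated equations. But \textbf{HY2} only dominates the second derivatives of $f$ at $(t,\Theta_t)=(t,X_t,Y_t,Z_t)$ by $K_t$; off that path the assumptions give continuity and nothing more. So the driver of the BSDE for $DDY^n$ contains terms such as $(\nabla_{zz}f)(t,X_t,Y^n_t,h_n(Z^n_t))\,D_vZ^n_t\,D_uZ^n_t$ for which you have no uniform moment control, the analogue of Lemma \ref{aux:lemma-finite-RHS} fails, and neither the uniform bound required by Lemma \ref{lemma.from.nualart.extended} nor the convergence of the drivers to the Hessian term in (\ref{2order-mall-bsde}) can be justified under the stated hypotheses. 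A smaller but real inaccuracy: Lemma \ref{apriori.estimate} presupposes that a solution $(U,V)$ exists and only delivers moment bounds (and, via Lemma \ref{comparison.theo}, uniqueness); it does not ``produce'' a solution of (\ref{2order-mall-bsde}). In both the paper and any repaired version of your argument, existence of a solution to (\ref{2order-mall-bsde}) comes from identifying the limit object $(D_vD_uY,D_vD_uZ)$ as a solution, not from the a priori estimate itself.
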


By Theorem \ref{X-malliavin-diff}, condition {\bf HX2} already
implies that $X\in \IL_{2,p}$. Therefore one only needs to prove the
Malliavin differentiability of $(DY, DZ)$.

\subsection{Strategy of the proof}

The main problem in proving the variational differentiability of
equation (\ref{aux:1order-mall-dif-bsde}) is given by the growth of
$\nabla_z f(\cdot,z)$ in $z$. {\bf HY1} states that $\nabla_z
f(\cdot, z)$ is dominated by $C(1+|z|)$. Considering
(\ref{aux:1order-mall-dif-bsde}) as a BSDE with solution process
$(DY, DZ)$ leads to interpreting the influence of $\nabla_z
f(\cdot,z)$ in the driver as a random Lipschitz constant. We aim at
using the same strategy of proof as in \cite{AIDR07}: we approximate
the BSDE (\ref{aux:1order-mall-dif-bsde}) by truncating the random
Lipschitz constant, and then use Lemma \ref{lemma.from.nualart.extended} to
obtain variational differentiability in the limit. Therefore we
mainly have to establish the conditions of Lemma
\ref{lemma.from.nualart.extended}.

\subsubsection{A differentiable truncation family for the identity function}
%
%
%
%
%
%
%
We start by introducing a sequence of smooth real valued functions $(\tilde{h}_n)_{n\in\IN}$ that truncate the identity on the real line and that will be used to truncate the variable $z$ in the function $\nabla_z f(\cdot,\cdot,\cdot,z)$. We choose $\tilde{h}_n:\IR\to\IR$ continuously differentiable with the following properties:
\bit
\item $(\tilde{h}_n)_{n\in\IN}$ converges locally uniformly to the identity; For all $n\in\IN$ and  $z\in\IR$ it holds that $|\tilde{h}_n(z)|\leq |z|$, $|\tilde{h}_n(z)|\leq n+1$ and
\begin{equation}\label{trunc-family}
\tilde{h}_n(z)= \left\{
\begin{array}{cl}
(n+1)&,z> n+2,\\
z&,|z|\leq n,\\
-(n+1)&,z<-(n+2).
\end{array}
\right.
\end{equation}
\item The derivative of $\tilde{h}_n$ is absolutely bounded by $1$, and converges to $1$ locally uniformly.
\eit
We remark that such sequence of functions exists. The above requirements are for instance consistent with
\begin{displaymath}
\tilde{h}_n(z)= \left\{
\begin{array}{cl}
\big(-n^2+2nz-z(z-4)\big)/4&,z\in[n,n+2],\\
\big(n^2+2nz+z(z+4)\big)/4&,z\in[-(n+2),-n].\\
\end{array}
\right.
\end{displaymath}
We then define $h_n: \IR^d\to \IR^d$ by $z\mapsto
h_n(z)=(\tilde{h}_n(z_1),\cdots, \tilde{h}_n(z_d))$, $n\in\IN$.

%
%
%

\subsubsection{The family of truncated FBSDE and results concerning them}

Recall the notation $\Theta=(X,Y,Z)$ for the solution of system
(\ref{fbsde-sde}), (\ref{fbsde-bsde}), the driver of BSDE
(\ref{aux:1order-mall-dif-bsde}) with terminal condition $\xi =
g(X_T)$, where $g$ is a bounded differentiable function and {\bf
HX1} is satisfied. For $n\in\IN$ take the sequence $(h_n)_{n\in\IN}$
defined in (\ref{trunc-family}) and define the sequence of
approximate drivers $F^n:\Omega\times[0,T]\times \IR^m\times
\IR\times\IR^d\to \IR$ by
\begin{align}\label{2D-mall-diff-truncated-driver}
F^n(t,x,u,v)&=\big\langle\nabla_x f(t,\Theta_t),x\big\rangle + \nabla_y f(t,\Theta_t)u
 +\big\langle\nabla_z f\big(t,X_t,Y_t,h_n(Z_t)\big),v\big\rangle.
\end{align}
The advantage of approximating the driver in this way is a technical
one: we can make use of the well known $\Theta$ and its properties,
and do not have to deal with approximations of $\Theta$ and its
Malliavin derivatives at the same time.

For $i\in\{1,\cdots,d\}$, $0\leq u\leq t\leq T$ and $n\in\IN$ consider the following BSDE
\begin{align}\label{1dbsde-sequence}
U^n_{u,t} &= D^i_u \xi +
\int_t^T F^n(s, \Xi^n_{u,s})\ud s- \int_t^T V^n_{u,s} \ud W_s, \quad \Xi_{u,s}^n=(D^i_u X_s, U^n_{u,s}, V^n_{u,s}),
\end{align}
where $D^i \xi$, $D^i X$ denote the first Malliavin derivatives of
$\xi$ and $X$ respectively.

In the following Lemma we state existence, uniqueness and Malliavin
differentiability of the solution processes of BSDE
(\ref{1dbsde-sequence}). The Lemma's proof will result from a
theorem formulated in the Appendix, where all the hypotheses,
variants of the hypotheses employed in Theorem
\ref{theo:2-order-mall}, are formulated. To avoid repetitions, we do
not formulate them here again.
\begin{lemma}[2nd order Malliavin diff. of Lipschitz BSDE]\label{Lip-2nd-mall-diff}
For each $n\in\IN$, (\ref{1dbsde-sequence}) has a unique
solution $(U^n,V^n)$ in
$\cS^{2p}([0,T]\times[0,T])\times\cH^{2p}([0,T]\times[0,T])$ for any $p\geq 1$.

Furthermore for $0\leq u\leq t\leq T$ the random variables
$(U^n_{u,t},V^n_{u,t})$ are Malliavin differentiable and for any
$j\in\{1,\cdots,d\}$ a version of $\{(D^j_v U^n_{u,t},D^j_v
V^n_{u,t});0\leq v\leq u \leq t\leq T\}$ satisfies
\begin{align}\label{2dbsde}
D^j_v U^n_{u,t} &= D^j_vD^i_u \xi -\int_t^T D^j_v V^n_{u,s}\ud W_s\nonumber \\
&\qquad+\int_t^T \Big[
(D^j_v F^n)(s,\Xi^n_{u,s})+\langle (\nabla F^n)(s,\Xi^n_{u,s}) , D^j_v \Xi^n_{u,s}\rangle
\Big]\ud s,
\end{align}
with $\Xi_{u,s}^n=(D^i_u X_s, U^n_{u,s}, V^n_{u,s})$ and $D^j_v
\Xi^n_{u,s}=(D^j_vD^i_u X_s,D^j_v U^n_{u,s},D^j_v V^n_{u,s}), 0\leq
v\leq u \leq s\leq T$.
\end{lemma}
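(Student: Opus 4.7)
The plan is to show, for each fixed $n$, that the truncated BSDE (\ref{1dbsde-sequence}) falls in the scope of the general second-order Malliavin differentiability theorem stated in the Appendix; the truncation $h_n$ is precisely the device that turns the $z$-coefficient of the driver into a bounded process and thereby makes that theorem applicable here.

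The first step handles existence, uniqueness, and the $\cS^{2p}\times\cH^{2p}$ bounds via Lemma \ref{apriori.estimate}. The driver $F^n$ is affine in $(u,v)$ with coefficient in $u$ equal to $\nabla_y f(s,\Theta_s)$, bounded by $M$ under HY1, and coefficient in $v$ equal to $\nabla_z f(s,X_s,Y_s,h_n(Z_s))$, bounded by $M(1+\|h_n\|_\infty)\leq M(n+2)$. The stochastic Lipschitz coefficient in the $v$-slot is therefore the deterministic constant $M(n+2)$, which trivially generates a BMO martingale and fixes the conjugate exponent $\barq$ appearing in Lemma \ref{apriori.estimate}. The ``driver at zero'' is $\langle \nabla_x f(s,\Theta_s), D^i_u X_s\rangle$: one has $|\nabla_x f(s,\Theta_s)|\leq M(1+|Y_s|+|Z_s|^2)$ with $Y\in\cS^\infty$ by Theorem \ref{theo:moment-estimates-special-class} and, by part 4) of Lemma \ref{bmoeigen}, $Z\in\cH^p$ for every $p\geq 1$, while $D^i_u X\in\cS^p$ for every $p$ by Theorem \ref{X-malliavin-diff}. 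The terminal random variable $D^i_u\xi=\nabla g(X_T)D^i_u X_T$ lies in $L^p$ for every $p$ as well. Lemma \ref{apriori.estimate} then delivers the claimed moment bounds for $(U^n,V^n)$.

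The second step verifies the Malliavin regularity hypotheses of the Appendix theorem on the data. Under HX2 and HY2 the terminal condition $D^i_u\xi$ is Malliavin differentiable with derivative $\nabla^2 g(X_T)\,D^j_v X_T\,D^i_u X_T + \nabla g(X_T)\,D^j_v D^i_u X_T$, which lies in $L^p$ for every $p$ by Theorem \ref{X-malliavin-diff}. Formal differentiation of $F^n$ produces exactly the integrand on the right-hand side of (\ref{2dbsde}): $D^j_v F^n(s,\Xi^n_{u,s})$ is the chain-rule contraction of the Hessian of $f$ along $(D^j_v X, D^j_v Y, D^j_v Z)$, evaluated at $\Theta_s$ (or at the truncated triple $(X_s,Y_s,h_n(Z_s))$ in the $z$-slot), with an additional factor $h_n'(Z_s)$ bounded by $1$. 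Under HY2 the Hessian entries are dominated by the process $K\in\cS^{2p}$ for every $p$, while $DX$, $DY$, $DZ$ possess moments of all orders by Theorems \ref{X-malliavin-diff} and \ref{theo:bsde-1d-mall-diff}. A H\"older argument then places the Malliavin-differentiated driver in the spaces required by the Appendix theorem, which finally produces $(D^j_v U^n, D^j_v V^n)$ satisfying (\ref{2dbsde}).

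The main obstacle is organizational rather than conceptual: each summand of $D^j_v F^n$ must be paired with the right H\"older companions, so that HY2's domination by $K$, the BMO property of $Z\ast W$, and the moment bounds for $DX$, $D^2 X$, $DY$, $DZ$ fit the integrability conditions of the Appendix theorem. Once this bookkeeping is done, no new analytical ingredient is required, because the truncation $h_n$ has already removed the quadratic $z$-growth that obstructs a direct treatment of (\ref{2order-mall-bsde}).
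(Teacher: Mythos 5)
Your proposal is correct and follows essentially the same route as the paper: reduce the lemma to the second-order differentiability result for Lipschitz BSDE in the Appendix (Theorem \ref{appendixB:2-order-mall-diff-lip-BSDE}) by checking its hypotheses (A1)--(A4), using that for fixed $n$ the truncation makes $\nabla_z f(\cdot,h_n(\cdot))$ bounded, that $D^i_u\xi$ and $D^j_vD^i_u\xi$ have all moments by Theorems \ref{theo.SDE-classic-diff} and \ref{X-malliavin-diff}, and that the Malliavin derivative of the driver is controlled via the HY2 domination by $K$ together with the moment bounds on $DX$, $D^2X$, $DY$, $DZ$ (the content of Lemma \ref{aux:lemma-finite-RHS}). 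The only cosmetic difference is that you obtain existence, uniqueness and the $\cS^{2p}\times\cH^{2p}$ bounds directly from Lemma \ref{apriori.estimate} with the deterministic Lipschitz constant $M(n+2)$, whereas the paper reads them off the Appendix theorem itself; both are legitimate and equivalent here.
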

For clarity of  exposition we give a few words about the driver of
BSDE (\ref{2dbsde}). Assuming $d=m=1$ and hence omitting the
superscripts $i$ and $j$, and denoting $\Theta=(X,Y,Z)$ and
$\Theta^n=(X,Y,h_n(Z))$, we can describe the first term inside the
integral by
\begin{align*}
(D_v F^n)(t,\Xi^n_{u,t}) &= D_v[(\nabla_x f)(t,\Theta_t)]D_uX_t +D_v[(\nabla_y f)(t,\Theta_t)]U^n_{u,t}+D_v[(\nabla_z f)(t,\Theta^n_t)]V_{u,t}^n.
\end{align*} These three terms can be further specified by
\begin{align*}
&D_v[(\nabla_x f)(t,\Theta_t)]D_uX_t\\
&\qquad= (\nabla_{xx}f)(t,\Theta_t)D_v X_t D_uX_t
+(\nabla_{xy}f)(t,\Theta_t)D_v Y_t D_uX_t+(\nabla_{xz}f)(t,\Theta_t)D_v Z_t D_uX_t,
\end{align*} an analogous expression for $D_v[(\nabla_y f)]$, while the last part is given by
\begin{align*}
&D_v[(\nabla_z f)(t,\Theta^n_t)]V^n_{u,t}\\
&\qquad= (\nabla_{zx}f)(t,\Theta^n_t)D_v X_t V^n_{u,t}
+(\nabla_{zy}f)(t,\Theta^n_t)D_v Y_t
V^n_{u,t}+(\nabla_{zz}f)\big(t,\Theta^n_t)h'_n(Z_t)D_v Z_t
V^n_{u,t}.
\end{align*}
The second term of the driver in (\ref{2dbsde}) can be expressed by
\begin{align*}
&\langle (\nabla F^n)(s,\Xi^n_{u,s}) , D_v \Xi^n_{u,s}\rangle \\
&\qquad= \nabla_x f(s,\Theta_s) D_v D_u X_s + \nabla_y f(s,\Theta_s)
D_v U^n_{u,s} +\nabla_z f\big(s,X_s,Y_s,h_n(Z_s)\big) D_v V^n_{u,s}.
\end{align*}

To compact notation a bit, we denote the driver component in
(\ref{2dbsde}) not containing $D_v U^n_{u,s}$ and $D_v V^n_{u,s}$ by
\begin{align}\label{aux:An-proc}
A^n_{v,u,s}&= (D_v F^n)(s,\Xi^n_{u,s})+(\nabla_x
F^n)(s,\Xi^n_{u,s})D_v D_u X_s,\quad v,u,s\in[0,T].
\end{align}

Before giving the proofs of Theorem \ref{theo:2-order-mall} or Lemma \ref{Lip-2nd-mall-diff} we
prove two helpful Lemmas.

\begin{remark}\label{barr.indep.n}
Since $|h_n(z)|$ is dominated by $|z|$, it is clear from {\bf HY1} that
\[
\sup_{n\in\IN}
\| \nabla_z f\big(t,X,Y,h_n(Z)\big)*W \|_{BMO} \leq C \| \big(1 + |Z|\big)*W \|_{BMO}<\infty.\]
Hence by Lemma \ref{bmoeigen}, there exists a $\barr$ such that the stochastic exponentials related to the two BMO martingales above belong both to $L^\barr$. We remark that $\barr$
is independent of $n$.
\end{remark}

\begin{lemma}\label{aux:bounds-DY-DZ} Assume {\bf HX2} and {\bf HY2}
hold, that $(U^n,V^n)$ solve BSDE (\ref{1dbsde-sequence}) and
$(DY,DZ)$ BSDE (\ref{aux:1order-mall-dif-bsde}). Then, for any
$p\geq 1$ we have
\begin{align*}
\sup_{n\in\IN} \sup_{0\leq u\leq T}\left\{ \IE\Big[ \Big(\int_0^T
|D_u Y_s|^2+|D_u Z_s|^2\ud s\Big)^{p}
 +\Big(\int_0^T |U^n_{u,s}|^2+|V^n_{u,s}|^2\ud s\Big)^{p}\Big]
\right\} < \infty.
\end{align*}
\end{lemma}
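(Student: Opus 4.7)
The plan is to apply the moment estimate of Lemma~\ref{apriori.estimate} to both BSDE (\ref{aux:1order-mall-dif-bsde}) and BSDE (\ref{1dbsde-sequence}), with constants that can be shown to be uniform in $n\in\IN$ and in the ``Malliavin parameter'' $u\in[0,T]$. For a fixed $u$, I would view (\ref{1dbsde-sequence}) as a BSDE in the unknown $(U^n_{u,\cdot},V^n_{u,\cdot})$ whose driver is linear in $(u,v)$ with Lipschitz constants $|\nabla_y f(s,\Theta_s)|\le M$ and $|\nabla_z f(s,X_s,Y_s,h_n(Z_s))|$, and whose ``free'' term in the sense of (HA3) is $\langle\nabla_x f(s,\Theta_s),D^i_u X_s\rangle$. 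BSDE (\ref{aux:1order-mall-dif-bsde}) fits the same template with $h_n$ replaced by the identity.

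The first key point is the uniform BMO control of the $v$-Lipschitz coefficient. By \textbf{HY1} and $|h_n(z)|\le|z|$ one has $|\nabla_z f(s,X_s,Y_s,h_n(Z_s))|\le M(1+|Z_s|)$; since $Z*W\in BMO$ by Theorem~\ref{theo:moment-estimates-special-class}, Remark~\ref{barr.indep.n} supplies a single pair $(\barr,\barq)$ such that $\cE(H^n*W)\in L^{\barr}$ uniformly in $n$, where $H^n_s=|\nabla_z f(s,X_s,Y_s,h_n(Z_s))|$. The same $(\barr,\barq)$ works for (\ref{aux:1order-mall-dif-bsde}). This is the step I expect to be the main obstacle, since without this uniformity both $\barr$ and the constant delivered by Lemma~\ref{apriori.estimate} could in principle blow up with $n$.

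The second step is to estimate the terminal datum and the free term in $L^q$ for arbitrary $q\ge 1$, uniformly in $u$ and $n$. For the terminal condition, $D^i_u\xi=\nabla g(X_T)\,D^i_u X_T$ together with $|\nabla g|\le M$ and the moment bound $\sup_u\|D^i_u X\|_{\cS^q}<\infty$ from Theorem~\ref{X-malliavin-diff} gives $\sup_u\E[|D^i_u\xi|^{q}]<\infty$. For the free term I would use $|\nabla_x f(s,\Theta_s)|\le M(1+|Y_s|+|Z_s|^2)$ and H\"older's inequality to reduce to bounding
\[
\IE\Bigl[\sup_{0\le s\le T}|D^i_u X_s|^{q}\Bigl(\int_0^T (1+|Y_s|+|Z_s|^2)\uds\Bigr)^{q}\Bigr].
\]
Boundedness of $Y$ in $\cS^\infty$ (Theorem~\ref{theo:moment-estimates-special-class}) handles the first two summands inside the time integral, while the energy inequality, part~4) of Lemma~\ref{bmoeigen}, applied to the BMO martingale $Z*W$ controls all moments of $\int_0^T|Z_s|^2\uds$. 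The remaining factor is uniformly controlled by Theorem~\ref{X-malliavin-diff}.

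Finally, I would feed these two uniform estimates into the conclusion of Lemma~\ref{apriori.estimate} to obtain a constant depending only on $p,T,M$, the BMO norm of $Z*W$, and moment bounds for $DX$, hence independent of both $n$ and $u$, that dominates $\|U^n_{u,\cdot}\|_{\cS^{2p}}^{2p}+\|V^n_{u,\cdot}\|_{\cH^{2p}}^{2p}$ and, analogously, $\|D_uY\|_{\cS^{2p}}^{2p}+\|D_uZ\|_{\cH^{2p}}^{2p}$. Taking $\sup_{u,n}$ and using $\cS^{2p}\hookrightarrow\cH^{2p}$ for the $U^n,DY$ contributions then yields the claim.
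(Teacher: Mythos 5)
Your proposal is correct and follows essentially the same route as the paper: it invokes the moment estimate of Lemma \ref{apriori.estimate} for both BSDE, uses $|h_n(z)|\le|z|$ together with Remark \ref{barr.indep.n} to get a BMO bound (hence a pair $(\barr,\barq)$ and a constant) uniform in $n$, and then bounds the terminal datum and the free term $\nabla_x f(s,\Theta_s)D_uX_s$ via the SDE moment estimates, the boundedness of $Y$, and the energy inequalities for $Z*W$. The paper's own proof is just a terser version of exactly this argument.
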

\begin{proof}
For any $n\in\IN, z\in\IR$ our hypothesis gives $|h_n(z)|\le |z|$.
Hence the driver $F^n$ of (\ref{2D-mall-diff-truncated-driver})
satisfies the same growth conditions as the driver of BSDE
(\ref{aux:1order-mall-dif-bsde}). Therefore, one can apply the
results of Section \ref{sec:bmo-BSDE} to either BSDE and obtain for
$p\ge 1$
\begin{align*} &
\sup_{0\leq u\leq T}\Big\{
\IE
\Big[\Big(\int_0^T|D_u Y_s|^{2}\uds\Big)^{p}+\Big( \int_0^T|D_u Z_s|^2\ud s\Big)^p + \Big(\int_0^T|U^n_{u,s}|^{2}\uds\Big)^p+\Big( \int_0^T|V^n_{u,s}|^2\ud s\Big)^p  \Big]\Big\}\\
&\qquad\qquad\qquad\quad\leq
    C \sup_{0\leq u\leq T}\IE\Big[\Big(|D_u \xi|^{2} +
    \int_0^T |\nabla_x f(s,\Theta_s)D_u X_s|^2\ud s\Big)^{p\barq^2}\Big]^{\frac{1}{\barq^2}},
\end{align*}
with $\barq$ the H\"older conjugate of $\barr$. The results of subsection
\ref{subsec:sde-results} combined with assumptions {\bf HX2} and
{\bf HY2} yield the finiteness of the right hand side of the
inequality.
\end{proof}

\begin{lemma}\label{aux:lemma-finite-RHS}
Assume {\bf HX2} and {\bf HY2} hold. For all $p\geq 1$ we have
\begin{align*}
\sup_{n\in\IN} \sup_{0\leq u,v \leq T} \IE\Big[\, |D_vD_u\xi|^{2p} +
\Big(\int_0^T |A^n_{v,u,s}| \ud s\Big)^{2p} \Big]  <\infty,
\end{align*} with $A^n, n\in\IN,$ given by (\ref{aux:An-proc}).
\end{lemma}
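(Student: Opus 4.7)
The terminal piece is straightforward: since $\xi=g(X_T)$ with $g\in C^2_b$ by {\bf HY2}, the chain rule expresses $D_vD_u\xi$ as the sum of $g''(X_T)\,D_vX_T\,D_uX_T$ and $g'(X_T)\,D_vD_uX_T$, so the $\cS^p$-bounds on $DX$ and $DDX$ from Theorem \ref{X-malliavin-diff} directly yield $\sup_{u,v}\E[|D_vD_u\xi|^{2p}]<\infty$ for every $p\ge 1$; no dependence on $n$ enters here.

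For the driver piece, I would expand $A^n_{v,u,s}$ using the explicit formulas displayed right before the lemma. It is a finite sum of monomials, each a product of at most three factors of three kinds: (i) a first- or second-order partial derivative of $f$, evaluated either at $(s,\Theta_s)$ or at $(s,X_s,Y_s,h_n(Z_s))$; (ii) a first-order Malliavin derivative ($D_\cdot X$, $D_\cdot Y$, $D_\cdot Z$), or one of $U^n_{u,s}$, $V^n_{u,s}$, or the second-order derivative $D_vD_uX_s$; (iii) possibly the scalar $h'_n(Z_s)\in[-1,1]$. The key uniformities to exploit are: $|h'_n|\le 1$ and $|h_n(z)|\le|z|$, so any second derivative of $f$ evaluated at the truncated argument is still pointwise dominated by the process $K_s\in\cS^{2p}$ given by {\bf HY2}, \emph{uniformly in $n$}; $\nabla_x f(s,\Theta_s)$ is dominated by $M(1+|Y_s|+|Z_s|^2)$ and therefore belongs to $\cH^{2p}$ for every $p\ge 1$ because $Y\in\cS^\infty$ and $Z*W\in BMO$ (energy inequality in Lemma \ref{bmoeigen}); the $\cS^p$ resp.\ $\cH^p$ moments of $DX$ and $DDX$ are uniform in all time-parameters by Theorem \ref{X-malliavin-diff}; and those of $DY$, $DZ$, $U^n$, $V^n$ are uniform in $u$ and, for $U^n,V^n$, in $n$ as well, by Lemma \ref{aux:bounds-DY-DZ}. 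Each monomial is then estimated by H\"older's inequality applied to the $ds$-integral, with exponents chosen so that every factor sits in a space where the uniform bound applies.

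The main obstacle is the triple product hidden inside $D_v[(\nabla_z f)(s,\Theta^n_s)]\,V^n_{u,s}$, namely $f_{zz}(s,X_s,Y_s,h_n(Z_s))\,h'_n(Z_s)\,D_vZ_s\,V^n_{u,s}$ (the $f_{zy}$ and $f_{zx}$ analogues are easier since $D_vY$ and $D_vX$ are better behaved than $D_vZ$). None of its three variable factors is pointwise bounded, so I would apply a three-factor H\"older split with exponents $(q_1,q_2,q_3)$ satisfying $1/q_1+1/q_2+1/q_3=1$, placing $K_s$ in $\cS^{2pq_1}$, $D_vZ$ in $\cH^{2pq_2}$ (uniformly in $v$), and $V^n$ in $\cH^{2pq_3}$ (uniformly in $n,u$); all three norms are finite for any admissible choice of exponents by the inputs listed above. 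Once this worst term is controlled, the remaining summands in $A^n$ (including the single first-order term $\nabla_x f(s,\Theta_s)\,D_vD_uX_s$) follow by the same scheme with only two nontrivial factors, and taking $\sup_{n,u,v}$ yields the lemma.
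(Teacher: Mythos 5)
Your proposal is correct and takes essentially the same route as the paper: chain rule plus the moment bounds of Theorem \ref{X-malliavin-diff} for $D_vD_u\xi$, domination of the second-order derivatives of $f$ by $K\in\cS^{2p}$ together with $|h_n'|\leq 1$, the uniform-in-$(n,u)$ bounds of Lemma \ref{aux:bounds-DY-DZ}, and the growth bound $|\nabla_x f|\leq M(1+|Y|+|Z|^2)$ with $Z\in\cH^{2p}$ for the term involving $D_vD_uX$. The only cosmetic difference is that where you apply a three-factor H\"older split to the worst product $K_s\,D_vZ_s\,V^n_{u,s}$, the paper first symmetrizes the mixed products via $2ab\leq a^2+b^2$ and then separates $K$ by a two-factor Cauchy--Schwarz; the inputs and the resulting estimates are the same.
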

\begin{proof}
To prove this result we analyze each term in more detail.

\emph{Part 1):} The first term presents little difficulty, since
$\xi=g(X_T)$ and $X$ is a diffusion process. For $0\le v\le u\le T$
we have $D_v D_u\xi = D_v X_T [Hg](X_T) D_u X_T + \nabla g(X_T)D_v
D_u X_T,$ where $[Hg]$ is the Hessian matrix of $g$.

Since $g \in C^2_b$, we may use the inequality $2ab\leq (a^2+b^2)$
valid for $a, b\in\IR$ combined with Theorem \ref{X-malliavin-diff}
to obtain
\begin{align*}
\sup_{0\leq u,v\leq T} \IE\Big[\, |D_vD_u\xi|^{2p}\Big]
&\leq
C\sup_{0\leq u,v\leq T} \IE\Big[\,
|D_u X_T|^{4p}+|D_vD_uX_T|^{2p}\Big]<\infty.
\end{align*}

\emph{Part 2):} We now analyze the second term, starting with the
identification
\begin{align*}
A^n_{v,u,s}&=(D_v F^n)(s,\Xi^n_{u,s})+(\nabla_x F^n)(s,\Xi^n_{u,s})D_v D_u X_s\\
&=(D_v F^n)(s,\Xi^n_{u,s})+(\nabla_x f)(s,\Theta_{s})D_v D_u X_s,
\end{align*}
for $v,u,s\in[0,T].$ Now $(D_v F^n)(t,\Xi^n_{u,s})$ is composed of
products of first order Malliavin derivatives of $X,Y$ or $Z$ and
second order partial derivatives of $f$. Assumption {\bf HY2}
guarantees that the second order derivatives of $f$ are dominated by
a process $K$ belonging to $\cS^{2p}([0,T])$. Combining this with
the hypothesis $|h'_n|\leq 1$ for all $n$ we easily obtain
\begin{align*}
|(D_v F^n)(s,\Xi^n_{u,s})|\leq CK_s\Big\{|D_v X_s|^2+|D_u
X_s|^2+|D_v Y_s|^2+|U^n_{u,s}|^2+|D_v Z_s|^2+|V^n_{u,s}|^2\Big\}.
\end{align*}
Summands involving the Malliavin derivatives of $X$ can be dealt
with arguments as in \emph{part 1)} of this proof. Furthermore,
\begin{align*}
&\sup_{n\in\IN} \sup_{0\leq u\leq T} \IE\Big[ \Big(\sup_{0\leq t\leq T}|K_t|\int_0^T \Big[\,|U^n_{u,s}|^2+|V^n_{u,s}|^2+|D_u Y_s|^2+|D_u Z_s|^2\Big]\ud s\Big)^{2p}\Big]\\
& \hspace{4cm} \leq \|K \|_{\cS^{4p}}^{2p} \sup_{n\in\IN}
\sup_{0\leq u\leq T} \| \,|U^n_{u}|+|V^n_{u}|+|D_u Y|+|D_u Z|\,
\|_{\cH^{8p}}^{4p} <\infty.
\end{align*} The last inequality is satisfied by Lemma \ref{aux:bounds-DY-DZ} and the fact that $K\in\cS^{2p}$ for all $p\geq 1$.

We are left with the analysis of the term $(\nabla_x
f)(s,\Theta_{s})D_v D_u X_s$. From condition {\bf HY1},
$\nabla_xf(s,\Theta_s)$ is dominated by $M(1+|Y_s|+|Z_s|^2)$ with a
bounded process $Y$, and so we obtain
\begin{align*}
&\sup_{0\leq u,v \leq T}\IE\Big[\, \Big(\int_0^T |(\nabla_x f)(t,\Theta_{s})D_v D_u X_s| \ud s\Big)^{2p} \Big]\\
&\qquad\leq C \sup_{0\leq u,v \leq T}\IE\Big[ \sup_{t\in[0,T]}|D_v D_u X_t|^{2p} \Big( \int_0^T (1+|Z_s|^2) \ud s\Big)^{2p} \Big]\\
&\qquad\qquad\leq C \sup_{0\leq u,v \leq T}\big\| D_vD_uX \big\|_{\cS^{4p}}^{2p}\big\| 1+|Z| \big\|_{\cH^{8p}}^{4p}<\infty
\end{align*}
For the last two inequalities we used H\"older's inequality, that
$Z\in\cH^{2p}$ for all $p\geq 1$ and Theorem \ref{X-malliavin-diff}.

The Lemma's inequality follows from a combination of \emph{Part 1)} and \emph{Part 2)}.
\end{proof}
We are now in a position to prove Lemma \ref{Lip-2nd-mall-diff}. We will use Theorem  \ref{appendixB:2-order-mall-diff-lip-BSDE} stated in the appendix.
\begin{proof}[Proof of Lemma \ref{Lip-2nd-mall-diff}]
We have to establish the hypotheses to hold for the application of
Theorem \ref{appendixB:2-order-mall-diff-lip-BSDE}. The terminal
condition is given by the Malliavin derivative of $\xi=g(X_T)$ with
$g\in C_b^2$. In view of Theorems \ref{theo.SDE-classic-diff} and
\ref{X-malliavin-diff}, conditions (A2) and (A4) are satisfied.

Given our construction, it is clear that for each fixed $n\in\IN$, the driver $F^n$ is uniformly Lipschitz continuous in $(y,z)$, since $\nabla_y f$ and $\nabla_z f(\cdot,h_n(\cdot))$ are bounded. The boundedness of $\nabla_x f$ combined with the fact that $DX\in
\cS^{2p}([0,T]\times[0,T])$ enables us to conclude \[\sup_{0\leq
u\leq T} \E[\,\sup_{0\leq t\leq T}|\nabla_x
f(t,\Theta_t)D_uX_t|^{2p}]<\infty,\]
and hence condition (A1) is also
satisfied.

The verification of condition (A3) is also simple. $F^n$ is
continuous differentiable in $(y,z)$. Furthermore since $Y$ and $Z$
are Malliavin differentiable  and $X$ is twice Malliavin
differentiable, we have that $F^n(t, D_u X_t, 0, 0)$, $F^n(t, 0, 1, 0)$ and $F^n(t, 0, 0, 1)$ are also Malliavin
differentiable for $0\le u\le t\le T$. The proof of the moment
inequality of assumption (A3) is a consequence of Lemma
\ref{aux:lemma-finite-RHS}.

Hence we may apply Theorem \ref{appendixB:2-order-mall-diff-lip-BSDE}.
\end{proof}

\subsection{Proof of Theorem \ref{theo:2-order-mall}}
We are finally able to prove the main result of this section.

\begin{proof}[Proof of Theorem \ref{theo:2-order-mall}]
To prove this result we apply Lemma \ref{lemma.from.nualart.extended}. We
have to show that the Lemma's assumptions are satisfied. Fix $0\le
u\le t\le T.$

1) Lemma \ref{Lip-2nd-mall-diff} ensures existence, uniqueness and
Malliavin differentiability of each $(U_{u,t}^n, V_{u,t}^n)$.

2) We now prove the $\cH^2$-convergence of
$(U^n_{u,\cdot},V^n_{u,\cdot})$ to $(D_uY_\cdot,D_u Z_\cdot)$. Using
Lemma \ref{comparison.theo} applied to the BSDE resulting from the
difference $D_uY_\cdot-U^n_{u,\cdot}$ (see BSDE
(\ref{aux:1order-mall-dif-bsde}) and (\ref{1dbsde-sequence})), we
have with $\Theta=(X,Y,Z)$ and $\Theta^n=(X,Y,h_n(Z))$
\begin{align*}
&\sup_{0\leq u\leq T}
\IE\Big[ \int_0^T | D_u Y_s-U^n_{u,s}|^{2}\ud s+\int_0^T |D_uZ_s-V^n_{u,s}|^2\ud s\Big]\\
&\quad\leq C \sup_{0\leq u\leq T} \E\Big[\Big(\int_0^T |\nabla_z f (s,\Theta_s)-\nabla_z f(s,\Theta^n_s)| |V^n_{u,s}| \uds\Big)^{2\barq^2}\Big]^{\frac{1}{\barq^2}}\\
&\quad\leq  C \sup_{0\leq u\leq T}
\E\Big[\Big(\int_0^T |V^n_{u,s}|^2\uds\Big)^{2\barq^2}\Big]^{\frac{1}{2\barq^2}}  \E\Big[\Big(\int_0^T |\nabla_z f (s,\Theta_s)-\nabla_z f(s,\Theta^n_s)|^2\uds\Big)^{2\barq^2}\Big]^{\frac{1}{2\barq^2}},
\end{align*}
where $\barq$ is related to the BMO martingale $(\nabla_z
f(\cdot,X,Y,Z))*W$ as stated in subsection \ref{bmo-subsection}.

The first term in the last line is finite, uniformly in $n$, by
Lemma \ref{aux:bounds-DY-DZ}. For the second term, note that by {\bf
HY1} $\nabla_z f$ is continuous and, from (\ref{trunc-family}) so is
the family $h_n$. Furthermore, both $\nabla_z f(\cdot,\cdot,z)$ and
$\nabla_z f(\cdot,\cdot,h_n(z))$ are dominated by $C(1+|z|)$. Given
the integrability properties of $Z$ and the convergence of $h_n$ to
the identity function, dominated convergence yields the desired
convergence result, from which the convergence of
$(U^n_{u,t},V^n_{u,t})$ to $(D_uY_t,D_u Z_t)$ for a.e. $t\in[0,T]$
follows.

3) We prove the uniform boundedness of $\| (D U^n,D
V^n)\|^2_{L^2(\ID^{1,2})}$ in $n$.

The driver of BSDE (\ref{2dbsde}) is linear. So applying Lemma
\ref{apriori.estimate}, we obtain the following inequality for
$n\in\IN$
\begin{align*}
\|D_v U^n_u\|_{\cS^{2p}}^{2p}+\|D_v V^n_u\|_{\cH^{2p}}^{2p}\leq
C\E\Big[ |D_vD_u\xi|^{2p\barq^2}+\Big(\int_0^T
|A^n_{v,u,s}|\uds\Big)^{2p\barq^2} \Big]^{\frac{1}{\barq^2}},
\end{align*} where the constant $\barq$ is related to the BMO martingale $\nabla_z f(\cdot,\Theta)*W$ according to Remark \ref{def:r-bar}.
Lemma \ref{aux:lemma-finite-RHS} now yields
\[
\sup_{n\in\IN} \sup_{0\leq v,u\leq T} \Big\{\|D_v
U^n_u\|_{\cS^{2p}}^{2p}+\|D_v V^n_u\|_{\cH^{2p}}^{2p}\Big\}<\infty.
\]

By 1) to 3) we can apply Lemma \ref{lemma.from.nualart.extended} and
deduce the Malliavin differentiability of $(D Y, D Z)$. Arguments as
the ones used in Theorem 8.4 of \cite{AIDR07} show that $(D_v D_u Y,
D_v D_u Z)$ is a solution to BSDE (\ref{2order-mall-bsde}).

Uniqueness follows immediately from Lemma \ref{comparison.theo}.

To prove the representation $D_tD_uY_t=D_uZ_t$, one only needs to
recall that for each $n, u\le t$ we have $D_tD_uY_t^n=D_uZ_t^n$.
Since both sides converge to their respective limiting processes the
equality holds true in the limit.
\end{proof}

\section{Regularity in the time variable}\label{sec:reg-in-time}

With a view towards their numerical approximation, in this section we investigate regularity properties of the Malliavin derivatives of solutions of our qgFBSDE (\ref{fbsde-sde}), (\ref{fbsde-bsde}).

In the following subsections the results presented are shown to hold mainly under assumptions {\bf HX1} and {\bf HY1}.
Several of these results can be proved under weaker conditions, namely by replacing {\bf HY1} with {\bf HY0}.
This is achieved by using a canonical argument of regularization followed by the application of Fatou's lemma.
Because this type of reasoning is well known we state only Theorem \ref{zhangs-path-reg-theo-to-qgfbsde} under weaker assumptions.

\subsection{Continuity and bounds}\label{subsec:cont-bounds}

\begin{lemma}\label{kolm-for-DY-DZ}
Under {\bf HX1} and {\bf HY1} let $(X,Y,Z)$ be the solution processes of system (\ref{fbsde-sde}), (\ref{fbsde-bsde}), and $(DX,DY,DZ)$ their Malliavin derivatives.
Then for $u,v\in[0,T]$ and $p\geq 1$ there exists a positive constant $C_p$ such that
\begin{align*}
\|D_vY-D_u Y\|_{\cS^{2p}}^{2p}+\|D_vZ-D_u Z\|_{\cH^{2p}}^{2p} \leq
C_p |v-u|^{p}.
\end{align*}
\end{lemma}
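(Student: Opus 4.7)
Fix $u,v\in[0,T]$. Subtracting the two instances of BSDE (\ref{aux:1order-mall-dif-bsde}) for Malliavin parameters $u$ and $v$ shows that $(\delta Y,\delta Z):=(D_v Y-D_u Y,\ D_v Z-D_u Z)$ solves a linear BSDE on $[0,T]$ with terminal condition $\zeta := \nabla g(X_T)(D_v X_T-D_u X_T)$ and driver
\begin{align*}
\nabla_x f(s,\Theta_s)(D_v X_s-D_u X_s)+\nabla_y f(s,\Theta_s)\,\delta Y_s+\langle \nabla_z f(s,\Theta_s),\delta Z_s\rangle.
\end{align*}
By \textbf{HY1} and Theorem \ref{theo:moment-estimates-special-class}, $(\nabla_z f(\cdot,\Theta))*W\in BMO$, so this BSDE fits the random-Lipschitz framework of Section \ref{sec:bmo-BSDE}, with stochastic Lipschitz process $H=|\nabla_z f(\cdot,\Theta)|$ and free term $\nabla_x f(s,\Theta_s)(D_v X_s-D_u X_s)$.

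Applying Lemma \ref{apriori.estimate} directly, and writing $\barq$ for the conjugate H\"older exponent associated to $\nabla_z f(\cdot,\Theta)*W$ as in Remark \ref{def:r-bar}, we obtain
\begin{align*}
\|\delta Y\|_{\cS^{2p}}^{2p}+\|\delta Z\|_{\cH^{2p}}^{2p}
&\leq C\,\E\Big[|\zeta|^{2p\barq^2} \\
&\qquad +\Big(\int_0^T |\nabla_x f(s,\Theta_s)|\,|D_v X_s-D_u X_s|\,\ud s\Big)^{2p\barq^2}\Big]^{1/\barq^2}.
\end{align*}
The terminal contribution is handled via $|\nabla g|\leq M$ and the H\"older-type estimate $\|D_v X-D_u X\|_{\cS^q}^q\leq C_q(1+|x|^q)|v-u|^{q/2}$ from Theorem \ref{X-malliavin-diff} with $q=2p\barq^2$; raising to $1/\barq^2$ yields $C|v-u|^p$.

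For the driver integral, \textbf{HY1} gives $|\nabla_x f|\leq M(1+|Y|+|Z|^2)$. Pulling $\sup_s|D_v X_s-D_u X_s|$ out of the integrand and applying Cauchy--Schwarz on the expectation splits it into a factor involving Malliavin $X$-differences, controlled by Theorem \ref{X-malliavin-diff} at exponent $4p\barq^2$ and contributing $C|v-u|^p$, and a factor $\E[(\int_0^T(1+|Y_s|+|Z_s|^2)\ud s)^{4p\barq^2}]$, which is finite since $Y\in\cS^\infty$ and the energy inequality of Lemma \ref{bmoeigen}(4) applied to $Z*W\in BMO$ guarantees all moments of $\int_0^T|Z_s|^2\ud s$. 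The main obstacle to anticipate is precisely the quadratic growth of $\nabla_x f$ in $Z$, but this is absorbed cleanly by the BMO-based machinery of Section \ref{subsec:qgBSDE}; the remainder is careful bookkeeping of the H\"older exponents introduced by the change-of-measure argument inside Lemma \ref{apriori.estimate}.
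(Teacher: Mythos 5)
Your proposal is correct and follows essentially the same route as the paper: both write the linear BSDE satisfied by the difference $(D_vY-D_uY,\,D_vZ-D_uZ)$ and apply the random-Lipschitz machinery of Section \ref{sec:bmo-BSDE} (you invoke Lemma \ref{apriori.estimate} on the difference equation, the paper invokes the equivalent comparison Lemma \ref{comparison.theo}, which in this linear setting amounts to the same estimate), then control the terminal term via $|\nabla g|\leq M$ and the free term via the growth of $\nabla_x f$, the boundedness of $Y$, the moments of $\int_0^T|Z_s|^2\,\uds$ from BMO, and Theorem \ref{X-malliavin-diff}. Even the exponent bookkeeping ($2p\barq^2$ for the terminal contribution, $4p\barq^2$ after Cauchy--Schwarz on the driver integral) coincides with the paper's proof.
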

\begin{proof}
We use (\ref{aux:1order-mall-dif-bsde}) to write for $u,v,t\in[0,T]$
a FBSDE for the difference $D_v Y_t-D_u Y_t$. For this we employ the
comparison Lemma \ref{comparison.theo}, to obtain with $\xi=g(X_T)$
and for any $p\geq 1$
\begin{align*}
&\|D_vY-D_u Y\|_{\cS^{2p}}^{2p}+\|D_vZ-D_u Z\|_{\cH^{2p}}^{2p}\\
&\leq C \E\Big[\,|D_v \xi-D_u \xi|^{2p\barq^2}
+\Big(\int_0^T |(\nabla_x f)(s,\Theta_s)||D_v X_s - D_u X_s| \uds\Big)^{2p\barq^2}\Big]^{\frac{1}{\barq^2}}\\
&\leq C\E\Big[\,|D_v X_T-D_u X_T|^{2p\barq^2}\\
&\qquad \qquad +\sup_{0\leq t\leq T}|D_v X_t - D_u X_t|^{2p\barq^2} \Big(  \int_0^T (1+|Y_s|+|Z_s|^2) \uds\Big)^{2p\barq^2}\Big]^{\frac{1}{\barq^2}}\\
&\leq C \Big\{ \|D_v X_T -D_uX_T \|_{L^{2p\barq^2}}^{2p}+\|D_v X
-D_uX \|_{\cS^{4p\barq^2}}^{2p} \Big\} \leq C_p |v-u|^{p},
\end{align*}
where $\barq$ corresponds to the BMO martingale
$\nabla_zf(\Theta)*W$. The last line follows from a direct
application of Theorem \ref{X-malliavin-diff}.
\end{proof}

Equipped with these moment estimates we are now able to state our
first main result.

\begin{theo}[Time continuity]\label{DY-continuity}
Assume {\bf HX1} and {\bf HY1}. Then there exists a continuous
version of $(u,t) \mapsto D_uY_t$ in $\{(u,t): 0\le u\le t\le T.\}$
In particular there exists a continuous version of $Z$ on $[0,T].$

Assume {\bf HX2} and {\bf HY2}. Then there exists a continuous
version of $(v,u,t)\mapsto D_vD_uY_t$ for $0\leq v\leq u\leq t\leq
T$. In particular there is a continuous version of $(u,t)\mapsto D_u
Z_t$ for $0\leq u\leq t\leq T$.
\end{theo}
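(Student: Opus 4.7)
The plan is to apply a Kolmogorov-type continuity argument on the two-dimensional parameter domain $\{(u,t):0\le u\le t\le T\}$. Lemma \ref{kolm-for-DY-DZ} already provides H\"older-type regularity in the Malliavin argument $u$; to use Kolmogorov we need the analogous regularity in the time variable $t$, and then combine the two via the triangle inequality.

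For the $t$-regularity, I would fix $u\in[0,T]$, take $u\le s\le t\le T$, and write from the BSDE \eqref{aux:1order-mall-dif-bsde}
\[
D_u Y_t - D_u Y_s = \int_s^t D_u Z_r\,\ud W_r - \int_s^t \langle \nabla f(r,\Theta_r),D_u\Theta_r\rangle\,\ud r .
\]
Apply Burkholder--Davis--Gundy to the stochastic integral and H\"older's inequality to the Lebesgue integral. Using the growth bounds on $\nabla f$ from {\bf HY1}, the BMO property of $Z*W$ (Theorem \ref{theo:moment-estimates-special-class}), and the uniform moment bounds on $D_uX$, $D_uY$, $D_uZ$ already collected in the proof of Lemma \ref{kolm-for-DY-DZ} (which come from Lemma \ref{apriori.estimate}), this gives for every $p\ge 1$
\[
\E\bigl[\,|D_uY_t-D_uY_s|^{2p}\bigr]\le C_p\,|t-s|^{p}.
\]
Combined with Lemma \ref{kolm-for-DY-DZ} and the triangle inequality, this yields, for $0\le u\le s,\ 0\le v\le t$,
\[
\E\bigl[\,|D_uY_s-D_vY_t|^{2p}\bigr]\le C_p\bigl(|u-v|^{p}+|s-t|^{p}\bigr).
\]
Choosing $p$ large enough so that $p>2$, Kolmogorov's continuity criterion in two parameters produces a continuous version of $(u,t)\mapsto D_uY_t$ on the triangle. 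The continuity of $Z$ then follows immediately from the identification $Z_t=D_tY_t$ provided by Theorem \ref{theo:bsde-1d-mall-diff}: the diagonal of the continuous version of $D_uY_t$ is a continuous version of $Z$.

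For the second part, under {\bf HX2} and {\bf HY2}, the plan is identical but carried out one order higher, working with the linear BSDE \eqref{2order-mall-bsde} for $D_vD_uY$. First one establishes the analogue of Lemma \ref{kolm-for-DY-DZ} for $D_vD_uY$ and $D_vD_uZ$ by comparing two instances of \eqref{2order-mall-bsde} via Lemma \ref{comparison.theo}: the increments on the right-hand side are controlled by the continuity of $(v,u)\mapsto D_vD_uX$ (Theorem \ref{X-malliavin-diff}), by the integrability of the Hessian dominator $K_\cdot\in\cS^{2p}$ from {\bf HY2}, and by the regularity of $DY,DZ$ just obtained. Next, time regularity of $t\mapsto D_vD_uY_t$ comes from the BSDE \eqref{2order-mall-bsde} exactly as in the first-order argument above, using BDG, H\"older, and Lemma \ref{aux:lemma-finite-RHS} to bound the driver's moments. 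Kolmogorov in three parameters then yields a continuous version of $(v,u,t)\mapsto D_vD_uY_t$ on $\{0\le v\le u\le t\le T\}$, and the identification $D_uZ_t = D_tD_uY_t$ from Theorem \ref{theo:2-order-mall} produces the desired continuous version of $(u,t)\mapsto D_uZ_t$.

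The main technical obstacle is keeping all moment estimates uniform in the parameters and matching the exponent bookkeeping introduced by the BMO change of measure (the exponent $\barq^2$ appearing throughout Section \ref{sec:bmo-BSDE}). This is already organized in Lemma \ref{kolm-for-DY-DZ} for the first-order case; the second-order analogue requires one more layer of H\"older juggling to combine the Hessian factors with the products $D_v\Theta\,D_u\Theta$, but the ingredients are all in place through Lemmas \ref{aux:bounds-DY-DZ} and \ref{aux:lemma-finite-RHS} and Theorem \ref{X-malliavin-diff}.
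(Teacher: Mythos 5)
Your plan hinges on a joint Kolmogorov criterion in all parameters, and for that you need a H\"older-type moment estimate in the time direction, namely $\E[|D_uY_t-D_uY_s|^{2p}]\le C_p|t-s|^p$. This is where the argument breaks under the standing assumptions of the first statement. After BDG, the martingale part of the increment forces you to bound $\E\big[\big(\int_s^t|D_uZ_r|^2\,\ud r\big)^p\big]$ by a power of $|t-s|$, and none of the ingredients you cite (the BMO property of $Z*W$, Lemma \ref{apriori.estimate}, the bounds behind Lemma \ref{kolm-for-DY-DZ}) give more than an $\cH^{2p}$ control of $D_uZ$, i.e.\ a bound on the integral over the whole of $[0,T]$ with no rate on small intervals. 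Extracting a rate requires sup-in-time moments of $D_uZ$, and in the paper these are only available under {\bf HX2}--{\bf HY2}, via the second-order Malliavin calculus (third claim of Theorem \ref{Z-sinfty-bounds}); this is exactly why the time-H\"older estimate for $Z$ appears only in part iii) of Theorem \ref{theo:path-reg} under the stronger hypotheses. The same gap recurs one order higher in your second part: a three-parameter Kolmogorov criterion needs $\E[|D_vD_uY_t-D_vD_uY_s|^{2p}]\le C|t-s|^p$, whose martingale term would require sup-in-time moments of $D_vD_uZ$, i.e.\ essentially third-order information that is established nowhere.

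The paper avoids needing any rate in $t$. For the first statement it uses the representation $D_uY_t=\nabla Y_t(\nabla X_u)^{-1}\sigma(u,X_u)$ from Theorem \ref{theo:bsde-1d-mall-diff}: $t\mapsto\nabla Y_t$ has a continuous version because it solves the linear BSDE (\ref{diff-bsde}) (a sum of a Lebesgue and an It\^o integral with integrable integrands), and $u\mapsto(\nabla X_u)^{-1}\sigma(u,X_u)$ is continuous, so the product is jointly continuous; $Z_t=D_tY_t$ then gives the continuous version of $Z$. For the second statement it applies Kolmogorov only in the Malliavin parameters $(v,u)$, viewing $D_vD_uY$ as a process with values in $C([0,T])$ with the sup norm: pathwise continuity in $t$ is automatic since $D_vD_uY$ solves the BSDE (\ref{2order-mall-bsde}), and the increment estimates $\E[\sup_t|D_vD_uY_t-D_{v'}D_uY_t|^{2p}]\le C|v-v'|^p$ come from Lemma \ref{comparison.theo} together with Theorem \ref{X-malliavin-diff} and Lemma \ref{kolm-for-DY-DZ}. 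Your proof can be repaired along the same lines: for the first part, Lemma \ref{kolm-for-DY-DZ} already gives exactly the $\cS^{2p}$-increment estimate in $u$ needed for a $C([0,T])$-valued Kolmogorov argument in the single parameter $u$ (or one simply uses the representation formula), and for the second part the Banach-space-valued criterion in $(v,u)$ replaces your three-parameter version; but as written, the $t$-direction H\"older estimates you invoke are not justified at the stated level of hypotheses.
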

\begin{proof}
To make the proof simpler we assume $m=d=1$. Under {\bf HX1}, the
results of subsection \ref{subsec:sde-results} imply the existence
of continuous versions of $X$, $\nabla X$, $(\nabla X)^{-1}$ and
$(u,t)\mapsto D_uX_t$ for $0\le u\le t\le T$.

A quick analysis of (\ref{diff-bsde}), combined with the knowledge
that $(X,Y,Z)\in \cS^{2p} \times \cS^{\infty} \times \cH^{2p}$ and
$(\nabla X, \nabla Y, \nabla Z)\in \cS^{2p} \times \cS^{2p} \times
\cH^{2p}$ for all $p\geq 1$, allows one to conclude that a
continuous version of $\nabla Y$ exists: the process is given by the
sum of a Lebesgue and It\^o integral with well behaved integrands.

In Theorem \ref{theo:bsde-1d-mall-diff} we established $D_u Y_t =
\nabla Y_t (\nabla X_u)^{-1}\sigma(X_u)$, $0\le u\le t\le T$.
Condition {\bf HX0} ensures the continuity of $\sigma$. Given that all terms in the representation of $D_uY_t$ are continuous, we conclude that there is a continuous version of $(u,t) \mapsto
\big(\nabla Y_t (\nabla X_u)^{-1}\sigma(X_u)\big)$ for $0\leq u\leq
t\leq T$. This means that $(u,t)\mapsto D_u Y_t$ has a continuous
version for $0\leq u\leq t\leq T.$

By Theorem \ref{theo:bsde-1d-mall-diff}, $Z$ is a version of
$t\mapsto D_tY_t$. Hence the continuity of a version of
$(u,t)\mapsto D_u Y_t$ for $0\le u\le t\le T$ immediately implies
that $Z$ possesses a continuous version. This finishes the proof of
the first statement.

For the second statement we argue in a different way.

The second Malliavin derivative of $Y$ depends on three variables, $v,u,t\in[0,T]$. By using moment inequalities, we will show that $(v,u,t)\mapsto D_v D_u Y_{t}$ is continuous as a mapping to the space of continuous functions on $0\leq v\leq u\leq t\leq T$ equipped with the $\sup$ norm. By well known extensions of the Kolmogorov continuity criterion to normed vector spaces (see for example Theorem 1.4.1 in \cite{Kunita1990}) this will establish the desired continuity of $(v,u,t)\mapsto D_v D_u Y_t$ for $0\le v\le u\le t\le T.$  To verify the inequalities, for $0\le v\le u\le T$ and $0\le v'\le u'\le T$ we will have to estimate moments of
\[
\sup_{0\le t\le T} |D_{v} D_{u} Y_t - D_{v'} D_{u'} Y_t|^p.
\]
In a first step, we separate the two parameters by estimating this quantity by a constant multiple of
$$\sup_{0\le t\le T} |D_v D_u Y_t - D_{v'} D_u Y_t|^p + \sup_{0\le t\le T} |D_{v'} D_u Y_t - D_{v'}
D_{u'} Y_t|^p.$$ In what follows, for convenience we shall only give
the estimation of the first summand, remarking that the second one
may be treated in a very similar way. Fix $0\leq v,v'\leq u\leq t\leq T$. Again using the
comparison Lemma \ref{comparison.theo} with (\ref{2order-mall-bsde})
specified to $D_vD_uY_t-D_{v'}D_uY_t$, we get for $p\geq 1$
\begin{align*}
&\E\Big[\sup_{0\leq t\leq T}|D_vD_uY_t-D_{v'}D_u Y_t|^{2p}\Big]\\
&\quad\leq C \Big\{\E\Big[\,|D_vD_u \xi-D_{v'}D_u \xi|^{2p\barq^2}+\Big(\int_0^T \Big[\, |D_v \Theta_s-D_{v'} \Theta_s||[Hf](s,\Theta_s)||D_u \Theta_s|\\
&\qquad\qquad \qquad+|(\nabla_x f)(s,\Theta_s)||D_v D_uX_s - D_{v'}D_u X_s|\,\Big] \uds\Big)^{2p\barq^2}\Big]\Big\}\\
&\quad\leq C \Big\{
\| D_v X_T - D_{v'}X_T \|_{L^{6p\barq^2}}^{2p}
+\|D_v D_u X - D_{v'}D_u X\|_{\cS^{4p\barq^2}}^{2p}\\
&\qquad\qquad + \|D_v X - D_{v'}X\|_{\cS^{4p\barq^2}}^{2p}+ \|D_v Y - D_{v'}Y\|_{\cS^{4p\barq^2}}^{2p}+ \|D_v Z - D_{v'}Z\|_{\cH^{8p\barq^2}}^{4p}
\Big\}\\
&\quad \leq C_p |v-v'|^{p}.
\end{align*}
The successive inequalities are justified in view of the growth conditions
contained in the assumptions, H\"older's inequality, Theorem
\ref{X-malliavin-diff} and Lemma \ref{kolm-for-DY-DZ}. Kolmogorov's
continuity criterion for vector valued stochastic processes yields
the existence of a continuous version of $(v,u,t)\mapsto D_vD_uY_t$
for $0\leq v\leq u\leq t\leq T$, and hence by restriction also of
$(u,t)\mapsto D_u Z_t$ for $0\le u\le t\le T.$
\end{proof}
\begin{theo}[Bounds]\label{Z-sinfty-bounds}
Assume that {\bf HX1} and {\bf HY1} hold. Then for all $p\geq 1$
\begin{align*}
\E\Big[ \sup_{0\leq u\leq t\leq T} |D_uY_t|^{2p} \Big]&<\infty.
\end{align*}
In particular
\begin{align}
\label{Z-moment-time} \lVert Z \lVert_{\cS^{2p}}\ &<\infty.
\end{align}
Let {\bf HX2} and {\bf HY2} be satisfied. Then for all $p\geq1$
\begin{align*}
\sup_{0\leq u\leq T}\IE[\,\sup_{0\leq t\leq T} |D_uZ_t|^{2p}\,]<\infty.
\end{align*}
\end{theo}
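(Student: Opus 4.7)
The plan combines the Malliavin representation from Theorem \ref{theo:bsde-1d-mall-diff} with the stochastic-Lipschitz a priori estimate of Lemma \ref{apriori.estimate}.

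For the first bound, I would start from
\[D_u Y_t = \nabla Y_t\,(\nabla X_u)^{-1}\,\sigma(u, X_u), \qquad 0\le u\le t\le T,\]
and apply H\"older's inequality, which reduces the task to three individual sup-norm moment bounds. The moments of $\sup_u|(\nabla X_u)^{-1}|$ are controlled by (\ref{supnorm-gradX}), while those of $\sup_u|\sigma(u, X_u)|$ follow from the linear growth of $\sigma$ in {\bf HX0} combined with (\ref{supnorm-X}). The real work is bounding $\|\nabla Y\|_{\cS^{2p}}$ for every $p \ge 1$. The BSDE (\ref{diff-bsde}) for $(\nabla Y, \nabla Z)$ is linear with stochastic Lipschitz coefficient $\nabla_z f(s, \Theta_s)$ dominated by $M(1+|Z_s|)$; since $Z\ast W$ is BMO by Theorem \ref{theo:moment-estimates-special-class}, hypothesis (HA2) is satisfied and Lemma \ref{apriori.estimate} applies. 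The terminal $\nabla g(X_T)\nabla X_T$ has all moments finite by {\bf HY1} and Theorem \ref{theo.SDE-classic-diff}, and the driver-at-zero $\nabla_x f(s, \Theta_s)\nabla X_s$ has all moments finite because $|\nabla_x f| \le M(1+|Y|+|Z|^2)$, $Y$ is bounded, and $\int_0^T |Z_s|^2 ds$ has all moments via the energy inequality of Lemma \ref{bmoeigen}. This yields $\|\nabla Y\|_{\cS^{2p}}^{2p}<\infty$, and H\"older closes the first bound.

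The in-particular assertion is immediate: $Z_t = D_t Y_t$ by Theorem \ref{theo:bsde-1d-mall-diff} and Theorem \ref{DY-continuity} provides a continuous version of $(u, t) \mapsto D_u Y_t$, so pathwise $\sup_t |Z_t| \le \sup_{u\le t}|D_u Y_t|$.

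Under {\bf HX2} and {\bf HY2}, Theorem \ref{theo:2-order-mall} yields $D_u Z_t = D_t D_u Y_t$ and Theorem \ref{DY-continuity} supplies a jointly continuous version of $(v, u, t) \mapsto D_v D_u Y_t$ on the simplex $\{0 \le v \le u \le t \le T\}$. Hence pathwise
\[\sup_{0\le t\le T}|D_u Z_t| \;\le\; \sup_{\{(v,t):\,0\le v \le t \le T\}} |D_v D_u Y_t|,\]
and the task becomes bounding this last random variable in $L^{2p}$ uniformly in $u$. Lemma \ref{apriori.estimate} applied to the linear BSDE (\ref{2order-mall-bsde}), with right-hand side controlled uniformly in $(v, u)$ via Lemma \ref{aux:lemma-finite-RHS}, gives
\[\sup_{0\le v,u\le T}\,\|D_v D_u Y\|_{\cS^{2p}}^{2p} < \infty, \qquad p\ge 1,\]
that is, the sup-in-$t$ moment pointwise in $v$. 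The delicate step is promoting this to a joint $(v, t)$ sup, for which I would invoke Kolmogorov's continuity criterion in its form applicable to $C([0, T])$-valued processes. The $v$-increment estimate
\[\|D_v D_u Y - D_{v'} D_u Y\|_{\cS^{2p}}^{2p} \le C_p |v - v'|^p, \qquad p \ge 1,\]
which appears inside the proof of Theorem \ref{DY-continuity}, holds for every $p$, so the H\"older exponent can be made arbitrarily close to $1/2$ and the Kolmogorov argument produces a finite moment bound on $\E[\sup_{v, t} |D_v D_u Y_t|^{2p}]$, uniformly in $u$. This dominates $\E[\sup_t |D_u Z_t|^{2p}]$ and concludes the proof. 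The main obstacle is precisely this last passage: the BSDE estimate only delivers $v$-wise sup-in-$t$ control, while $D_u Z_t = D_t D_u Y_t$ demands evaluation on the diagonal $v = t$; the Kolmogorov continuity argument, made possible by the availability of $v$-H\"older moment estimates with arbitrarily large exponent, is what closes this gap.
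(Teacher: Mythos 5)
Your argument is correct and, for the first two assertions, coincides with the paper's proof: the representation $D_uY_t=\nabla Y_t(\nabla X_u)^{-1}\sigma(u,X_u)$, H\"older's inequality, and the moment bounds (\ref{supnorm-X}), (\ref{supnorm-gradX}); the only difference is that you rederive $\|\nabla Y\|_{\cS^{2p}}<\infty$ from Lemma \ref{apriori.estimate} applied to (\ref{diff-bsde}), whereas the paper simply cites Theorem \ref{theo:1st-order-diff-qgbsde}, which already contains this statement. For the third assertion the skeleton is again the same --- the uniform bound $\sup_{0\le v,u\le T}\E[\sup_{0\le t\le T}|D_vD_uY_t|^{2p}]<\infty$ obtained from Lemma \ref{apriori.estimate} applied to (\ref{2order-mall-bsde}) together with Lemma \ref{aux:lemma-finite-RHS}, plus the identification of $D_uZ_t$ with $D_tD_uY_t$ --- but you treat the diagonal evaluation more carefully than the paper does: the paper dispatches it with the single phrase ``by the continuity result of Theorem \ref{DY-continuity} we may choose $u=t$'', which, taken literally, is not sufficient, since a bound on $\sup_{v}\E[\sup_t|D_vD_uY_t|^{2p}]$ does not by itself dominate $\E[\sup_t|D_tD_uY_t|^{2p}]$. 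Your Kolmogorov argument in the $v$-variable, based on the increment estimate $\E[\sup_t|D_vD_uY_t-D_{v'}D_uY_t|^{2p}]\le C_p|v-v'|^{p}$ (available for every $p\ge 1$, with $C_p$ uniform in $u$, inside the proof of Theorem \ref{DY-continuity}), upgrades the pointwise-in-$v$ bound to $\sup_{u}\E[\sup_{v,t}|D_vD_uY_t|^{2p}]<\infty$ and thereby makes the passage to the diagonal rigorous; this is precisely the justification the paper leaves implicit, so your route buys a fully explicit argument at the cost of a little extra work, while otherwise following the paper's proof.
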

\begin{proof}
As we have seen in Theorem \ref{DY-continuity}, a continuous version
of $(u,t)\mapsto D_uY_t$ is given by $\nabla_x Y_t (\nabla_x
X_u)^{-1}\sigma(u,X_u)$. Hence we may estimate
\begin{align*}
\E\Big[ \sup_{0\leq u\leq t\leq T} |D_uY_t|^{2p} \Big]
&\leq \E\Big[ \sup_{0\leq t\leq T} |\nabla_x Y_t|^{2p} \sup_{0\leq u\leq T} \big\{|(\nabla_x X_u)^{-1}\sigma(u,X_u)|^{2p} \big\}\Big]\\
& \leq \E\Big[ \sup_{0\leq t\leq T} |\nabla_x Y_t|^{6p}\Big]^{\frac13}
\E\Big[ \sup_{0\leq u\leq T} |(\nabla_x X_u)^{-1}|^{6p}\Big]^{\frac13}
\E\Big[ \sup_{0\leq u\leq T} |\sigma (X_u)|^{6p}\Big]^{\frac13}\\
&<\infty
\end{align*}
The last line follows from the fact that $\nabla Y$, $(\nabla
X)^{-1}$ and $X$ all belong to $\cS^{2p}$ for all $p\geq 1$ (see
(\ref{supnorm-X}), (\ref{supnorm-gradX}) and Theorem
\ref{theo:1st-order-diff-qgbsde}). This concludes the first part of
the proof. The second claim follows as a special case of the first
by identifying $u$ and $t$.

For the third statement, note that the proof of Theorem
\ref{theo:2-order-mall} (see also the proof of
\ref{appendixB:2-order-mall-diff-lip-BSDE}) yields
\[\sup_{0\leq v,u\leq T} \E\Big[ \sup_{0\leq t\leq T} |D_vD_u Y_t|^{2p} \Big]<\infty, \quad p\geq 1.  \]
By the continuity result of Theorem \ref{DY-continuity} we may
choose $u=t$ to obtain
\[\sup_{0\leq u\leq T} \E\Big[ \sup_{0\leq t\leq T} |D_u Z_t|^{2p} \Big]<\infty, \quad p\geq 1.  \]
\end{proof}

\subsection{A path regularity theorem}

In the previous subsection we deduced the continuity property of $Z$
and estimated moments of its supremum over the interval $[0,T]$. Here,
we aim at providing a Kolmogorov continuity type estimate for $Z$.
The inequality we will obtain will imply an improvement of the well
known path regularity result stated in \cite{phd-zhang} and \cite{Zhang2004}.

Let $\Pi$ be the collection of all partitions of the interval $[0,T]$ by finite families of real numbers. Particular partitions will be denoted by $\pi=\{t_i: 0=t_0<\ldots<t_N=T\}$ with $N\in\IN$.
We define the mesh size of partition $\pi$ as
$\Delta^\pi=\Delta=\max_{0\leq i\leq N}|t_{i+1}-t_i|$.

For reference purposes and before approaching the path regularity
theorem we recall an elementary inequality: for real numbers $a_i,
1\le i\le n,$ and $p\ge 1$ we have
\begin{align}
\label{ineq:discrete-newton} \sum_{i=1}^n |a_i|^p & \leq  \Big( \sum_{i=1}^n |a_i| \Big)^p.
\end{align} We start by stating an auxiliary lemma.
\begin{lemma} Assume {\bf HX0} and {\bf HY0}. Then for the solutions of BSDE (\ref{fbsde-sde}), (\ref{fbsde-bsde}) and for any $p\geq 2$ there exists a pair of constants $A_p,\,C_p$ depending on $T$, $M$ and $p$ such that
\begin{align}\label{delta-t-Y}
\E[\sup_{s \leq u\leq t} |Y_u-Y_s|^p \,]&\leq
C_p \Big\{ A_p |t-s|^p+\E\Big[ \Big(\int_s^t|Z_v|^2\ud v\Big)^{p}+ \Big(\int_s^t|Z_v|^2 \ud v\Big)^{p/2}\Big]\Big\}.
\end{align}
\end{lemma}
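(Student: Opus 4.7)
The plan is to derive the bound directly from the BSDE by writing the increment $Y_u - Y_s$ as the sum of a stochastic integral and a drift term, then estimating each separately. Under {\bf HX0} and {\bf HY0}, Theorem \ref{theo:moment-estimates-special-class} guarantees that $Y \in \cS^\infty$ with norm controlled by the constants $T, K, M$, so $\|Y\|_{\cS^\infty}$ can be absorbed into the constant $A_p$.

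First, I would fix $0 \le s \le t \le T$ and rewrite (\ref{fbsde-bsde}) to give, for $u \in [s,t]$,
\[
Y_u - Y_s \;=\; \int_s^u Z_v\,\ud W_v \;-\; \int_s^u f(v,\Theta_v)\,\ud v.
\]
Applying the elementary inequality $|a+b|^p \le 2^{p-1}(|a|^p + |b|^p)$, taking the supremum over $u \in [s,t]$ and then expectations, the estimation of $\E[\sup_{s \le u \le t}|Y_u - Y_s|^p]$ splits into bounding the martingale part and the finite-variation part.

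For the martingale part, Burkholder--Davis--Gundy immediately yields
\[
\E\Bigl[\,\sup_{s \le u \le t}\Bigl|\int_s^u Z_v\,\ud W_v\Bigr|^p\,\Bigr] \;\le\; C_p\,\E\Bigl[\Bigl(\int_s^t |Z_v|^2\,\ud v\Bigr)^{p/2}\Bigr],
\]
which is the second term in the claimed bound. For the drift part, I invoke the quadratic growth assumption in {\bf HY0}, namely $|f(v,\Theta_v)| \le M(1 + |Y_v| + |Z_v|^2)$. Hence
\[
\Bigl|\int_s^u f(v,\Theta_v)\,\ud v\Bigr| \;\le\; M\,|t-s|\,(1 + \|Y\|_{\cS^\infty}) \;+\; M\int_s^t |Z_v|^2\,\ud v,
\]
so raising to the $p$-th power, taking the sup over $u$, and taking expectations produces the $|t-s|^p$ term (the constant $A_p$ collecting $M^p(1 + \|Y\|_{\cS^\infty}^p)$, finite by Theorem \ref{theo:moment-estimates-special-class}) plus a term bounded by $C_p\, M^p\,\E[(\int_s^t |Z_v|^2\,\ud v)^p]$, which is the remaining summand.

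Summing the two contributions yields (\ref{delta-t-Y}). There is no real obstacle here; the only point to notice is that the quadratic growth in $z$ forces the appearance of the $(\int_s^t |Z_v|^2\,\ud v)^p$ term in addition to the BDG-type $(\int_s^t |Z_v|^2\,\ud v)^{p/2}$ term. Both are finite for every fixed $p \ge 2$ because $Z*W \in BMO$ by Theorem \ref{theo:moment-estimates-special-class}, together with part 4) of Lemma \ref{bmoeigen}, although finiteness is not needed for the stated pointwise inequality.
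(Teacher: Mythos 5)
Your proposal is correct and follows essentially the same route as the paper: decompose the increment $Y_u-Y_s$ into its martingale and Lebesgue-integral parts, apply Burkholder--Davis--Gundy to the former, and use the growth bound $|f(\cdot,\cdot,y,z)|\leq M(1+|y|+|z|^2)$ together with the boundedness of $Y$ from Theorem \ref{theo:moment-estimates-special-class} for the latter. The explicit splitting of the drift estimate into the $|t-s|^p$ and $(\int_s^t|Z_v|^2\ud v)^p$ contributions is exactly what the paper's proof does implicitly.
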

\begin{proof}
First estimate increments of $Y$ by the sum of a Lebesgue and It\^o
integral provided by (\ref{fbsde-bsde}), maximize in $s\le u\le t$,
and apply Doob's and Burkholder-Davis-Gundy's inequalities to the
martingale part to obtain for $p\geq 2$
\begin{align*}
\E[\sup_{s\leq u\leq t} |Y_u-Y_s|^p\, ]\leq C_p \E\Big[ \Big(\int_s^t |f(v, X_v, Y_v, Z_v)|\ud v\Big)^p + \Big(\int_s^t |Z_v|^2 \ud v\Big)^{\frac{p}{2}}\Big].
\end{align*}
Next use the growth condition valid for $f$, i.e.
$|f(\cdot,\cdot,y,z)|\leq M(1+|y|+|z|^2)$ together with the fact
that $Y$ is bounded, to obtain the claimed result.
\end{proof}

Let us now state our path regularity theorem.
\begin{theo}[Path regularity]\label{theo:path-reg} Under {\bf HX1} and {\bf HY1}, the FBSDE system (\ref{fbsde-sde}), (\ref{fbsde-bsde}) has a unique solution $(X,Y,Z)\in\cS^{2p}\times\cS^\infty\times\cH^{2p}$ for
all $p\geq 1$. Moreover, the following holds true:
\bit
\item[i)]
For $p\geq 2$ there exists a constant $C_p>0$ such that for $0\leq
s\leq t\leq T$
 we have
\[\E[\sup_{s\leq u\leq t} |Y_u-Y_s|^p \,]\leq C_p |t-s|^{\frac{p}{2}}.\]

\item[ii)]
For all $p\geq 1$ there exists a constant $C_p>0$ such that for any
partition $\pi$ of $[0,T]$ with mesh size $\Delta$
\[
\sum_{i=0}^{N-1} \E\Big[\Big(\int_{t_i}^{t_{i+1}}|Z_t-Z_{t_i}|^2\udt\Big)^p\Big]\leq C_p \Delta^p.
\]
\eit
Under {\bf HX2} and {\bf HY2}, we further have:
\bit
\item[iii)]
For all $p\geq 2$ there exists a constant $C_p>0$ such that for
$0\leq s\leq t\leq T$

\[\E[\sup_{s\leq u\leq t} |Z_u-Z_s|^p \,]\leq C_p |t-s|^{\frac{p}{2}}.\]
In particular, the process $Z$ has a continuous modification.
\eit
\end{theo}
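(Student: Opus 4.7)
The plan is to work with the continuous version of $Z$ provided by the trace $Z_t = D_tY_t$ of the first Malliavin derivative, together with the explicit representation
\[
D_uY_r = \nabla Y_r\,(\nabla X_u)^{-1}\sigma(u,X_u),\qquad 0\le u\le r\le T,
\]
from Theorem \ref{theo:bsde-1d-mall-diff}, and the joint continuity of $(u,r)\mapsto D_uY_r$ established in Theorem \ref{DY-continuity}. For $0\le s\le u\le t\le T$ I would decompose
\begin{align*}
Z_u - Z_s \;=\; \underbrace{D_uY_u - D_sY_u}_{(\mathrm{I})} \;+\; \underbrace{D_sY_u - D_sY_s}_{(\mathrm{II})},
\end{align*}
and treat the two brackets by different mechanisms: $(\mathrm{I})$ through the explicit representation, $(\mathrm{II})$ through the linear BSDE satisfied by $u\mapsto D_sY_u$.

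For $(\mathrm{I})$, the only dependence on the Malliavin variable $u$ sits in the factor $(\nabla X_u)^{-1}\sigma(u,X_u)$, so
\[
D_uY_u - D_sY_u = \nabla Y_u\bigl[(\nabla X_u)^{-1}\sigma(u,X_u) - (\nabla X_s)^{-1}\sigma(s,X_s)\bigr].
\]
Using $\nabla Y\in\cS^q$ for every $q$ (Theorem \ref{theo:1st-order-diff-qgbsde}), $(\nabla X)^{-1}\in\cS^q$ by (\ref{supnorm-gradX}), the H\"older-$\tfrac12$ regularity estimates (\ref{delta-X}) and (\ref{delta-nablaX-1}), and the Lipschitz property of $\sigma$, H\"older's inequality produces $\E[\sup_{s\le u\le t}|(\mathrm{I})|^p]\le C_p|t-s|^{p/2}$.

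For $(\mathrm{II})$, fix $s$ and apply BSDE (\ref{aux:1order-mall-dif-bsde}) to $u\mapsto D_sY_u$ on $[s,T]$:
\begin{align*}
D_sY_u - D_sY_s \;=\; \int_s^u D_sZ_r\,\ud W_r \;-\; \int_s^u \langle \nabla f(r,\Theta_r), D_s\Theta_r\rangle\,\ud r.
\end{align*}
For the stochastic integral, Burkholder--Davis--Gundy combined with the pointwise bound $\int_s^u|D_sZ_r|^2\ud r\le (t-s)\sup_r|D_sZ_r|^2$ reduces the task to controlling $\sup_s\E[\sup_t|D_sZ_t|^{2p}]$, which is precisely the uniform estimate in the third statement of Theorem \ref{Z-sinfty-bounds}, valid under {\bf HX2} and {\bf HY2}. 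For the drift, the growth conditions of {\bf HY1} on $\nabla f$, combined with $Y\in\cS^\infty$, $Z\in\cS^q$ (Theorem \ref{Z-sinfty-bounds}), and the $\cS^q$-estimates on $D_sX$, $D_sY$ and $D_sZ$ uniformly in $s$, dominate the integrand in sup-norm by a random variable with finite $p$-th moment, yielding a contribution of order $|t-s|^p=O(|t-s|^{p/2})$ on $[0,T]$.

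Summing the bounds for $(\mathrm{I})$ and $(\mathrm{II})$ gives $\E[\sup_{s\le u\le t}|Z_u-Z_s|^p]\le C_p|t-s|^{p/2}$ for every $p\ge 2$; the continuous modification is already part of Theorem \ref{DY-continuity}, but also drops out of Kolmogorov--Chentsov. The main obstacle is the stochastic integral in $(\mathrm{II})$: pulling out the rate $(t-s)^{1/2}$ hinges on an $\cS^p$-bound for the second Malliavin object $D_sZ$ that is \emph{uniform in the Malliavin parameter} $s$. Such a bound is not available under {\bf HX1}, {\bf HY1} alone and is the structural reason that statement $(\mathrm{iii})$ is formulated under the stronger hypotheses {\bf HX2}, {\bf HY2}, drawing crucially on the second-order Malliavin machinery developed in Section \ref{section:3}.
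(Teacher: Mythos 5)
Your argument addresses only statement \emph{iii)} of the theorem; the solvability claim and parts \emph{i)} and \emph{ii)} are not touched at all, and part \emph{ii)} is the core of the result (the Zhang-type $L^2$-regularity estimate on partitions). That part does not follow from the ideas you set up: the paper proves it by writing $Z_t-Z_{t_i}=I_1+I_2+I_3$ via the representation $Z_t=\nabla Y_t(\nabla X_t)^{-1}\sigma(X_t)$, bounding $I_2,I_3$ with the SDE estimates (\ref{delta-X}), (\ref{delta-nablaX-1}), and handling $I_1$ through the linear BSDE (\ref{diff-bsde}) for $\nabla Y$, where the key point is that after summing over the partition intervals and using (\ref{ineq:discrete-newton}) one only needs $\nabla Z\in\cH^{2p}$ on $[0,T]$ --- no H\"older-in-time estimate on $\nabla Y$ or $\nabla Z$ is available or needed, which is exactly why \emph{ii)} holds already under {\bf HX1}, {\bf HY1}. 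Part \emph{i)} likewise needs its own (short) argument combining the auxiliary estimate (\ref{delta-t-Y}) with $Z\in\cS^{2p}$ from (\ref{Z-moment-time}). As a blind proof of the stated theorem, this is therefore a substantial gap.

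For the part you do prove, your route is essentially the paper's: the same splitting $Z_u-Z_s=(D_uY_u-D_sY_u)+(D_sY_u-D_sY_s)$, with the second bracket treated through the linearized BSDE (\ref{aux:1order-mall-dif-bsde}), Doob/Burkholder--Davis--Gundy, and the uniform-in-$s$ bound $\sup_s\E[\sup_t|D_sZ_t|^{2p}]<\infty$ of Theorem \ref{Z-sinfty-bounds}, which is indeed the structural reason for requiring {\bf HX2}, {\bf HY2}. Your treatment of the first bracket via the representation $D_uY_r=\nabla Y_r(\nabla X_u)^{-1}\sigma(u,X_u)$ is a small variant of the paper, which instead cites Lemma \ref{kolm-for-DY-DZ}; your version has the minor advantage of controlling the supremum over $u$ (where $u$ enters the Malliavin slot), whereas the paper contents itself with the pointwise-in-$(s,t)$ moment bound and then invokes Kolmogorov's criterion. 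Either way this piece is sound, so the issue is not correctness of \emph{iii)} but the missing proofs of \emph{i)}, \emph{ii)} and the existence statement.
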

\begin{proof}

\emph{Part i)}: Under the hypotheses we can make use of
Theorem \ref{Z-sinfty-bounds}. In fact, combining
(\ref{Z-moment-time}) with (\ref{delta-t-Y}) we get
\begin{align*}
&\E[\sup_{s\leq u\leq t}|Y_u-Y_{s}|^p]\\
&\qquad\leq C\Big\{ |t-s|^p+\E\Big[
|t-s|^p \sup_{s\leq u\leq t} |Z_u|^{2p}+|t-s|^{\frac{p}{2}}\sup_{s\leq u\leq t}|Z_u|^{p}
\Big]\Big\}\\
&\qquad\leq C_p \Big\{ |t-s|^p+ |t-s|^{\frac{p}{2}}\Big\}.
\end{align*} The result follows.

\emph{Part ii)}: Theorem \ref{Z-sinfty-bounds} states that
$Z\in\cS^{2p}$. Therefore we are able to write, using Jensen's
inequality
\begin{align}\label{localaux:xpto}
\E\Big[\Big(\int_{t_i}^{t_{i+1}}|Z_t-Z_{t_i}|^2\udt\Big)^p\Big]\leq
\Delta^{p-1}\int_\ti^\tip\E[\,|Z_t-Z_{t_i}|^{2p}\,]\udt.\end{align}
In view of Theorem \ref{DY-continuity} and the subsequent
representation formula for $Z$ in terms the Malliavin derivatives of
$Y$ (see (\ref{z-rep})), we find an alternative way to express the
difference $Z_t-Z_{t_i}$ for $t\in[t_i, t_{i+1}]$ by writing
\begin{align}
Z_t-Z_\ti &= \nabla Y_t (\nabla X_t)^{-1}\sigma(X_t)-\nabla Y_\ti(\nabla X_\ti)^{-1}\sigma(X_\ti)
= I_1+I_2+I_3,\label{aux:loc4321}
\end{align} where $I_1=\Big(\nabla Y_t-\nabla Y_\ti\Big) (\nabla X_t)^{-1} \sigma(X_t)$,
$I_2=\nabla Y_\ti \Big( (\nabla X_t)^{-1}-(\nabla X_\ti)^{-1}\Big) \sigma(X_t)$ and $I_3=\nabla Y_\ti (\nabla X_\ti)^{-1}\Big(\sigma(X_t)-\sigma(X_\ti)\Big)$.

Estimates for $I_2$ and $I_3$ are easy to produce since they rely
mainly on $\|\nabla Y\|_{\cS^{2p}}<\infty$ and the results presented
in subsection \ref{subsec:sde-results}. We give details for $I_2$
and hints how to deal with $I_3$, remarking that its treatment is
very similar. H\"older's inequality combined with the growth
condition of $\sigma$ produce
\begin{align}
\E[\,|I_2|^{2p}]& \leq  C\,
\E\Big[\sup_{0\leq u\leq T}|\nabla_x Y_u|^{6p}\,\Big]^{\frac{1}{3}} \E\Big[\sup_{t_i\leq t\leq t_{i+1}}|(\nabla X_t)^{-1}-(\nabla X_\ti)^{-1}|^{6p}\,\Big]^{\frac{1}{3}}\,
\E\Big[\sup_{0\leq u\leq T}|X_u|^{6p}\,\Big]^{\frac{1}{3}}\nonumber \\
&\leq C\, \Delta^{{3p}\frac{1}{3}}=C\, \Delta^{p}\label{aux:loc123}.
\end{align}
For the last line we use (\ref{supnorm-X}), (\ref{delta-nablaX-1})
and $\|\nabla Y\|_{\cS^{2p}}<\infty$. For  $I_3$, the method is
similar: instead of (\ref{supnorm-X}) and (\ref{delta-nablaX-1}) we
have to use (\ref{delta-X}) and (\ref{supnorm-gradX}).

We next estimate $I_1$. Using Fubini's Theorem and H\"older's
inequality we get
\begin{align*}
&\int_\ti^\tip\IE\big[\,|I_1|^{2p}\,\big]\ud t = \IE\big[\,\int_\ti^\tip|I_1|^{2p}\ud t\,\big]\\
&\qquad\leq
\IE\Big[\int_\ti^\tip |(\nabla X_t)^{-1}|^{4p} \ud t\Big]^{\frac12}\,
\IE\Big[\int_\ti^\tip |\sigma(X_t)|^{4p} \ud t\Big]^{\frac12}\IE\Big[\sup_{\ti\leq t\leq \tip}|\nabla Y_t -\nabla Y_\ti |^{2p}\Big].
\end{align*}
We can simplify the integral terms by estimating the integrands by
their suprema over the intervals. Using the linear growth condition
on $\sigma$ combined with (\ref{supnorm-X}), (\ref{supnorm-gradX}),
we show in this way that the first two expectations on the right
hand side are bounded by $C \Delta^{1/2}$ each. Applying an
appropriate version of (\ref{localaux:xpto}), and using the previous
inequalities, we infer
\begin{align*}
\Delta^{p-1}\sum_{i=0}^{N-1}\int_\ti^\tip\IE\big[\,|I_1|^p\,\big]\ud t
&\leq
C \Delta^p \sum_{i=0}^{N-1} \IE\Big[\sup_{\ti\leq t\leq \tip}|\nabla Y_t -\nabla Y_\ti |^{2p}\Big].
\end{align*}
It remains to estimate $\nabla Y_t -\nabla Y_\ti$ for $t\in[t_i,
t_{i+1}]$ using the BSDE (\ref{diff-bsde}). For $p\geq 1$, the
inequalities of Doob and Burkholder-Davis-Gundy combine with {{\bf
HX1}} and {{\bf HY1}} in the same fashion as in part \emph{i)} to
yield for $\Theta=(X,Y,Z)$ and $\nabla \Theta = (\nabla X,\nabla
Y,\nabla Z)$
\begin{align*}
&\sum_{i=0}^{N-1}\IE\Big[\sup_{\ti\leq t\leq \tip}|\nabla Y_t -\nabla Y_\ti |^{2p}\Big]\\
&\quad\leq C\sum_{i=0}^{N-1}\IE\Big[
\Big(\int_\ti^\tip
|\langle (\nabla f)(s,\Theta_s), \nabla \Theta_s \rangle| \ud s\Big)^{2p} +\Big(\int_\ti^\tip |\nabla Z_s|^2\ud s\Big)^{p}\Big]\\
&\quad \leq C\,\IE\Big[ \Big(\int_0^T|\langle (\nabla
f)(s,\Theta_s), \nabla \Theta_s \rangle| \ud
s\Big)^{2p}+\Big(\int_0^T |\nabla Z_s|^2\uds\Big)^{p} \Big].
\end{align*}
For the last line we interchange summation and expectation and apply
(\ref{ineq:discrete-newton}). We now use the growth condition of
{\bf HY1} combined with the fact that $X,Y,Z,\nabla X, \nabla
Y\in\cS^{2p}$ and $\nabla Z\in \cH^{2p}$. Therefore
\begin{align*}
\sum_{i=0}^{N-1}\IE\Big[\sup_{\ti\leq t\leq \tip}|\nabla Y_t -\nabla
Y_\ti |^{2p}\Big]<\infty,
\end{align*}
which obviously implies
\begin{align*}
\Delta^{p-1}\sum_{i=0}^{N-1}\int_\ti^\tip\IE\big[\,|I_1|^p\,\big]\ud
t \leq C \Delta^p.
\end{align*}
Finally we inject (\ref{aux:loc123}) and the above inequality into
(\ref{localaux:xpto}) (according to (\ref{aux:loc4321})), to obtain
the second assertion of the Theorem:
\begin{align*}
\sum_{i=0}^{N-1} \E\Big[\Big(\int_{t_i}^{t_{i+1}}|Z_t-Z_{t_i}|^2\udt\Big)^p \Big] & \leq C \Delta^{p-1}\sum_{i=0}^{N-1} \int_{t_i}^{t_{i+1}}\E\big[\,|I_1|^{2p}+|I_2|^{2p}+|I_3|^{2p}\big]\udt\\
& \leq C \Delta^{p-1}\big(\Delta+ 2  \Delta^p \big)\leq C \Delta^p.
\end{align*}

\emph{Part iii)}: Theorem \ref{DY-continuity} states the map $t\mapsto
D_tY_t$ is a continuous version of $Z$. Hence we are able to express for
$s,t\in[0,T]$ the difference $Z_t-Z_s$ by Malliavin derivatives of
$Y$, and its moments for $p\ge 2$ by
\[ E[\,|Z_t-Z_s|^p]\leq C (E[\,|D_tY_t-D_sY_t|^p]+ E[\,|D_sY_t-D_sY_s|^p]),\quad \textrm{with }s\leq t. \]
We estimate both expressions on the right hand side separately. The arguments we use are similar to the ones in \emph{Part ii)}.

From Lemma \ref{kolm-for-DY-DZ} we have $\IE[\,|D_tY_t-D_sY_t|^p\,]\leq C|t-s|^{\frac{p}{2}}$. For the other term, a simple calculation using BSDE
(\ref{aux:1order-mall-dif-bsde}) yields
\begin{align*}
D_s Y_t-D_s Y_s
= \int_s^t \big[\langle \nabla f(\Theta_u), D_s \Theta_u \rangle\big]\ud u-\int_s^tD_s Z_u\ud W_u.
\end{align*}
By Doob's and Burkholder-Davis-Gundy's inequalities we have for
$p\geq 2$
\begin{align*}
\E[\sup_{s\leq u\leq t}|D_s Y_u-D_s Y_s|^p]&\leq C\,\IE\Big[\Big(\int_s^t \big[ (1+|Y|+|Z|^2)|D_sX_u|+|D_sY_u|\\
&\hspace{2cm}+ (1+|Z_u|)\,|D_sZ_u|\,\big]\ud u\Big)^p+\Big(\int_s^t |D_s Z_u|^2\ud u\Big)^{\frac{p}{2}}\Big]\\
&\leq C\,\big\{ |t-s|^p+|t-s|^{\frac{p}{2}}\big\}.
\end{align*}
This last line follows, because all the integrand processes belong
to $\cS^{p}$ for all $p\geq 2$ (see Theorem \ref{Z-sinfty-bounds}).
Combining the two above estimates we have $E[\,|Z_t-Z_s|^p]\leq
C\,|t-s|^{\frac{p}{2}}$ as intended. Kolmogorov's continuity
criterion yields the continuity statement.
\end{proof}

\subsection{The path regularity Theorem for qgBSDE}

Now let $\pi$ be a partition of the interval $[0,T]$ with $N$ points and mesh size $|\pi|$. We define a set of random variables
\begin{align}\label{Z-bar-ti-pi}
\bar{Z}^\pi_\ti&=\frac1h\E\Big[\int_\ti^\tip
Z_s\uds\big|\cF_\ti\Big], \textrm{ for all partition
points } t_i,\ 0\le i\le N-1,
\end{align}
where $Z$ is the control process in the solution of qgFBSDE (\ref{fbsde-sde}), (\ref{fbsde-bsde}) under {\bf HX0} and {\bf HY0}. It is not difficult to show that $\bar{Z}^\pi_\ti$ is the best $\cF_\ti$-adapted $\cH^2([\ti,\tip])$ approximation of $Z$, i.e.
\[
\E\Big[ \int_\ti^\tip |Z_s-\bar{Z}^\pi_\ti|^2 \uds\Big]= \inf_{Z_i \in L^2(\Omega,\cF_\ti)} \IE\Big[\int_\ti^\tip |Z_s-Z_i|^2 \uds\Big].
\]
Let now $\bar{Z}^\pi_t = \bar{Z}^\pi_\ti$ for $t\in[\ti, \tip[, 0\le i\le N-1.$ It is equally easy to see that $\bar Z^\pi$ converges to $Z$ in $\cH^2$ as $|\pi|$ vanishes: since $Z$ is adapted there exists an adapted family of processes $Z^\pi$ indexed by our partition such that $Z^\pi_t=Z_\ti$ for $t\in [\ti,\tip)$ and that $Z^\pi$ converges to
$Z$ in $\cH^2$ as $|\pi|$ goes to zero. Since $\{\bar Z^\pi\}$ is the best $\cH^2$-approximation of $Z$, we obtain
\[
\| Z-\bar Z^\pi  \|_{\cH^2}\leq \|Z-Z^\pi \|_{\cH^2}\to 0,\ \textrm{ as }\ h\to 0.
\]
As an immediate corollary of \emph{ii)} in the previous Theorem we get the extension to the setting of drivers with quadratic growth of the famous {Theorem 3.4.3} in \cite{phd-zhang}.
Let $p=1$ in Theorem \ref{theo:path-reg}. Then
\begin{theo}\label{zhangs-path-reg-theo-to-qgfbsde}Assume {\bf HX1} and {\bf HY0}. Assume further that condition (\ref{unif-ellipt}) holds and that $g$ satisfies a standard Lipschitz condition with Lipschitz constant $M$. Then there exists a constant $C$
such that for any partition $\pi = \{t_0<\cdots<t_N\}$ of the interval $[0,T]$ with mesh size $|\pi|$ we have

\[
\max_{0\leq i\leq N-1}\Big\{\sup_{ t\in [\ti,\tip)} \E\Big[\,|Y_t-Y_\ti|^2\Big]\Big\}+
\sum_{i=0}^{N-1} \E\Big[\int_\ti^\tip |Z_s-\bar{Z}^\pi_\ti|^2\uds \Big]\leq C |\pi|.
\]
\end{theo}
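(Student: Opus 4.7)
First I would prove the claim under the stronger hypothesis \textbf{HX1} and \textbf{HY1}, where the result follows almost directly from Theorem \ref{theo:path-reg} combined with the $L^2$-projection property of $\bar Z^\pi$ recorded just before the statement. For the $Y$-part, Jensen's inequality and part \emph{i)} of Theorem \ref{theo:path-reg} with $p=2$ give
\[
\sup_{t\in[\ti,\tip)} \E\big[|Y_t-Y_\ti|^2\big] \leq \E\Big[\sup_{t\in[\ti,\tip)}|Y_t-Y_\ti|^2\Big] \leq C(\tip-\ti) \leq C|\pi|.
\]
For the $Z$-part, the variational characterization of $\bar Z^\pi_\ti$ as the best $\cF_\ti$-measurable $\cH^2([\ti,\tip])$-approximation of $Z$ yields
\[
\sum_{i=0}^{N-1}\E\Big[\int_\ti^\tip |Z_s-\bar Z^\pi_\ti|^2 \uds\Big] \leq \sum_{i=0}^{N-1}\E\Big[\int_\ti^\tip |Z_s-Z_\ti|^2 \uds\Big],
\]
and the right-hand side is bounded by $C|\pi|$ by part \emph{ii)} of Theorem \ref{theo:path-reg} with $p=1$. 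Summing the two contributions closes the proof under \textbf{HY1}.

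To extend the estimate to \textbf{HY0}, I would regularize $f$ and $g$ by mollifying in the spatial variables, producing $(f^n,g^n)$ with $f^n$ smooth in $(x,y,z)$ and $g^n\in C^2_b$, both enjoying the bounds of \textbf{HY1} with constants dominated by a fixed multiple of the \textbf{HY0} constant $M$, and such that $f^n\to f$ and $g^n\to g$ locally uniformly. Let $(Y^n,Z^n)$ denote the associated qgBSDE solutions, to which the estimate already proven applies. The crux is to show that the resulting constant $C_n$ can be chosen independent of $n$: this reduces to uniform control of $\|Z^n*W\|_{BMO}$, which is provided by Theorem \ref{theo:moment-estimates-special-class} since $\|g^n\|_\infty\leq\|g\|_\infty$ and $M$ are preserved, together with uniform control of $\|\nabla Y^n\|_{\cS^{2p}}$ and $\|Z^n\|_{\cS^{2p}}$ entering Theorem \ref{Z-sinfty-bounds}. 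The latter follows from Lemma \ref{apriori.estimate} applied to the linear BSDE (\ref{diff-bsde}) with the uniform \textbf{HY1} bounds on $\nabla f^n$ and the uniform BMO bound on $\nabla_z f^n(\cdot,\Theta^n)*W$, combined with the representation $Z^n=\nabla Y^n (\nabla X)^{-1}\sigma(X)$ and the uniform ellipticity (\ref{unif-ellipt}). Once uniformity in $n$ is established, Kobylanski-type stability for qgBSDE (see \cite{00Kob}, \cite{phd-dosReis}) gives $(Y^n,Z^n)\to (Y,Z)$ in $\cS^2\times\cH^2$, and Fatou's lemma transfers the bound to the limit.

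The main obstacle is the uniformity step: every constant that implicitly appears in the proof of Theorem \ref{theo:path-reg} (the $\cS^{2p}$ norms of $\nabla Y^n$ and $Z^n$, the BMO norm of $\nabla_z f^n(\cdot,\Theta^n)*W$, and the H\"older exponent $\barq$ attached to it) must be shown to depend only on the \textbf{HY0} constants. The uniform ellipticity (\ref{unif-ellipt}) and the Lipschitz bound $|\nabla g^n|\leq M$ play a key role here, as they are what allows the inversion of the Malliavin representation of $Z^n$ and the uniform control of the terminal condition $\nabla g^n(X_T)\nabla X_T$ in (\ref{diff-bsde}). Verifying uniformity of $\barq$ reduces, via Remark \ref{def:r-bar} and Lemma \ref{bmoeigen}, to the uniform BMO bound above; the remaining estimates in Theorem \ref{theo:path-reg}'s proof then chain through with constants independent of the regularization index.
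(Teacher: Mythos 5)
Your proposal is correct and follows essentially the same route as the paper: deduce the \textbf{HY1} case from parts \emph{i)} and \emph{ii)} of Theorem \ref{theo:path-reg} together with the $L^2$-projection property of $\bar Z^\pi$, then regularize $f$ and $g$, check that the constants (BMO norm of $Z^\varepsilon*W$, the exponent $\barq$, the $\cS^{2p}$ bounds on $\nabla Y^\varepsilon$ and $Z^\varepsilon$) are uniform in the regularization parameter, use the comparison estimate to get convergence of $(Y^\varepsilon,Z^\varepsilon)$ to $(Y,Z)$, and pass to the limit by Fatou. In fact you make explicit the uniformity verification that the paper compresses into ``a careful inspection of the arguments'' of Theorem \ref{theo:path-reg}, which is a welcome clarification rather than a deviation.
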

\begin{proof}
Let $(f^\varepsilon)_{\varepsilon>0}$ and $(g^\varepsilon)_{\varepsilon>0}$ be two families of $C^\infty_0$ functions obtained by canonically regularizing $f$ and $g$ respectively,
and such that
\[
\lim_{\varepsilon\to0} \Big\{
\sup_{(t,x,y,z)\in[0,T]\times \IR^m\times \IR \times \IR^d} |f^\varepsilon(t,x,y,z)-f(t,x,y,z)|
+\sup_{x\in\IR^m} |g^\varepsilon(x)-g(x)|
\Big\}=0.
\]
Since $f$ satisfies condition {\bf HY0} and $g$ is uniformly Lipschitz, for any $\varepsilon>0$ both $g^\varepsilon$ and $f^\varepsilon$ satisfy condition {\bf HY1}. Let us denote $(X,Y^\varepsilon,Z^\varepsilon)$ the solution of (\ref{fbsde-sde}) and \[
Y^\varepsilon_t = g^\varepsilon(X_T)
+\int_t^T f^\varepsilon(s, X_s,Y^\varepsilon_s, Z^\varepsilon_s)\uds -\int _t^T Z^\varepsilon_s \udws,\quad t\in[0,T],\ \varepsilon>0.
\]
Then all the results of section \ref{sec:reg-in-time} proved so far hold for the pair $(Y^\varepsilon, Z^\epsilon)_{\varepsilon>0}$. Using Lemma \ref{comparison.theo} we can conclude that for any $p\geq 1$
\[
\lim_{\varepsilon\to 0} \Big\{ \|Y^\varepsilon-Y\|_{\cS^{2p}}+ \|Z^\varepsilon-Z\|_{\cH^{2p}}
\Big\}=0.
\]
We next apply Theorem \ref{theo:path-reg}. After a careful inspection of the arguments of its proof, we find a positive constant $C$ independent of $\varepsilon>0$ such that
for any partition $\pi = \{t_0<\cdots<t_N\}$ of the interval $[0,T]$ with mesh size $|\pi|$ we have
\[
\max_{0\leq i\leq N-1}\Big\{\sup_{ t\in [\ti,\tip)} \E\Big[\,|Y^\varepsilon_t-Y^\varepsilon_\ti|^2\Big]\Big\}+
\sum_{i=0}^{N-1} \E\Big[\int_\ti^\tip |Z^\varepsilon_s-\bar{Z}^{\varepsilon,\pi}_\ti|^2\uds \Big]\leq C |\pi|,
\] with the set $\{ \bar{Z}^{\varepsilon,\pi}_\ti \}_{\ti\in\pi}$ given as in (\ref{Z-bar-ti-pi}) for the process $(Z^\varepsilon_t)_{t\in[0,T]}.$
We finally apply Fatou's lemma to obtain
\[
\sum_{i=0}^{N-1} \E\Big[\int_\ti^\tip |Z_s-\bar{Z}^{\pi}_\ti|^2\uds \Big]
\leq
\liminf_{\varepsilon\to 0}
\sum_{i=0}^{N-1} \E\Big[\int_\ti^\tip |Z^\varepsilon_s-\bar{Z}^{\varepsilon,\pi}_\ti|^2\uds \Big]\leq C |\pi|.
\]
A similar argument holds for the difference $(Y^\varepsilon_t-Y^\varepsilon_\ti)$, $\ti\in\pi$ and $t\in[\ti,\tip)$.

The result now follows.
\end{proof}

\section{Numerics for qgFBSDE - a truncation procedure}

A common method to deal with non-linearities or unbounded functions consists in truncating them. In our BSDE (\ref{fbsde-bsde}), the driver has a
quadratic nonlinearity in $z$. Our truncation of the nonlinear driver will relate BSDE with drivers of quadratic growth BSDE with globally Lipschitz
drivers. For this type of BSDE numerical schemes are readily available (see \cite{04bouchardtouzi}, \cite{phd-elie} and references therein), and the
error committed in the numerical approximation for BSDE with Lipschitz driver is well known. So to fully analyze the error related to successive
approximations in the case of BSDE with drivers of quadratic growth, it only remains to provide an estimate for the error arising from the truncation.
This is what we propose to do in this section.

Emphasizing once more the point made in the introduction, we remark that the convergence
rate for numerical schemes of truncated qgFBSDE is well known and
produces an exponential dependence on the truncation level. Despite this fact,
the result presented in Theorem \ref{theo:trunc-conv-rate} implies that one can obtain a high convergence order for
the truncation procedure. This modestly mitigates the exponential dependence of the convergence order of the scheme on the truncation level.

For our qgFBSDE system (\ref{fbsde-sde}), (\ref{fbsde-bsde}) we assume that {\bf HX1} and {\bf HY1} hold. In this section the diffusion process $X$
appearing in the BSDE's terminal condition and driver plays a secondary role, especially in the calculations we will be presenting.

To truncate the driver of quadratic growth, we use the already familiar sequence $\{h_n\}_{n\in\IN}$ defined in (\ref{trunc-family}). To justify
that this sequence indeed does the job, we will need (\ref{Z-moment-time}) to make our calculations work. At first we have to justify that the truncated FBSDE obtained in this way satisfies {\bf HX1} and {\bf HY1}.

Recalling the driver of BSDE (\ref{fbsde-bsde}), we define a family of functions $f_n:[0,T]\times \R^m \times \R\times\R^d\to \R$, $(\omega,
t, x, y, z) \mapsto f(\omega,t,x,y,h_n(z))$ and with it, the family of truncated FBSDE
\begin{equation}
\label{tr:quad-bsde-truncated} Y^n_t = \xi +\int_t^T
f(s,X_s,Y^n_s,h_n(Z^n_s))   \uds -\int_t^T Z^n_s\udws,
\end{equation}
with $\xi = g(X_T)$. The solution process of (\ref{fbsde-bsde}) is denoted by $(Y,Z)$ and the solution process of its truncated counterpart (\ref{tr:quad-bsde-truncated}) by $(Y^n,Z^n)$.
Furthermore, we recall that in (\ref{trunc-family})
$(h_n)_{n\in\IN}$ was defined as a sequence of $C^1$ functions, and that by Theorem \ref{theo:moment-estimates-special-class} we have
\begin{align}
\nonumber
&\max\Big\{
\sup_{n\in\IN}\| Z^n*W \|_{BMO} ,\ \| Z*W \|_{BMO}\Big\}\\
\label{unif-bmo-norm-trunc}
& \hspace{4cm}
\leq
\frac{4+6M^2T}{3M^2} \exp \Big\{6M\|\xi\|_{L^\infty}+MT\Big\}<\infty.
\end{align}
This means that the martingales of the sequence $(Z^n*W)_{n\in\IN}$ satisfy the reverse H\"older inequality with an exponent $\barr$ independent of $n$ (see subsection \ref{bmo-subsection}, and also Remark \ref{barr.indep.n}).
\begin{remark}\label{trunc-numerics-remark}
If (\ref{fbsde-bsde}) satisfies {\bf HX1} and {\bf HY1}, by inspection of the hypotheses it is easy to see that family (\ref{tr:quad-bsde-truncated}) also satisfies {\bf HX1} and {\bf HY1} uniformly in $n$. 
This means that the results on differentiability in subsection \ref{subsec:qgBSDE} and on continuity and bounds in section \ref{subsec:cont-bounds} are available for the truncated BSDE (\ref{tr:quad-bsde-truncated}) as well.
\end{remark}
The proof of our result on the truncation error relies on Markov's inequality and (\ref{Z-moment-time}). The convergence rate will depend on a parameter which arises from the inverse
H\"older inequality and is related to (\ref{unif-bmo-norm-trunc}).
\begin{theo}\label{theo:trunc-conv-rate}
Assume that {{\bf HX1}} and {{\bf HY1}} are satisfied, and let $(Y,Z)$ and $(Y^n,Z^n)$ be solutions of (\ref{fbsde-bsde}) and (\ref{tr:quad-bsde-truncated}) respectively.
Then for any $p\geq 1$ and $\beta \geq 1$ there exist positive finite constants $C_p$ and $D_{\beta}$ such that for $n\in\IN$
\[
\E\Big[\sup_{t\in[0,T]} |Y^n_t-Y_t|^{2p}\Big]+\E\Big[\Big(\int_0^T
|Z^n_s-Z_s|^2\ud s\Big)^p\Big]\leq C_p\, D_{\beta}\, {n^{-\frac{\beta}{2 \barq}}}.
\]
The constant $\barq$ is the H\"older conjugate of
$\barr\in(1,\infty)$ which is related to the estimate (\ref{unif-bmo-norm-trunc}) according to Remark \ref{def:r-bar}. The constant $D_{\beta}$ is given by
$D_\beta=\big(\sup_{n\in\IN}\|Z^n\|_{\cS^{2\beta \barq}(\IP)} \big)^{\frac{\beta}{2\barq}}.$ The constant $C_p$ is independent of $\beta\geq 1$.
\end{theo}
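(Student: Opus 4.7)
The strategy is to view $(\delta Y, \delta Z) := (Y-Y^n, Z-Z^n)$ as the solution of a linear stochastic-Lipschitz BSDE driven by a ``source term'' that captures the truncation error, and then apply the comparison estimate of Lemma \ref{comparison.theo} from Section \ref{sec:bmo-BSDE}. Concretely, the pair $(\delta Y, \delta Z)$ satisfies, for $t\in[0,T]$,
\[
\delta Y_t = \int_t^T \big[f(s,X_s,Y_s,Z_s) - f(s,X_s,Y^n_s,h_n(Z^n_s))\big]\,\uds - \int_t^T \delta Z_s\, \udws ,
\]
with zero terminal condition. By adding and subtracting $f(s,X_s,Y^n_s,Z^n_s)$ we split the driver into a genuinely linear part in $(\delta Y,\delta Z)$ and a source term
\[
\delta f(s) := f(s,X_s,Y^n_s,Z^n_s) - f(s,X_s,Y^n_s,h_n(Z^n_s)).
\]
By {\bf HY1}, the coefficient of $\delta Z$ in the linear part is absolutely dominated by $M(1+|Z_s|+|Z^n_s|)$, whose stochastic integral against $W$ is a BMO martingale \emph{uniformly in $n$} thanks to the uniform bound \eqref{unif-bmo-norm-trunc}. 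The corresponding conjugate exponent $\bar q$ provided by Remark \ref{def:r-bar} is therefore also independent of $n$.

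Next I would control the source term. Since $h_n(z)=z$ on $\{|z|\leq n\}$, the difference $Z^n_s-h_n(Z^n_s)$ vanishes on $\{|Z^n_s|\leq n\}$, and the Lipschitz bound on $f$ in $z$ yields
\[
|\delta f(s)| \leq C\bigl(1+|Z^n_s|^2\bigr)\mathbf{1}_{\{|Z^n_s|>n\}}\leq C\bigl(1+|Z^n_s|^2\bigr)\mathbf{1}_{\{\sup_{r\in[0,T]} |Z^n_r|>n\}}.
\]
Invoking Lemma \ref{comparison.theo} with this $\delta f$ (since $\delta Y,\delta Z$ have zero terminal value and the other driver is trivially zero after the linearization), one obtains
\[
\|\delta Y\|_{\cS^{2p}}^{2p}+\|\delta Z\|_{\cH^{2p}}^{2p}
\leq C\,\E\!\left[\left(\int_0^T|\delta f(s)|\,\uds\right)^{2p\bar q^2}\right]^{\!1/\bar q^2}.
\]

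The remaining step is to turn the indicator into polynomial decay in $n$. Factoring the set $\{\sup_r |Z^n_r|>n\}$ out of the integral, applying H\"older's inequality with conjugate exponents $(\alpha,\alpha')=(4/3,4)$, and using Markov's inequality with exponent $2\beta\bar q$ gives
\[
\P\Bigl(\sup_{r}|Z^n_r|>n\Bigr)^{1/\alpha'}\leq \frac{\|Z^n\|_{\cS^{2\beta\bar q}}^{\beta\bar q/2}}{n^{\beta\bar q/2}}.
\]
Raising to the $1/\bar q^2$ power turns this into the desired factor $n^{-\beta/(2\bar q)}$ multiplied by $\|Z^n\|_{\cS^{2\beta\bar q}}^{\beta/(2\bar q)}$, which is exactly $D_\beta$ in view of the uniform-in-$n$ finiteness of $\sup_n\|Z^n\|_{\cS^{2\beta\bar q}}$. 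The complementary factor $\E[(\int_0^T(1+|Z^n_s|^2)\uds)^{2p\bar q^2 \alpha}]^{1/(\alpha \bar q^2)}$ is absorbed into the constant $C_p$ using Theorem \ref{Z-sinfty-bounds} together with Remark \ref{trunc-numerics-remark} to guarantee $\sup_n\|Z^n\|_{\cS^{q}}<\infty$ for every $q\geq 1$.

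The main obstacle is the \emph{uniformity in $n$} of all the constants: one must ensure that (i) the BMO norm of the stochastic Lipschitz constant of the linearized driver, (ii) the resulting reverse H\"older exponent $\bar q$, and (iii) the $\cS^{p}$-norms of $Z^n$ for arbitrarily high $p$, are all bounded independently of $n$. Point (i) follows from \eqref{unif-bmo-norm-trunc}; point (ii) then follows from part (2) of Lemma \ref{bmoeigen}; and (iii) from Remark \ref{trunc-numerics-remark} combined with Theorem \ref{Z-sinfty-bounds}, which is valid because the truncated family \eqref{tr:quad-bsde-truncated} inherits {\bf HX1}, {\bf HY1} with constants independent of $n$. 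Beyond this uniformity, the proof reduces to a careful choice of H\"older exponents to match the precise exponent $\beta/(2\bar q)$ in the statement.
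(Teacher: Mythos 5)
Your plan is correct and follows essentially the same route as the paper's own proof: linearize the difference of the two BSDEs so that the coefficient of $\delta Z$ is dominated by $M(1+|Z_s|+|Z^n_s|)$, whose BMO norm --- and hence the exponent $\barq$ --- is uniform in $n$ by (\ref{unif-bmo-norm-trunc}), apply the Section \ref{sec:bmo-BSDE} comparison machinery to reduce everything to the source term $|f(s,X_s,Y^n_s,Z^n_s)-f(s,X_s,Y^n_s,h_n(Z^n_s))|$, which vanishes off $\{|Z^n_s|>n\}$, and then extract the rate $n^{-\frac{\beta}{2\barq}}$ by H\"older and Markov together with the uniform bound $\sup_{n}\|Z^n\|_{\cS^{2\beta\barq}}<\infty$ coming from Remark \ref{trunc-numerics-remark} and the regularity results of Section \ref{sec:reg-in-time}. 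The only (immaterial) packaging differences are that you apply Markov's inequality to $\sup_{t}|Z^n_t|$ directly under $\IP$ rather than pointwise in time under the auxiliary measure $\IQ^{b_n}$ as the paper does, and that you cite Lemma \ref{comparison.theo} as a black box whereas the paper repeats its linearization argument (since the quadratic driver does not literally satisfy (HA2), only the linearized coefficient along the two solutions is a BMO integrand, a point you correctly identify).
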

\begin{proof} Let $n\in\IN$ and $t\in[0,T]$. As usual we have to rely on an a priori estimate for the difference of original and truncated BSDE. To this end we use the notation,
methods and arguments of the proofs of Lemmas \ref{apriori.estimate}
and \ref{comparison.theo} (see Section \ref{sec:bmo-BSDE}), without
repeating all the details. For
\[
b^n_t=\frac{f_n(t,X_t,Y^n_t,Z^n_t)-f(t,X_t, Y^n_t,
Z_t)}{|Z^n_t-Z_t|^2}(Z^n_t-Z_t)\1_{\{Z^n_t\neq Z_t\}},\quad
t\in[0,T], n\in\IN,\]
the associated equivalent measure obtained
after measure change by subtracting the drift related to $b^n$ will
be denoted by $\IQ^{b_n}$. The superscript will be omitted for convenience. Using H\"older's inequalty and for some positive real constant $C$ we obtain, 
\begin{align}
\nonumber
&\E^\IP\Big[\sup_{t\in[0,T]} |Y_t-Y^n_t|^{2p}+\Big(\int_0^T
|Z_s-Z^n_s|^2\ud s\Big)^{p}\Big]\\
\label{aux:first-ineq}
&\qquad\qquad \leq D~ \E^\IQ\Big[\sup_{t\in[0,T]} |Y_t-Y^n_t|^{2p\barq}+\Big(\int_0^T
|Z_s-Z^n_s|^2\ud s\Big)^{p\barq}\Big]^{\frac{1}{\barq}},
\end{align}
with $D= \sup_{n\in\IN} \E^\IP[ \cE(b_n*W)^\barr ]^{\frac{1}{\barr}}<\infty$.
That $D$ is finite follows from (\ref{unif-bmo-norm-trunc}) combined with part 2) of Lemma \ref{bmoeigen}.

We continue with the estimation of (\ref{aux:first-ineq}). Following the proof of Lemma \ref{apriori.estimate} or
Lemma \ref{comparison.theo} (see (\ref{temp-Q-bounds})), there exists a positive constant $C$ such that
\begin{align}
\nonumber
&\E^\IQ\Big[\sup_{0\leq t\leq T}|Y_t-Y_t^n|^{2p\barq}+\Big( \int_0^T|Z_s-Z_s^n|^{2}\uds \Big)^{p\barq}\Big]^{\frac{1}{\barq}}\\
\nonumber
&\qquad \leq C\,
\E^\IQ\Big[ \Big( \int_0^T \big|f\big(s,Y^n_s,Z^n_s\big)-f\big(s,Y^n_s,h_n(Z^n_s)\big)\big| \uds \Big)^{2p\barq}\Big]^{\frac{1}{\barq}}\\
\nonumber
&\qquad \leq C\,
\E^\IQ\Big[ \Big( \int_0^T M\big(1+|Z_s^n|+|h_n(Z^n_s)|\big) \big|Z^n_s-h_n(Z^n_s)\big| \uds \Big)^{2p\barq}\Big]^{\frac{1}{\barq}}\\
\nonumber
&\qquad \leq C\,
\E^\IQ\Big[\Big( \int_0^T \big|M(1+|Z_s^n|+|h_n(Z^n_s)|)\big|^2 \uds\Big)^{2p\barq}\Big]^{\frac{1}{2\barq}} \E^\IQ\Big[ \Big(\int_0^T |Z^n_s-h_n(Z^n_s)|^2
\uds \Big)^{2p\barq}\Big]^{\frac{1}{2\barq}}\\
\label{aux-pre-markov-ineq}
&\qquad \leq C\,
\E^\IQ\Big[ \Big( \int_0^T |Z^n_s-h_n(Z^n_s)|^2 \uds \Big)^{2p\barq} \Big]^{\frac{1}{2\barq}},
\end{align}
where we made use of the growth assumption on $f$ stated in {\bf HY1}, H\"older's inequality and (\ref{temp-Q-bounds}).

A closer look at the properties of $h_n$ reveals that for any $n\in\IN$ and $s\in[0,T]$ we have
\[|Z_s^n-h_n(Z^n_s)|^2\leq 4 |Z^n_s|^2\1_{\{|Z^n_s|> n\}}.\]
In view of this inequality, an explicit convergence rate can be obtained if the term $\1_{\{|Z^n_s|> n\}}$ is explored. Because (\ref{Z-moment-time}) holds we can use Markov's inequality for this purpose.

As pointed out in Remark \ref{trunc-numerics-remark}, the validity of {\bf HX1} and {\bf HY1} for the family of drivers used entitles us to employ the crucial (\ref{Z-moment-time}) of Theorem \ref{Z-sinfty-bounds}.
We now develop (\ref{aux-pre-markov-ineq}) using sequentially H\"older's inequality, Jensen's inequality and  Fubini's theorem, to obtain
\begin{align*}
\nonumber
&C\,\E^\IQ\Big[\Big(\int_0^T |Z^n_s-h_n(Z^n_s)|^2\ud s\Big)^{2p\barq}\Big]^{\frac{1}{2\barq}}\\
\nonumber
&\quad \leq C\, \E^\IQ\Big[\Big( \int_0^T |Z^n_s|^4\uds \Big)^{2p\barq}\Big]^\frac{1}{4\barq}
\E^\IQ\Big[\Big(\int_0^T \1_{\{|Z^n_s|> n\}} \uds \Big)^{2p\barq}\Big]^\frac{1}{4\barq}
\leq C\, \E^\IQ\Big[\int_0^T \1_{\{|Z^n_s|>
n\}} \ud s\Big]^\frac{1}{4\barq}\\
&\quad \leq C\, \Big(\int_0^T \E^\IQ[\1_{\{|Z^n_s|> n\}}]\uds\Big)^\frac{1}{4\barq}
= C\,\Big(\int_0^T \IQ\big[\{|Z^n_s|> n\}\big]\uds \Big)^\frac{1}{4\barq}.
\end{align*}
Applying Markov's inequality we obtain for some $\beta\geq 1$
\begin{align*}
C \Big( \int_0^T\frac{1}{n^{2\beta }}\E^\IQ[\,|Z_s^n|^{ 2\beta }]\uds \Big)^\frac{1}{4\barq}
= C\, n^{-\frac{\beta}{2\barq} }\, \E^\IQ\Big[\int_0^T |Z_s^n|^{2\beta}
\uds\Big]^\frac{1}{4\barq}
\leq C\,D\, \IE^\IP\Big[\sup_{t\in[0,T]} |Z_t|^{2\beta \barq} \Big]^{\frac{1}{4\barq^2}}\,n^{-\frac{\beta}{2\barq}},
\end{align*}
with $D$ as in inequality (\ref{aux:first-ineq}). We emphasize that the constant $C$ which varies from line to line depends on $p$, $T$, $\barr$ and $\barq$, but \emph{not} on $n$ or $\beta$.

By construction, it is clear that
\begin{align*}
\E^\IP\Big[\sup_{t\in[0,T]} |Y_t-Y^n_t|^{2p}+\Big(\int_0^T
|Z_s-Z^n_s|^2\ud s\Big)^{p}\Big]
\leq
C\, \Big(\sup_{n\in\IN}\|Z^n\|_{\cS^{2\beta \barq}(\IP)} \Big)^{\frac{\beta}{2\barq}} \, n^{-\frac{\beta}{2\barq}},
\end{align*}
with a positive constant $C$ independent of $\beta$ and $n$.

We finish this proof with an argument establishing the finiteness of $\sup_{n\in\IN}\|Z^n\|_{\cS^{\gamma}}$ for $\gamma>2$.
Having in mind Remark \ref{trunc-numerics-remark} we can apply, for every $n\in\IN$, Theorem \ref{theo:1st-order-diff-qgbsde} to BSDE (\ref{tr:quad-bsde-truncated}).
We obtain that for each $n$ that the pair $(Y^n,Z^n)$ is differentiable with derivatives given by $(\nabla Y^n, \nabla Z^n).$
The derivatives satisfy BSDE (\ref{diff-bsde}) with driver $f$ replaced by the corresponding driver $f_n$ (see BSDE (\ref{tr:quad-bsde-truncated})).

Given the properties of the sequence $(h_n)_{n\in\IN}$ and inequality  (\ref{unif-bmo-norm-trunc}), we can apply Lemma \ref{apriori.estimate} to the BSDE for $(\nabla Y^n, \nabla Z^n)$
and easily obtain that for any $\gamma \geq 2$,
$\sup_{n\in\IN} \|\nabla Y^n \|_{\cS^\gamma}<\infty$.

With arguments similar to those used to prove  (\ref{Z-moment-time}), it follows that for any $\gamma\geq 2$ we have $\sup_{n\in\IN}\|Z^n\|_{\cS^{\gamma}}<\infty$.
\end{proof}
\appendix
\section{Appendix}
In this appendix we give the technical details left out in section
\ref{section:3} in the proof of second order Malliavin
differentiability of the solution processes of a BSDE the driver of
which satisfies Lipschitz conditions.

The techniques we will use are not new. They are based on a Picard
iteration argument. It does not only give existence and uniqueness
of solutions. It also allows to establish Malliavin
differentiability in each step for the respective approximation of
the solution. By means of a contraction argument in a suitable
Sobolev norm, Malliavin smoothness is carried over to the solution
in the limit. In contrast to previous applications, here the scheme
deals with an equation that already has a Malliavin derivative as
its solution.

We start with canonical coefficients that are given by an
$\cF_T$-measurable random variable $\xi$ and a measurable function
$f:\Omega\times[0,T]\times[0,T]\times\IR\times \IR^d\to \IR$, such
that
\[f(\cdot, s, u, y, z)=a_{u,s}(\cdot)+b_s(\cdot)\, y + \langle c_s(\cdot), z\rangle.\]
For the remainder we omit the dependence of the coefficients on $\omega\in\Omega$. The coefficient functions defining this driver will be supposed to
satisfy the following assumptions.
\bit
\item[(A1)]
$b:\Omega\times[0,T]\to \IR$ and $c:\Omega\times[0,T]\to \IR^d$ are measurable $(\cF_t)$-adapted processes bounded by a constant
$M>0$.

$a:\Omega\times[0,T]\times[0,T]\to \IR$ satisfies $\sup_{0\leq u\leq
T\,}\lVert a_{u,\cdot} \lVert_{\cS^{2p}}<\infty$ for all $p\geq 1$.
For each fixed $u\in[0,T]$ the process $a_{u,t}$ is progressively measurable.

\item[(A2)]
$\xi$ is a Malliavin differentiable, $\cF_T$-measurable bounded random variable with Malliavin derivative
given by $D \xi$ satisfying
\[\sup_{0\leq u\leq T} \lVert D_u \xi \lVert_{L^{2p}} <\infty,\ \textrm{for all } p\geq
1.\]
\item[(A3)]
$a_{u,t}, b_t, c_t$ are Malliavin differentiable for all $u\in[0,T]$ and $t\in[0,T]$. Measurable versions of their Malliavin derivatives
are respectively given by $D_v a_{u,t}$, $D_v b_{t}$ and $D_v c_{t}$
for $v \in[0,T]$ such that for all $p\geq 1$
\[
\sup_{0\leq v,u\leq T} \E\Big[\Big(\int_0^T \big[\,|D_v a_{u,s}|^2+
|D_v b_s|^2 + |D_v c_s|^2\big] \uds\Big)^p\Big]<\infty.
\]
\item[(A4)]
For all $u\in[0,T]$, $D_u \xi$ is Malliavin differentiable, with second order derivative given by $D_vD_u\xi, v,u\in[0,T],$ and
satisfying $\sup_{0\leq u,v\leq T}\|D_v D_u \xi\|_{L^{p}}<\infty$
for all $p\geq 1$.
\eit

Under these assumptions we consider the following backward
stochastic differential equation
\begin{align}\label{appendix:1order-mall-dif-bsde}
U_{u,t} &= 0, \qquad V_{u,t} = 0,\qquad t\in [0, u),\nonumber\\
U_{u,t} &= D_u \xi
 - \int_t^T  V_{u,s} \ud W_s + \int_t^T f(s, u, U_{u,s}, V_{u,s} )\ud s, \qquad t\in
[u,T].
\end{align}

\begin{theo}\label{appendixB:2-order-mall-diff-lip-BSDE}
Under (A1) and (A2), the BSDE (\ref{appendix:1order-mall-dif-bsde})
has a unique solution $(U,V)$ in $\cS^{2p} \times \cH^{2p}$ for $p\geq 1$. Furthermore, if (A3) and (A4) hold, then $(U,V)$ is Malliavin differentiable and a version of
$\{(D_v U_{u,t},D_v V_{u,t});0\leq v\leq u\leq t\leq T\}$ satisfies
for any $0\leq v\leq u\leq t\leq T$
\begin{align}\label{appendix:2order-mall-dif-bsde}
D_vU_{u,t} &= D_vD_u \xi  -\int_t^T D_vV_{u,s}\ud W_s \nonumber\\
&\qquad +\int_t^T \Big[(D_v f)(s,u, U_{u,s}, U_{u,s})+ \big\langle
\nabla f(s,u,U_{u,s}, U_{u,s}),
(D_vU_{u,s},D_vU_{u,s})\big\rangle\Big]\ud s.
\end{align}
A version of $\{V_{u,t};0\leq u\leq t\leq T\}$ is given by $\{D_t U_{u,t};0\leq u\leq t\leq T\}$.
\end{theo}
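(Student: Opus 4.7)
The plan is to follow the Picard iteration strategy outlined in the appendix, treating the first part (existence/uniqueness with moment bounds) as essentially routine and focusing the work on the Malliavin differentiability part. For Step 1, since the driver is affine in $(y,z)$ with bounded coefficients $b,c$, it is Lipschitz with deterministic Lipschitz constant $M$, so standard BSDE theory yields existence and uniqueness of $(U_{u,\cdot},V_{u,\cdot})\in\cS^2\times\cH^2$ for each fixed $u$. Applying Lemma \ref{apriori.estimate} with $H\equiv M\sqrt{d}$ and the inhomogeneity $a_{u,\cdot}$ (which lies in $\cS^{2p}$ uniformly in $u$ by (A1)) upgrades this to $\cS^{2p}\times\cH^{2p}$ bounds, uniformly in $u\in[0,T]$.

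For Malliavin differentiability I would set $(U^0_u,V^0_u)=(0,0)$ and iterate
\[
U^{n+1}_{u,t}=D_u\xi+\int_t^T\big[a_{u,s}+b_sU^n_{u,s}+\langle c_s,V^n_{u,s}\rangle\big]\uds-\int_t^T V^{n+1}_{u,s}\udws,
\]
where $V^{n+1}$ arises from the martingale representation theorem. By induction on $n$, using the closedness of the Malliavin derivative operator together with its good behaviour under conditional expectations and It\^o integrals, each pair $(U^n_{u,t},V^n_{u,t})$ lies in $\ID^{1,p}$ for every $p$, and the derivative $(D_vU^n_u,D_vV^n_u)$ satisfies the linear BSDE obtained by formally differentiating the Picard scheme. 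Applying Lemma \ref{apriori.estimate} to this derivative BSDE, using (A3)--(A4) and the uniform moment bounds from Step 1 on $(U^n,V^n)$, gives a uniform a priori estimate
\[
\sup_{n\in\IN}\sup_{0\le v,u\le T}\big\{\|D_v U^n_u\|_{\cS^{2p}}+\|D_v V^n_u\|_{\cH^{2p}}\big\}<\infty.
\]

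Since the Picard iteration is a contraction in $\cS^2\times\cH^2$, $(U^n,V^n)$ converges to the unique solution $(U,V)$; combining this $L^2$ convergence with the uniform Malliavin bound, Lemma \ref{lemma.from.nualart.extended} yields Malliavin differentiability of $(U_{u,t},V_{u,t})$ together with weak convergence $DU^n\rightharpoonup DU$, $DV^n\rightharpoonup DV$. Passing to the weak limit in the linear BSDE satisfied by $(DU^n_u,DV^n_u)$ identifies the limit as the unique solution of (\ref{appendix:2order-mall-dif-bsde}); uniqueness here again follows from Lemma \ref{apriori.estimate}. The representation $V_{u,t}=D_tU_{u,t}$ is obtained by the now-standard Pardoux--Peng argument: the adapted process $t\mapsto D_tU_{u,t}$ is a version of the integrand in the martingale representation of $U_{u,\cdot}$, which by uniqueness must coincide with $V_{u,\cdot}$ $\ud t\otimes\ud\IP$-a.e.

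The main obstacle is the inductive Malliavin step: one must carefully justify that Malliavin differentiability is preserved through the martingale representation at each stage and that the formal derivative BSDE is actually correct, in particular keeping track of the extra parameter $u$ alongside the time variable. Once this bookkeeping is done and the uniform Sobolev bound in Step 3 is established, the passage to the limit via Lemma \ref{lemma.from.nualart.extended} is standard, and the remaining identification $V_{u,t}=D_tU_{u,t}$ reduces to a direct application of martingale representation.
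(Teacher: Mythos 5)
Your overall strategy (Picard iteration, inductive Malliavin differentiability of the iterates, closedness of the derivative operator to pass to the limit, and the martingale-representation argument with $v=t$ for $V_{u,t}=D_tU_{u,t}$) is the same as the paper's, and your use of Lemma \ref{lemma.from.nualart.extended} with weak convergence of the derivatives is a legitimate variant of the paper's argument, which instead proves \emph{strong} convergence of $(D_vU^k_u,D_vV^k_u)$ to the solution of (\ref{appendix:2order-mall-dif-bsde}). However, there is one genuine gap in your Step 3. The equation satisfied by $(D_vU^{n+1}_u,D_vV^{n+1}_u)$ is \emph{not} a BSDE in its own unknowns: its driver involves $(D_vU^{n}_u,D_vV^{n}_u)$, the derivatives of the \emph{previous} iterate, while only the stochastic integral involves $D_vV^{n+1}_u$. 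Hence Lemma \ref{apriori.estimate} cannot be applied to it directly to produce a bound that is uniform in $n$; what a priori estimation gives is a recursion of the form $A^{n+1}\leq c+ C(T+T^2)\,A^{n}$ with $A^n=\sup_{v,u}\{\|D_vU^n_u\|_{\cS^2}^2+\|D_vV^n_u\|_{\cH^2}^2\}$, and if $C(T+T^2)\geq 1$ this recursion does not yield $\sup_n A^n<\infty$. Closing this is exactly where the paper spends its effort: it splits $[0,T]$ into subintervals of length $\delta$ with $C\delta<1$ and runs a backward induction over the subintervals (alternatively one can use $e^{\beta t}$-weighted norms under which the derivative recursion is a strict contraction). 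Without such an argument your uniform Sobolev bound, and hence the hypothesis of Lemma \ref{lemma.from.nualart.extended}, is not established.

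Two smaller points. First, once you only have weak convergence $DU^n\rightharpoonup DU$, $DV^n\rightharpoonup DV$ in $H\otimes\cH^2$, identifying the limit as a solution of (\ref{appendix:2order-mall-dif-bsde}) for a.e.\ fixed $(v,u)$ requires an extra (if routine) argument: the linear terms $b_sD_vU^n_{u,s}$, $c_sD_vV^n_{u,s}$ pass to the limit because multiplication by bounded adapted processes, conditional expectation and stochastic integration are bounded linear operators, and $D_vb_sU^n_{u,s}$, $D_vc_sV^n_{u,s}$ converge strongly by Step 1; you should say this, since the equation is first obtained only in an integrated sense in $(v,u)$. The paper's subinterval contraction avoids this entirely by giving strong convergence of the derivative sequence to the solution of the limit equation, uniformly in $(u,v)$. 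Second, in Step 2 the Malliavin differentiability of $V^{n+1}_{u,t}$ obtained from the martingale representation is precisely Lemma 5.1 of \cite{97KPQ}; citing it (rather than ``good behaviour under It\^o integrals'') is what makes the induction rigorous, and the same lemma is what delivers the final representation $V_{u,t}=D_tU_{u,t}$ when the BSDE is written with terminal time $t$.
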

\begin{proof}
For the sake of notational simplicity and clarity, we provide a
proof for the case $d=1$. This proof splits into two steps. In the
first one, we are concerned with existence and uniqueness of
solutions for (\ref{appendix:1order-mall-dif-bsde}) using a Picard
iteration. In the second step we prove Malliavin differentiability.
To this end, we show that the sequence arising in the Picard scheme
is in fact Malliavin differentiable, and by contraction that its
limit must be the Malliavin derivative of the solution process
constructed in the first part.

\emph{Part i):} To simplify notation we refer to $D_u \xi$ as
$\xi_u$ if there is no ambiguity.

We have a ``standard'' BSDE with Lipschitz continuous driver and a
smooth terminal condition.  The usual arguments for existence and
uniqueness in this setting are well known. We recall the Picard
iteration argument of the proof of {Proposition 5.3} in
\cite{97KPQ}. Let $(U_{u,t}^0, V_{u,t}^0)=(0,0)$ and for $k\ge 0$
define recursively the pair $(U_{u,t}^{k+1}, V_{u,t}^{k+1})$ as the
solution of
\begin{align*}
U_{u,t}^{k+1}&= D_u \xi  -\int_t^T V_{u,s}^{k+1}\ud W_s+\int_t^T f(s,u,U_{u,s}^{k},V_{u,s}^{k}) \ud s,\quad\textrm{for } 0\leq u\leq t\leq T.
\end{align*}
Under (A1) and (A2) the iteration scheme is well defined and the
following moment estimates hold for all $k\in\IN$ and $p\geq 2$ (see
{Proposition 2.1} in \cite{97KPQ}):
\begin{align*}
\sup_{0\leq u\leq T}\Big\{  \| U_u^k \|_{\cS^p}^p+\| V_u^k\|_{\cH^p}^p\Big\} \leq C \sup_{0\leq u\leq T}\Big\{ \| D_u \xi\|_{L^p}^p
+\| a_{u,\cdot} \|_{\cH^p}^p\Big\}<\infty.
\end{align*}
Along the classical lines of the argument\footnote{Applying the It\^o
formula to $e^{\beta t}(U^{k+1}_{u,t}-U^k_{u,t})^2, t\in[0,T],$ one
proves in the usual fashion norm contraction of the sequence through
the a priori estimates. As this argument is well known we omit it.}
of {Corollary 2.1} in \cite{97KPQ} we obtain: $(U^k,V^k)\to (U,V)$
$\ud \IP \otimes \ud t \otimes \ud u$ a.e. as well as
\begin{align*}
\lim_{k\to \infty}\ \sup_{0\leq u\leq T}\Big\{  \| U^{k+1}_u- U_u \|_{\cS^{2p}}+\| V^{k+1}_u-V_u\|_{\cH^{2p}}\Big\} = 0,\quad \textrm{for all }p\geq 1.
\end{align*}

\emph{Part ii):} In what follows we prove that the sequence $(U^k,
V^k)_{k\in\IN}$ is Malliavin differentiable and its Malliavin
derivatives converge to a version of  the Malliavin derivative of
$(U,V)$, the arguments we use are close to the ones in \cite{{phd-elie}}. The proof is done recursively, starting with the initial step.
From (A3) and (A4), the differentiability of $\xi_u$ and $a_{u,t}$
implies that for all $0\leq u\leq t\leq T$ the process $\E\big[
\xi_u +\int_t^T f(s,u,0,0)\ud s|\cF_t\big]\in \ID^{1,2}$ and hence
\begin{align*}
\E\big[ \xi_u +\int_t^T f(s,u,0,0)\ud s|\cF_t\big]&=U^1_{u,t} \in \ID^{1,2}.
\end{align*}
Since $\xi+\int_t^T f(s,u,0,0)\ud s-U^1_{u,t}=\int_t^T
V^1_{u,s}\udws$, Lemma 5.1 of \cite{97KPQ} implies
$V^1_{u,t}\in\ID^{1,2}$. For the recursive step, we next show that
if $(U^k_{u,t},V^{k}_{u,t})\in\ID^{1,2}$, then also
$(U^{k+1}_{u,t},V^{k+1}_{u,t})\in\ID^{1,2}$. Assume that
$(U^k_{u,t},V^{k}_{u,t})\in\ID^{1,2}$. Since $b,c\in\ID^{1,2}$, by
the rules of Malliavin calculus we have $\E\big[ \xi_u+\int_t^T
f(s,u, U^k_{u,s},V^k_{u,s})\ud s |\cF_t\big]\in\ID^{1,2}$ and hence
$U^{k+1}_{u,t}\in\ID^{1,2}$. Consequently for $\int_t^T
V^{k+1}_{u,s}\udws=\xi_u+\int_t^T f(s,u, U^k_{u,s},V^k_{u,s})\ud s
-U^{k+1}_{u,t}$ again Lemma 5.1 in \cite{97KPQ} yields
$V^{k+1}_{u,t}\in\ID^{1,2}$. Given these properties we have for
$0\leq v\leq u\leq t\leq T$
\begin{align*}
D_v U^{k+1}_{u,t} &= D_v \xi_u -\int_t^T D_v V^{k+1}_{u,s}\udws\\
&\quad+ \int_t^T\Big[ (D_v f)(s,u,U^k_{u,s},V^k_{u,s})+\big\langle
(\nabla f)(s,u,U^k_{u,s}, V^k_{u,s}), (D_v U^{k}_{u,s}, D_v
V^{k}_{u,s}) \big\rangle\Big]\ud s.
\end{align*}
We continue by showing that the sequence $(D_v U^{k}_{u,t},D_v
V^{k}_{u,t})$ converges and identify its limit as $(D_v U_{u,t},D_v
V_{u,t})$ which in addition is a solution of
(\ref{appendix:2order-mall-dif-bsde}).

If we assume that equation (\ref{appendix:2order-mall-dif-bsde}) has
a solution $(D_v U_u,D_v V_u)$ then the usual moment estimation
techniques combined with the current assumptions produce
\begin{equation}\label{aux:ddyddy-moment}
\sup_{0\leq v,u\leq T} \Big\{\|D_v U_u\|_{\cS^{2p}}+\|D_v
V_u\|_{\cH^{2p}}\Big\}< \infty, \qquad p\geq 1.\end{equation} Fix
$N\in\IN$ to be chosen later, fix $0\le v\le u\le T$, set
$\delta=T/N$ and define a partition $\tau_i=i\delta$ for
$i\in\{1,\ldots,N\}$. Then a priori estimates yield for $0\le i\le
N-1$
\begin{align}\nonumber
A^{k+1}_{u,v,i}&=\|D_v U^{k+1}_u-D_v U_u\|_{\cS^2([\tau_i, \tau_{i+1}])}^2+\| D_v V_u^{k+1}-D_v V_u \|_{\cH^2([\tau_i, \tau_{i+1}])}^2\\
\label{aux:iteration-ineq} &\qquad\leq C \left\{ \E\Big[ |D_v
U^{k+1}_{u,\tau_{i+1}}-D_v  U_{u,\tau_{i+1}}|^2\Big]+
B^{k}_{u,v,i}+C^{k}_{u,v,i} \right\}
\end{align}
with
\begin{align*}
B^k_{u,v,i}&= \big\|\,|D_v b|\,|U^{k}_{u}-U_u|+|D_v c|\,|V^{k}_{u}-V_u|\,\big\|_{\cH^2([\tau_i,\tau_{i+1}])}^2,\\
C^k_{u,v,i}&=
\E\Big[\Big(\int_{\tau_i}^{\tau_{i+1}}\Big[\,|b_s|\,|D_v
U^{k}_{u,s}-D_v U_{u,s}| +|c_s|\,|D_vV^{k}_{u,s}-D_v
V_{u,s}|\,\Big]\ud s\Big)^2\Big].
\end{align*}
Since both $b$ and $c$ are bounded, Jensen's inequality yields
\[
C^k_{u,v,i}\leq C \delta A^k_{u,v,i}
\]
and hence, an induction argument combined with
(\ref{aux:ddyddy-moment}), (\ref{aux:iteration-ineq}) and the
assumptions provides
\begin{align}\label{aux-A}\sup_{0\leq u,v\leq T}A^k_{u,v,i}<\infty,\quad \textrm{for all }k\geq 0.
\end{align}
To estimate $B^k_{u,v,i}$, note that according to (A3), $\sup_{0\leq
v\leq T}\big\{\|D_v b\|_{\cH^2}+\|D_v c\|_{\cH^2}\big\}<\infty$ and that
according to the first part of the proof $(U^{k}-U,V^{k}-V)\to 0$ in
$\cS^{2p}\times \cH^{2p}$, $p\geq 1$. Now choose $N$ large enough to guarantee
$\alpha = C \delta < 1$. Therefore for any $\eta>0$ one finds a
$K^*\geq 0$, independent of $u,v$ for which
\[
A^{k+1}_{u,v,i}\leq C \E\Big[ |D_v U^{k+1}_{u,\tau_{i+1}}-D_v U_{u,\tau_{i+1}}|^2\Big]+\eta+\alpha A^{k}_{u,v,i}, \qquad \textrm{for }k\geq K^*.
\]
The equation $D_v U_{u,T}=D_v U^k_{u,T}$ allows us to write for
$i=N-1$ and $k\geq K^*$
\[
\sup_{0\leq u,v\leq T} A^k_{u,v,N-1} \leq \eta+\alpha^{k-K^*}\sup_{0\leq u, v\leq T}A^{K^*}_{u,v,N-1}.
\]
As a consequence, (\ref{aux-A}) implies that $\sup_{0\leq u,v\leq
T}A^k_{u,v,N-1}\to 0$ as $k\to \infty$. One can expand the argument
and show recursively that for all $0\leq i\leq N-1$ one has
$\sup_{0\leq u,v\leq T}A^k_{u,v,i}\to 0$. Summing over $i$, one
arrives at
\[
\sup_{0\leq u,v\leq T} \left\{\|D_v U^{k+1}_u-D_v U_u\|_{\cS^2([0,T])}^2+\| D_v V_u^{k+1}-D_v V_u \|_{\cH^2([0,T])}^2\right\}
\stackrel{k\to\infty}{\longrightarrow}
0.
\]
The conclusion is that $(U_{u}, V_{u})$ are indeed Malliavin
differentiable and a version of its Malliavin derivatives is given
by the limit of $(D_v U^k_{u}, D_v V^k_{u})$.

The last statement of our theorem follows from Lemma 5.1 in
\cite{97KPQ}. We write our BSDE
(\ref{appendix:1order-mall-dif-bsde}) for terminal time $t$, apply
the Malliavin derivative operator, and obtain by the quoted Lemma
\begin{align*}
D_v U_{u,t} &= V_{u,t} -\int_v^t D_vV_{u,s}\ud W_s\\
&\quad+\int_v^t \Big[(D_v f)(s,u, U_{u,s}, U_{u,s}) + \big\langle
\nabla f(s,u,U_{u,s}, U_{u,s}),
(D_vU_{u,s},D_vU_{u,s})\big\rangle\Big]\ud s.
\end{align*}
Choosing $v=t$ leads to the desired representation.
\end{proof}

{\bf Acknowledgments:} The authors wish to express their thanks to two referees for their careful reading of the manuscript and several very helpful suggestions.


\begin{thebibliography}{99}

\bibitem[AIdR07]{AIDR07}
S.~Ankirchner, P.~Imkeller, and G.~dos Reis.
\newblock Classical and variational differentiability of {BSDE}s with quadratic
  growth.
\newblock {\em Electron. J. Probab.}, 12:no. 53, 1418--1453 (electronic), 2007.

\bibitem[BC08]{07BriCon}
P.~Briand and F.~Confortola.
\newblock B{SDE}s with stochastic {L}ipschitz condition and quadratic {PDE}s in
  {H}ilbert spaces.
\newblock {\em Stochastic Process. Appl.}, 118(5):818--838, 2008.

\bibitem[BT04]{04bouchardtouzi}
B.~Bouchard and N.~Touzi.
\newblock Discrete-time approximation and {M}onte-{C}arlo simulation of
  backward stochastic differential equations.
\newblock {\em Stochastic Process. Appl.}, 111(2):175--206, 2004.

\bibitem[dR10]{phd-dosReis}
G.~dos Reis.
\newblock {\em On some properties of solutions of quadratic growth {BSDE} and
  applications in finance and insurance}.
\newblock PhD thesis, Humboldt University, 2010.

\bibitem[EKPQ97]{97KPQ}
N.~El~Karoui, S.~Peng, and M.~C. Quenez.
\newblock Backward stochastic differential equations in finance.
\newblock {\em Math. Finance}, 7(1):1--71, 1997.

\bibitem[Eli06]{phd-elie}
R.~Elie.
\newblock {\em Contr{\^o}le stochastique et m{\'e}thodes num{\'e}riques en
  finance mathematique}.
\newblock PhD thesis, Universit{\'e} Paris-Dauphine, D{\'e}cembre 2006.

\bibitem[GLW05]{GLW05}
E.~Gobet, J.-P. Lemor, and X.~Warin.
\newblock A regression-based {M}onte {C}arlo method to solve backward
  stochastic differential equations.
\newblock {\em Ann. Appl. Probab.}, 15(3):2172--2202, 2005.

\bibitem[Kaz94]{Kazamaki1994}
N.~Kazamaki.
\newblock {\em Continuous exponential martingales and {BMO}}, volume 1579 of
  {\em Lecture Notes in Mathematics}.
\newblock Springer-Verlag, Berlin, 1994.

\bibitem[Kob00]{00Kob}
M.~Kobylanski.
\newblock Backward stochastic differential equations and partial differential
  equations with quadratic growth.
\newblock {\em Ann. Probab.}, 28(2):558--602, 2000.

\bibitem[Kun90]{Kunita1990}
H.~Kunita.
\newblock {\em Stochastic flows and stochastic differential equations},
  volume~24 of {\em Cambridge Studies in Advanced Mathematics}.
\newblock Cambridge University Press, Cambridge, 1990.

\bibitem[Nua95]{Nualart}
D.~Nualart.
\newblock {\em The {M}alliavin calculus and related topics}.
\newblock Probability and its Applications (New York). Springer-Verlag, New
  York, 1995.

\bibitem[Zha01]{phd-zhang}
J.~Zhang.
\newblock {\em Some fine properties of {BSDE}}.
\newblock PhD thesis, Purdue University, August 2001.

\bibitem[Zha04]{Zhang2004}
J.~Zhang.
\newblock A numerical scheme for {BSDE}s.
\newblock {\em Ann. Appl. Probab.}, 14(1):459--488, 2004.

\end{thebibliography}

\end{document}